\begin{document}

\theoremstyle{plain}
\newtheorem{thm}{Theorem}[section]
\newtheorem{lem}[thm]{Lemma}
\newtheorem{pro}[thm]{Proposition}
\newtheorem{cor}[thm]{Corollary}
\newtheorem{statement}[thm]{}

\theoremstyle{definition}
\newtheorem{que}[thm]{Question}
\newtheorem{rem}[thm]{Remark}
\newtheorem{defi}[thm]{Definition}
\newtheorem{Question}[thm]{Question}
\newtheorem{Conjecture}[thm]{Conjecture}
\newtheorem{exa}[thm]{Example}

\def\CO{{\mathcal{O}}}
\def\CR{{\mathcal{R}}}
\def\OD{{\mathcal{O}D}}
\def\OG{{\mathcal{O}G}}
\def\OGb{{\mathcal{O}Gb}}
\def\OH{{\mathcal{O}H}}
\def\ON{{\mathcal{O}N}}
\def\OP{{\mathcal{O}P}}
\def\OQ{{\mathcal{O}Q}}
\def\OR{{\mathcal{O}R}}

\def\Br{\mathrm{Br}} 
\def\codim{\mathrm{codim}}
\def\End{\mathrm{End}}
\def\Hom{\mathrm{Hom}}
\def\IBr{\mathrm{IBr}} 
\def\Ind{\mathrm{Ind}} 
\def\Irr{\mathrm{Irr}} 
\def\id{\mathrm{id}} 
\def\Ker{\mathrm{Ker}} 
\def\mod{\mathrm{mod}} 
\def\Res{\mathrm{Res}} 
\def\op{\mathrm{op}}
\def\rk{\mathrm{rk}} 
\def\Syl{\mathrm{Syl}} 
\def\Tr{\mathrm{Tr}} 

\def\ten{\otimes}
\def\tenO{\otimes_{\mathcal{O}}}
\def\tenK{\otimes_K}
\def\tenk{\otimes_k}

\title[Anchors]
{Anchors of irreducible characters}

\author[Kessar]{Radha Kessar}
\address{Department of Mathematics, City University London EC1V 0HB,  
United Kingdom}
\email{radha.kessar.1@city.ac.uk}

\author[K\"ulshammer]{Burkhard K\"ulshammer}
\address{Institut f\"ur Mathematik, Friedrich-Schiller-Universit\"at, 
07743 Jena, Germany}
\email{kuelshammer@uni-jena.de}

\author[Linckelmann]{Markus Linckelmann}
\address{Department of Mathematics, City University London EC1V 0HB,  
United Kingdom}
\email{markus.linckelmann@city.ac.uk}

\thanks{The second author gratefully acknowledges support by the DFG 
(SPP 1388).}

\begin{abstract}
Given a prime number $p$, every irreducible character $\chi$ of 
a finite group $G$ determines a unique conjugacy class of 
$p$-subgroups of $G$ which we will call the {\it anchors} of $\chi$.  
This invariant has been considered by Barker in the context of
finite $p$-solvable groups.
Besides proving some basic properties of these anchors, we 
investigate the relation to other $p$-groups which can be attached to 
irreducible characters, such as defect groups, vertices in the sense 
of J.~A.~Green and vertices in the sense of G.~Navarro.
\end{abstract}

\maketitle

\begin{center}
\textit{Dedicated to the memory of J.~A.~Green}
\end{center}

\bigskip

\begin{center}
\today
\end{center}

\section{Introduction}

Let $p$ be a prime number and $\CO$ a complete discrete valuation
ring with residue field $k=$ $\CO/J(\CO)$ of characteristic $p$ and
field of fractions $K$ of characteristic $0$.
For $G$ a finite group, we denote by $\Irr(G)$ the set of characters of 
the simple $KG$-modules. For $\chi \in \Irr(G)$, we denote by
$e_\chi$ the unique primitive idempotent in $Z(KG)$ satisfying
$\chi(e_\chi)\neq$ $0$. 
The $\CO$-order $\OG e_\chi$ in the simple $K$-algebra $KG e_\chi$ is 
a $G$-interior $\CO$-algebra, via the group homomorphism 
$G \to$ $(\OG e_\chi)^\times$ sending $g\in$ $G$ to $ge_\chi$.
Since $(\OG e_\chi)^G = Z(\OG e_\chi)$ is a subring of the field
$Z(KGe_\chi)$, it follows that $\OG e_\chi$ is a primitive $G$-interior 
$\CO$-algebra. In particular, $\OG e_\chi$ is a primitive $G$-algebra. 
By the fundamental work of J.~A.~Green \cite{Gr}, it has a defect 
group. This is used in work of Barker \cite{Bar} to prove a part of a 
conjecture of Robinson (cf. \cite[4.1, 5.1]{RobLoc}) for blocks of 
finite $p$-solvable groups.
In order to distinguish this invariant from defect groups of blocks 
and from vertices of modules, we introduce the following terminology.

\begin{defi} \label{defn:anchors} 
Let $G$ be a finite group and let $\chi \in \Irr(G) $.  An
{\it anchor of $\chi$} is a defect group of the primitive $G$-interior 
$\CO$-algebra $\OG e_\chi$.
\end{defi} 

By the definition of defect groups, an anchor of an irreducible 
character $\chi$ of $G$ is a subgroup $P$ of $G$ which is minimal 
with respect to $e_\chi \in$ $(\mathcal{O}G e_\chi)_P^G$, where 
$(\OG e_\chi)_P^G$ denotes the image of the  relative trace map
$\Tr_P^G: (\OG e_\chi)^P \to (\OG e_\chi)^G$.
Green's general theory in \cite[\S 5]{Gr} implies that the anchors
of $\chi$ form a conjugacy class of $p$-subgroups of $G$.

For the remainder of the paper we make the blanket assumption that
$K$ and $k$ are splitting fields for the finite groups arising in
the statements below. In a few places, this assumption is not
necessary; see the Remark \ref{nonsplitRemark} below.

\begin{thm} \label{anchors-omnibus1} 
Let $G$ be a finite group and let $\chi \in \Irr (G)$. Let $B$ be the  
block of $\CO G$ containing $\chi $ and let $L$ be an $\OG$-lattice 
affording $\chi$. Let $P$ be an anchor of $\chi$  and denote by  
$\Delta P$ the image $\{ (x, x) \,  |  \,   x \in P \}$ of $P$ 
under  the diagonal embedding of $G$ in $G \times G$. 
The following hold.

\begin{enumerate}[\rm (a)]

\item $P$ is contained in a defect group of $B$.

\item  $P$ contains a vertex of $L$.

\item  We have $O_p(G) \leq P$.    

\item  
The  suborder $\OP e_{\chi}$ of $\OG e_{\chi}$ is local, and  
$\OG e_{\chi}$ is a separable extension of $\OP e_{\chi}$.

\item $\Delta P $ is contained in a vertex of the 
$\CO(G \times G)$-module $ \OG e_{\chi} $  and  $P\times P$ 
contains a vertex of $ \OG e_{\chi} $. Moreover, $\Delta P$ is a 
vertex of $\OG e_{\chi} $ if and only if $\chi$ is of defect zero. 

\end{enumerate}
\end{thm}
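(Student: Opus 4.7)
The plan is to work throughout with the defining equation $e_\chi = \Tr_P^G(a)$ for some $a \in (\OG e_\chi)^P$, together with the surjective $G$-algebra homomorphism $\pi \colon \OGb \to \OG e_\chi$, $x \mapsto xe_\chi$ (well-defined because $e_\chi b = e_\chi$). For (a), since $(\OG e_\chi)^G \subseteq Z(KGe_\chi)$ is contained in a field, $e_\chi$ is its unique nonzero idempotent and hence primitive; the standard descent of defect groups under $G$-algebra surjections, applied to $\pi$ sending $b$ (defect group $D$) to $e_\chi$, gives $e_\chi \in (\OG e_\chi)^G_D$, so $P \leq_G D$ by minimality of $P$. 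For (b), the structural map $\rho \colon \OG e_\chi \to \End_\CO(L)$ is a $G$-algebra homomorphism with $\rho(e_\chi) = \id_L$, so $\id_L = \Tr_P^G(\rho(a))$ and Higman's criterion exhibits $L$ as a direct summand of $\Ind_P^G \Res_P^G L$. Part (c) reduces, since $O_p(G)$ is normal in $G$, to the assertion that every defect group of every primitive $G$-interior $\CO$-algebra contains $O_p(G)$; this is a standard Brauer-map argument applied to $e_\chi = \Tr_P^G(a)$, combining the Mackey-type vanishing of $\Br_Q \circ \Tr_P^G$ whenever $Q$ is not $G$-subconjugate to $P$ with the nonvanishing of $\Br_Q$ on the identity of the primitive $G$-interior algebra for $Q$ a normal $p$-subgroup.

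For (d), $\OP e_\chi$ is a quotient of the local ring $\OP$ via the structural map, hence local. For the separable extension, the element
\[
c = \sum_{g \in [G/P]} ga \otimes_{\OP e_\chi} g^{-1} e_\chi \;\in\; \OG e_\chi \otimes_{\OP e_\chi} \OG e_\chi
\]
is independent of the choice of coset representatives (using the $P$-invariance of $a$), is $(\OG e_\chi, \OG e_\chi)$-bimodule-central (by a direct substitution-of-representatives calculation), and satisfies $\mu(c) = \Tr_P^G(a) = e_\chi$ under the multiplication map $\mu$. Hence $x \mapsto xc = cx$ gives a bimodule splitting of $\mu$, which is the definition of a separable extension.

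For (e), the free right $\OP e_\chi$-module decomposition $\OG e_\chi = \bigoplus_{g \in [G/P]} g \cdot \OP e_\chi$ yields an identification $\OG e_\chi \otimes_{\OP e_\chi} \OG e_\chi \cong \Ind_{P \times P}^{G \times G}(\OP e_\chi)$ as $\CO(G\times G)$-modules, and the splitting from (d) then exhibits $\OG e_\chi$ as a $(P \times P)$-projective $\CO(G \times G)$-module, so its vertex is $(G\times G)$-subconjugate to $P \times P$. For $\Delta P$ contained in the vertex, note that $\Res_{\Delta G}^{G \times G} \OG e_\chi$ is the conjugation $\OG$-module on $\OG e_\chi$: the identity $1_{\OG e_\chi} = \Tr_P^G(a)$ translates (via multiplication by $a$) into a Higman decomposition of $\id$ in the endomorphism algebra, yielding at least one indecomposable summand of vertex exactly $\Delta P$, and Mackey along $\Delta G \leq G \times G$ then forces a $(G\times G)$-conjugate of $\Delta P$ into the vertex of $\OG e_\chi$. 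For the final equivalence: if $\chi$ has defect zero, then $e_\chi = b$, $P = 1$, and $\OG e_\chi = \OGb \cong M_{\chi(1)}(\CO)$ is projective as $\CO(G\times G)$-module with trivial vertex $1 = \Delta P$. Conversely, if $\Delta P$ is the vertex, then since $\Delta P^{(x,y)} \cap (G \times 1) = 1$ for every $(x,y) \in G \times G$, Mackey shows $\Res_{G \times 1}^{G \times G} \OG e_\chi$, i.e., $\OG e_\chi$ as a left $\OG$-module, is projective; its $K$-character is $\chi(1)\,\chi$, so $\chi(1)\,\chi$ is a nonnegative integer combination of characters of projective indecomposable $\OG$-modules. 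Evaluating on $p$-singular elements and dividing by $\chi(1)$ shows $\chi$ vanishes on all $p$-singular elements, whence $\chi$ is of defect zero by the classical theorem of Brauer.

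The main obstacle is the final equivalence of (e): while the forward direction is immediate, the converse requires correctly converting the $\CO(G\times G)$-module-theoretic condition ``vertex equals $\Delta P$'' into one-sided projectivity of $\OG e_\chi$ via Mackey along $G \times 1$, followed by the projective-character criterion for defect-zero characters. The Brauer-quotient bookkeeping in (c), and the argument that $\Delta P$ is always inside the vertex in (e), are similarly delicate and rely on careful functoriality of the Brauer construction and of vertices.
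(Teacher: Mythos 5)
Parts (a), (b) and (d) of your argument are correct and essentially identical to the paper's (Propositions \ref{anchorscontainvertices} and \ref{separable}): the descent of $1_B=\Tr^G_D(x)$ along $x\mapsto xe_\chi$, Higman's criterion via left multiplication on $L$, and the separability element $\sum_g ga\otimes g^{-1}e_\chi$ are exactly the steps used there. The forward/backward directions of the final equivalence in (e) also match the paper's proof. However, there are two genuine gaps.

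First, your proof of (c) rests on the assertion that \emph{every} defect group of \emph{every} primitive $G$-interior $\CO$-algebra contains $O_p(G)$, via the claim that $\Br_Q(1_A)\neq 0$ for $Q$ a normal $p$-subgroup. This is false: take $G=C_p$ and $A=\End_\CO(\OG)$ with $L=\OG$ the (indecomposable, projective) regular lattice. Then $A^G\cong\End_{\OG}(\OG)\cong\OG$ is local, so $A$ is a primitive interior $G$-algebra, but its defect group is the vertex of $L$, namely the trivial group, and $\Br_G(\id_L)=0$ since $\id_L\in\Tr^G_1(A)$. What saves the statement for $A=\OG e_\chi$ is that this algebra is a \emph{quotient} of $\OG$: for $u\in N=O_p(G)$ one has $u-1\in J(\ON)\subseteq J(\OG)$, hence $(u-1)e_\chi\in J(\OG e_\chi)$, so conjugation by $N$ is trivial modulo $J(\OG e_\chi)$; if $N\not\leq P$ then $\Tr^{PN}_P(d)\equiv |PN:P|d\equiv 0$ and applying $\Tr^G_{PN}$ forces $e_\chi\in J(\OG e_\chi)$, a contradiction. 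This is the paper's Proposition \ref{bigradical}; your version is missing precisely this use of the surjection $\OG\to\OG e_\chi$.

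Second, in (e) the claimed ``free right $\OP e_\chi$-module decomposition $\OG e_\chi=\bigoplus_{g\in[G/P]}g\cdot\OP e_\chi$'' is false, and with it the identification $\OG e_\chi\otimes_{\OP e_\chi}\OG e_\chi\cong\Ind^{G\times G}_{P\times P}(\OP e_\chi)$. A rank count already rules it out: for the defect zero character $\chi$ of $\mathfrak{S}_3$ at $p=2$ one has $P=1$, $\OP e_\chi=\CO e_\chi$ of rank $1$, and $|G:P|\cdot 1=6\neq 4=\chi(1)^2=\rk_\CO(\OG e_\chi)$. (In general $\rk_\CO(\OP e_\chi)=\sum_\psi\psi(1)^2$ over the distinct irreducible constituents $\psi$ of $\Res^G_P\chi$, which bears no fixed relation to $\chi(1)^2/|G:P|$; nor is $\OG e_\chi$ projective as a one-sided $\OP$-module in general.) So your deduction of $P\times P$-projectivity collapses. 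The paper's Proposition \ref{bimod} avoids this by constructing explicit $\CO(G\times G)$-module splittings showing that $\OG e_\chi$ is relatively $P\times G$-projective and relatively $G\times P$-projective separately, and then intersecting: a vertex $R_1\leq G\times P$ is conjugate to a vertex $R_2\leq P\times G$, whence $R_1\leq{}^xP\times P$ and $\OG e_\chi$ is relatively $P\times P$-projective. Your argument that $\Delta P$ lies in a vertex is terse but salvageable (it amounts to the paper's observation that $(\OG e_\chi)(\Delta P)\neq 0$ because $P$ carries a local point); you should make explicit why some summand of the conjugation module has vertex exactly $P$, which uses that $(\OG e_\chi)^G$ is local together with the minimality of $P$.
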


For $G$ a finite group, we denote by $\IBr(G)$ the set of
$\CO$-valued Brauer characters of the simple $kG$-modules.
We denote by $G^\circ$ the set of $p'$-elements in $G$, and for
$\chi$ a $K$-valued class function on $G$, we denote by
$\chi^\circ$ the restriction of $\chi$ to $G^\circ$.

\begin{thm} \label{anchors-omnibus2} 
Let $G$ be a finite group and $\chi \in \Irr (G)$. Let $B$ be the  
block of $\OG$ containing $\chi$ and let $L$ be an $\OG$-lattice 
affording $\chi$. Let $P$ be an anchor of $\chi$. The following hold.

\begin{enumerate}[\rm (a)]

\item 
If $\chi^\circ\in$ $\IBr(G)$, then $L$ is unique up to isomorphism, $P$ 
is a vertex of $L$, and $P\times P$ is a vertex of the
$\CO(G\times G)$-module $\OG e_\chi$.

\item \label{omni-mult} 
Let $\tau $  be  a local point of $P$ on $\CO Ge_{\chi} $. Then the 
multiplicity module of $\tau$ is simple. In particular,   
$O_p(N_G(P_{\tau})) =P$ and $P$ is centric in a fusion system of $B$.

\item  
If $B$ has an abelian defect group $D$, then $D$ is an anchor of $\chi$.

\item 
If $\chi$ has height zero, then $P$ is a defect group of $B$, and $P \times P$
is a vertex of the $\CO(G \times G)$-module $\CO Ge_{\chi}$.
\end{enumerate}
\end{thm}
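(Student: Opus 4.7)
For part (a), my starting point is that the hypothesis $\chi^\circ\in\IBr(G)$ forces the $\chi$-row of the decomposition matrix to contain a single nonzero entry (equal to $1$), so for any $\OG$-lattice $L$ affording $\chi$ the reduction $\bar L=L\tenO k$ is the simple $kG$-module $S$ with Brauer character $\chi^\circ$. Uniqueness of $L$ follows by a Nakayama argument applied to the fractional $\CO$-ideal $\Hom_{\OG}(L,L')\subseteq\End_{KG}(V)=K$, where $V$ denotes the simple $KG$-module with character $\chi$: a generator of this ideal reduces to a nonzero, hence iso, $kG$-map $\bar L\to\bar L'$, which lifts to an isomorphism $L\cong L'$. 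The crucial observation for the remaining claims is that the natural map $\OG\to\End_\CO(L)$ is surjective: by Jacobson density, $\bar L$ simple gives that $kG\to\End_k(\bar L)$ is surjective, which is precisely the reduction modulo $J(\CO)$ of $\OG\to\End_\CO(L)$, and Nakayama's lemma then gives surjectivity over $\CO$. Since this factors as $\OG\twoheadrightarrow\OG e_\chi\hookrightarrow\End_\CO(L)$, we conclude $\OG e_\chi=\End_\CO(L)$ as $G$-interior $\CO$-algebras. Defect groups of isomorphic interior $G$-algebras agree, so the anchor $P$ is a vertex of $L$; moreover $\End_\CO(L)\cong L\tenO L^*$ has $P\times P$ as vertex as a $\CO(G\times G)$-lattice, giving the last assertion.

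Part (b) uses Puig's theorem: since $P$ is a defect group of the primitive interior $G$-algebra $\OG e_\chi$, the local pointed group $P_\tau$ is maximal, and the multiplicity module $V(\tau)$ of a maximal local pointed group on a primitive interior $\CO$-algebra is simple as a $k\bar N_G(P_\tau)$-module, where $\bar N_G(P_\tau)=N_G(P_\tau)/P$. Simplicity and faithfulness of $V(\tau)$ force $O_p(\bar N_G(P_\tau))=1$, hence $O_p(N_G(P_\tau))=P$. Centricity of $P$ in a fusion system of $B$ then follows by lifting $P_\tau$ to a local pointed group on $B$ via Theorem~\ref{anchors-omnibus1}(a) and translating the $O_p$-condition into Alperin's definition of $\mathcal F$-centricity. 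Part (c) then follows by combining (d) with the Kessar--Malle theorem asserting that every irreducible character in a block with abelian defect group has height zero.

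For part (d), the key tool is the matrix trace on the simple $K$-algebra $KGe_\chi\cong M_{\chi(1)}(K)$. By definition of an anchor, $e_\chi=\Tr_P^G(a)$ for some $a\in(\OG e_\chi)^P$. Fix any $\OG$-lattice $L$ affording $\chi$ and use the embedding $\OG e_\chi\hookrightarrow\End_\CO(L)\cong M_{\chi(1)}(\CO)$; then for the matrix trace $\tau$ we have $\tau(e_\chi)=\chi(1)$ and $\tau(\Tr_P^G(a))=[G:P]\,\tau(a)$ with $\tau(a)\in\CO$, so $[G:P]\mid\chi(1)$ in $\CO$, giving $\nu_p([G:P])\leq\nu_p(\chi(1))$. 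Height zero of $\chi$ gives $\nu_p(\chi(1))=\nu_p([G:D])$, and together with $P\leq D$ from Theorem~\ref{anchors-omnibus1}(a) this forces $|P|=|D|$, hence $P=D$. For the $\CO(G\times G)$-vertex statement, repeat the trace argument with $G$ replaced by $G\times G$: if $H$ is a vertex of $\OG e_\chi$ as a $\CO(G\times G)$-module, then Higman's criterion together with the matrix trace on the simple $K(G\times G)$-module $KGe_\chi$ (of dimension $\chi(1)^2$) yields $[G\times G:H]\mid\chi(1)^2$, so $\nu_p(|H|)\geq 2\nu_p(|D|)=\nu_p(|P\times P|)$; combined with Theorem~\ref{anchors-omnibus1}(e) this forces $H=P\times P$.

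The most delicate point I anticipate is the hypothesis check in part (b), verifying that Puig's simplicity theorem for multiplicity modules of maximal local pointed groups applies to $\OG e_\chi$ as a primitive interior $G$-algebra (rather than a block), together with the subsequent translation of $O_p(N_G(P_\tau))=P$ into $\mathcal F$-centricity of $P$ in the fusion system of the ambient block $B$. The arguments in (a), (c), (d) reduce to fairly standard manipulations once the identifications above are in place.
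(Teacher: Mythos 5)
Your parts (a) and (d) are sound. In (a) you essentially re-derive the fact, which the paper quotes from the literature, that $\chi^\circ\in\IBr(G)$ forces $\OG e_\chi\cong\End_\CO(L)\cong\CO^{\chi(1)\times\chi(1)}$ (via simplicity of $k\tenO L$, Jacobson density and Nakayama); from there your argument coincides with the paper's. In (d) you take a genuinely different and arguably cleaner route: the paper deduces $P=D$ from Green's theorem on the $p$-part of the rank of an indecomposable lattice with given vertex, whereas you apply the ordinary matrix trace to $e_\chi=\Tr^G_P(a)$ inside $\End_\CO(L)$ to get $[G:P]\mid\chi(1)$ in $\CO$ directly; both work. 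Your (c) is logically correct but invokes the Kessar--Malle half of Brauer's height zero conjecture (hence the classification of finite simple groups), where the paper's (c) is an elementary consequence of (b): a centric subgroup of an abelian defect group must be the whole defect group.

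The genuine gap is in (b). The ``theorem'' you invoke --- that the multiplicity module of a maximal local pointed group on \emph{any} primitive interior $G$-algebra is simple --- is false. Take $A=\End_\CO(M)$ for $M$ an indecomposable projective $\OG$-lattice whose reduction is not simple (e.g.\ the projective cover of the trivial module for $\mathfrak{S}_3$ at $p=3$): this is a primitive interior $G$-algebra, its unique maximal local pointed group is $1_\gamma$, and its multiplicity module is $k\tenO M$, which is indecomposable projective but not simple. What the general theory gives for a defect pointed group is only \emph{projectivity} of the multiplicity module; simplicity for $\OG e_\chi$ is a special feature coming from $(\OG e_\chi)^G=\CO e_\chi\cong\CO$ together with the self-duality of $\OG e_\chi$, and this is exactly what the paper extracts from Th\'evenaz's duality results (and Picaronny--Puig). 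Your subsequent deduction of $O_p(N_G(P_\tau))=P$ from ``simplicity and faithfulness'' is also unsupported: faithfulness of $V_\tau$ is not established and is not the right hypothesis; since $O_p(\bar N)$ annihilates every simple module of a central $p'$-extension of $\bar N$, simplicity alone tells you nothing about $O_p$. The correct argument uses that $V_\tau$ is simple \emph{and} projective, hence lies in a defect-zero block of a central $p'$-extension $N'$ of $\bar N$, forcing $O_p(N')=1$ and so $O_p(\bar N)=1$. Finally, $O_p(N_G(P_\tau))=P$ is a radical-type condition and does not translate into centricity in a fusion system of $B$; the paper obtains centricity from Kn\"orr's theorem that vertices of lattices affording $\chi$ are centric, the upward closure of centric subgroups, and the fact that the anchor contains such a vertex. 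As written, part (b) of your argument does not go through.
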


The hypothesis $\chi^\circ\in$ $\IBr(G)$ in the first statement of
Theorem \ref{anchors-omnibus2} holds if $\chi$ is a height zero
character of a nilpotent block. If $G$ is $p$-solvable, then
by the Fong-Swan theorem \cite[\S 22]{CR}, for any $\varphi\in$ 
$\IBr(G)$ there is $\chi\in$ $\Irr(G)$ such that $\chi^\circ=$ 
$\varphi$. The fact that anchors are centric is essentially proved in 
the proof of \cite[Theorem]{Bar} as an immediate consequence of results 
of Kn\"orr \cite{K}, Picaronny-Puig \cite{PiPu}, and Th\'evenaz 
\cite{T-DG}; see the proof of \ref{multpro} below for details.

The next result shows that anchors are invariant under Morita 
equivalences given by a bimodule with endopermutation source, hence in 
particular under source algebra equivalences.

\begin{thm} \label{omni-source} 
Let $G$, $G'$ be finite groups. Let $B$, $B'$ be blocks of 
$\OG$, $\OG'$, with defect groups $D$, $D'$, respectively.
Suppose that $B$ and $B'$ are Morita equivalent via a 
$B$-$B'$-bimodule $M$ which has an endopermutation source, when 
viewed as an $\CO(G\times G')$-module. 
Let $\chi\in$ $\Irr(B)$ and $\chi'\in$ $\Irr(B')$ such that
$\chi$ and $\chi'$ correspond to each other under the Morita
equivalence determined by $M$. Then there is an isomorphism
$D\cong$ $D'$ sending an anchor of $\chi$ to an anchor of $\chi'$.
In particular, if $B$ and $B'$ are source algebra equivalent,
then $\chi$ and $\chi'$ have isomorphic anchors.
\end{thm}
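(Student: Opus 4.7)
The plan is to extract from the Morita bimodule $M$ a bimodule that links the primitive interior algebras $\OG e_\chi$ and $\OG' e_{\chi'}$, and then to transport defect groups through it, exploiting the endopermutation source hypothesis.

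First I would introduce the cut-down bimodule $N := e_\chi M e_{\chi'}$, regarded as the image of $M$ in $(M\tenO K) e_{\chi'}$. Because $\chi$ and $\chi'$ correspond under the Morita equivalence, cutting $M$ on the right by $e_{\chi'}$ automatically forces $e_\chi$ to act as the identity on the left, so that $N = M e_{\chi'} = e_\chi M$, and $N$ is an $(\OG e_\chi)$-$(\OG' e_{\chi'})$-bimodule. As a direct summand of $M$, viewed as an $\CO(G\times G')$-module, the bimodule $N$ inherits the endopermutation source property, and the cutting construction inherits from $M$ the Morita-type relation between the two primitive interior $\CO$-algebras.

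Next I would extract from $N$ an isomorphism $\varphi$ from an anchor $P$ of $\chi$ to an anchor $P'$ of $\chi'$. The standard structure theory for Morita bimodules between blocks with endopermutation source, originating in work of Puig, forces the indecomposable $\CO(G\times G')$-summands of $N$ with maximal vertex to have a twisted-diagonal vertex of the form $\Delta_\varphi = \{(u,\varphi(u)) : u\in Q\}$ for some injective group homomorphism $\varphi$ from a $p$-subgroup $Q \leq G$ to $G'$. Combined with Theorem \ref{anchors-omnibus1}(e), which squeezes the vertex of $\OG e_\chi$ (and of $\OG' e_{\chi'}$) between $\Delta P$ and $P\times P$ (and between $\Delta P'$ and $P'\times P'$), the symmetry of the Morita equivalence, realised by the inverse bimodule $N^\vee$ whose source is the dual endopermutation module, forces $Q = P$ and $\varphi(P) = P'$, producing the desired isomorphism of anchors.

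Finally, applying the same vertex analysis to $M$ itself rather than to $N$ yields a twisted-diagonal vertex $\Delta_\psi = \{(u,\psi(u)) : u\in D\}$, where $\psi : D \to D'$ is an isomorphism of the defect groups of $B$ and $B'$; routine compatibility between $\psi$ and $\varphi$ under the inclusions $P\leq D$ and $P'\leq D'$ yields the required isomorphism $D \cong D'$ sending $P$ to $P'$. The ``in particular'' clause is immediate, since a source algebra equivalence is by definition a Morita equivalence via a bimodule with trivial, hence endopermutation, source. The main obstacle lies in the minimality assertion in the preceding step: proving that $P$ itself, and not merely a $p$-subgroup containing it, matches an anchor of $\chi'$ under $\varphi$. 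This forces one to use the definition of anchors as minimal trace-supporting subgroups symmetrically on both sides of the equivalence, where the existence of $N^\vee$ with dual endopermutation source is essential.
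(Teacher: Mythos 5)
There is a genuine gap at both of the central steps of your plan. First, the cut bimodule $N=e_\chi Me_{\chi'}$ is an $\CO$-free \emph{quotient} of $M$ (the image of $M$ under $m\mapsto e_\chi m$ inside $K\tenO M$), not a direct summand of $M$ as an $\CO(G\times G')$-module -- exactly as $\OG e_\chi$ is a quotient, not a summand, of $\OG$. Quotients inherit neither relative projectivity nor vertices nor sources, so $N$ does not inherit the endopermutation-source property from $M$; moreover, Puig's structure theory producing twisted-diagonal vertices concerns Morita bimodules between block algebras (symmetric orders, $p$-permutation-flavoured summands of $\CO(G\times G')$-induced modules) and does not apply to a bimodule between the non-symmetric quotient orders $\OG e_\chi$ and $\OG' e_{\chi'}$. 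Second, even granting information about vertices of $N$ or of $\OG e_\chi$, such vertices do not determine the anchor: Theorem \ref{anchors-omnibus1}(e) only sandwiches a vertex of $\OG e_\chi$ between $\Delta P$ and $P\times P$, and if a vertex were twisted diagonal, the two containments would force it to be conjugate to $\Delta P$, which by the ``Moreover'' part of that statement happens precisely when $\chi$ has defect zero. So the key assertion that ``the symmetry of the Morita equivalence forces $Q=P$ and $\varphi(P)=P'$'' is unsupported, and in general false as a reading of vertex data; the concluding ``routine compatibility'' with the isomorphism $\psi\colon D\to D'$ therefore has nothing to rest on.

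For comparison, the paper does not use vertices of bimodules at all. It first proves Theorem \ref{sourcealgebra}: an anchor of $\chi$ is a subgroup $Q\leq D$ of maximal order with $(Ae_\chi)(Q)\neq\{0\}$, where $A=iBi$ is a source algebra and $(-)(Q)$ denotes the Brauer quotient. It then invokes Puig's source-algebra form of an endopermutation-source Morita equivalence, $A'\cong e(S\tenO A)e$ with $S=\End_\CO(V)$ for an indecomposable endopermutation $\OD$-module $V$ with vertex $D$ (this is also where the identification $D=D'$ comes from), notes $i'e_{\chi'}=e\cdot(1_S\ten ie_\chi)$, and uses the multiplicativity $(S\tenO Ae_\chi)(Q)=S(Q)\tenk(Ae_\chi)(Q)$ for algebras with a $D$-stable basis to transfer nonvanishing of Brauer quotients from $Ae_\chi$ to $A'e_{\chi'}$, hence to match anchors. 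To salvage your approach you would need a replacement for Theorem \ref{sourcealgebra}, i.e.\ an invariant that detects anchors and that your cut bimodule demonstrably preserves; vertices of $N$ are not such an invariant.
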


The last statement in Theorem \ref{omni-source} can be made more
precise: if $B$, $B'$ are source algebra equivalent, then the
isomorphism $D\cong$ $D'$ can be chosen to have an extension to
a source algebra isomorphism; see Theorem \ref{sourcealgebra} below.

In \cite{N}  Navarro associated, via the theory of special characters,  
to each ordinary irreducible character $\chi$  of a $p$-solvable group 
$G$, a $G$-conjugacy class of pairs $(Q, \delta)$, where $Q$ is a  
$p$-subgroup of $G$ and $\delta $ is an ordinary irreducible character 
of $Q$,  which behave in certain ways as the  Green vertices of 
indecomposable modules (see also \cite{Ea},\cite{C} and \cite{CL}). 
We call such a pair $(Q, \chi)$  a {\it Navarro vertex} of $\chi$. We 
prove the following two results relating Navarro vertices and anchors 
(see Section \ref{normalSection} for the definitions).

\begin{thm} \label{contain}    
Let $G$ be a finite $p$-solvable group and let $\chi \in \Irr(G)$ such
that $\chi^\circ\in$ $\IBr(G)$. Let $(Q,\delta)$ be a  
Navarro vertex of $\chi$. Then $Q$ contains an anchor of $\chi$. 
Further, if $p$ is odd or $\delta$ is the trivial character, then 
$Q$ is an anchor of $\chi$.
\end{thm}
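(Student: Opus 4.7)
The plan is to identify the anchor as a Green vertex of a simple $kG$-module, and then to compare Green vertices with Navarro vertices by induction along the chain of normal subgroups defining $(Q,\delta)$. Since $\chi^\circ \in \IBr(G)$, the $\OG$-lattice $L$ affording $\chi$ is the Fong-Swan lift of the simple $kG$-module $S$ with Brauer character $\chi^\circ$, and in particular $k \tenO L \cong S$. Theorem~\ref{anchors-omnibus2}(a) then gives that the anchor $P$ is a Green vertex of $L$, hence of $S$. The theorem thus reduces, in the $p$-solvable case, to the known comparison between the Green vertex of $S$ and the Navarro vertex $Q$ of its Fong-Swan lift $\chi$.

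For this comparison I would induct on $|G|$, mirroring the recursive definition of $(Q,\delta)$. Navarro's procedure refines $(G,\chi)$ to a pair $(H,\psi)$ with $H < G$ via Clifford correspondence either with respect to a normal $p'$-subgroup or, using Isaacs's special characters, with respect to a normal $p$-subgroup, eventually terminating at the $p$-group $Q$ carrying $\delta$. In a $p'$-reduction step, the block of $H$ containing $\psi$ is Fong-Reynolds-equivalent — in fact source algebra equivalent via a trivial-source bimodule — to the block of $G$ containing $\chi$, so Theorem~\ref{omni-source} transports an anchor of $\psi$ to an anchor of $\chi$. In a $p$-reduction step through a normal $p$-subgroup $N \leq O_p(G)$, Theorem~\ref{anchors-omnibus1}(c) gives $N \leq P$, so the anchor of $\chi$ descends modulo $N$; Navarro's construction then matches this descent with an anchor of $\psi$. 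By induction, an anchor of $\psi$ is contained in the Navarro vertex of $\psi$, and this containment propagates upward to $P \leq Q$ up to $G$-conjugacy.

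For the equality statement, the possible gap between $P$ and $Q$ arises entirely from a sign twist that Navarro introduces in the presence of a $2$-element acting on a $2$-group, in order to make the vertex well-defined. When $p$ is odd, $B_\pi$-theory yields canonical lifts at every stage of the induction, so the Green and Navarro vertices match step by step. When $p=2$ and $\delta = 1$, this sign twist is forced to be trivial, so the same canonical identification goes through. The main obstacle is the $p$-reduction step: one must carefully track how the anchor of $\psi$ relates to that of $\chi$ under the Dade/Isaacs correspondence. Concretely, using Theorem~\ref{anchors-omnibus2}(b) one analyses the multiplicity module of $P$ on $\OG e_\chi$ and its counterpart on $\OH e_\psi$, and verifies that the two correspond under the descent modulo $N$. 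It is precisely here that, for $p=2$ and nontrivial $\delta$, the sign phenomenon can make $Q$ strictly larger than a Green vertex, which is why equality cannot be claimed in that case.
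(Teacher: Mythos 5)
Your opening reduction does not work. Theorem \ref{anchors-omnibus2}(a) does give that the anchor $P$ is a vertex of the unique lattice $L$ affording $\chi$, but it does \emph{not} follow that $P$ is a vertex of $S=k\tenO L$: reduction modulo $p$ can strictly shrink vertices, and the paper's own Example \ref{biglift} is a $p$-solvable counterexample with $\chi^\circ\in\IBr(G)$ (take $G=GL(2,3)$, $p=2$: the lattice affording the $2$-special character $\chi$ has a full Sylow $2$-subgroup as vertex, while $k\tenO L$ has the quaternion group of order $8$ as vertex). Since in general a vertex of $S$ is only \emph{contained} in $P$, even a complete proof that a vertex of $S$ lies in $Q$ would not yield $P\leq Q$; your reduction points the inequality the wrong way, so the containment statement — which must hold for $p=2$ as well — is not established. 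Beyond this, the inductive scheme along Navarro's reduction steps is only asserted: the compatibility of anchors with the Clifford/Isaacs correspondences, in particular in the $p$-step where $\chi$ lies over a nontrivial character of a normal $p$-subgroup (a situation covered neither by Proposition \ref{inflated} nor by Theorem \ref{omni-source} without further argument), and the "sign twist''/canonical-lift discussion for the equality part, remain heuristics rather than proofs.

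The paper's argument is much more direct and stays with lattices rather than simple modules. It takes a nucleus $(W,\gamma)$ with $\chi=\Ind^G_W(\gamma)$, $Q\in\Syl_p(W)$, and the factorisation $\gamma=\alpha\beta$ with $\alpha$ $p'$-special and $\beta$ $p$-special; since $\chi^\circ\in\IBr(G)$, also $\gamma^\circ,\alpha^\circ,\beta^\circ\in\IBr(W)$. A vertex $R$ of the unique $\CO W$-lattice affording $\gamma$ is then a vertex of the induced lattice $L$, hence an anchor of $\chi$ by Theorem \ref{anchors-omnibus2}(a), and $R\leq Q$ because $Q$ is a Sylow $p$-subgroup of $W$ — that is the containment. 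For the equality it uses that in a $p$-solvable group the vertex $R_0$ of the simple $kW$-module with Brauer character $\gamma^\circ$ has order $|W|_p/\beta(1)_p$, together with Navarro's lemma from \cite{N11} that a $p$-special character with irreducible restriction to $p$-regular elements is linear when $p$ is odd (and $\beta$ is trivially linear when $\delta=\Res^W_Q(\beta)=1_Q$), which forces $R_0=R=Q$. If you want to salvage your approach, you would need to argue with vertices of the lattice $L$ (not of $k\tenO L$) throughout, and supply genuine correspondence-compatibility lemmas for each reduction step; the nucleus-based argument bypasses both difficulties.
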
  

\begin{thm} \label{iscontained} 
Let $G$ be a finite group of odd order, and let $\chi \in$ $\Irr(G)$ 
have Navarro vertex $(Q, \delta)$. Then $Q$ is contained in an anchor 
of $G$. 
\end{thm}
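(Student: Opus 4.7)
Since $G$ has odd order, the Feit--Thompson theorem makes $G$ solvable, hence $p$-solvable. If $p=2$ then $p\nmid|G|$ and both $Q$ and any anchor of $\chi$ are trivial, so I assume throughout that $p$ is odd. The plan is induction on $|G|$, building on the iterative Clifford-theoretic construction that produces the Navarro vertex. As the base case, if $\chi^\circ\in\IBr(G)$, then Theorem~\ref{contain} (with $p$ odd) already gives the stronger conclusion that $Q$ is itself an anchor of $\chi$.

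For the inductive step, assume $\chi^\circ\notin\IBr(G)$. Using $p$-solvability, I would locate a proper normal subgroup $N$ of $G$ (typically $N=O_{p'}(G)$ when it is nontrivial, else $N=O_p(G)$) and an irreducible constituent $\theta$ of $\chi|_N$ whose inertia group $T=I_G(\theta)$ is a proper subgroup of $G$. The Clifford correspondence then provides a unique $\psi\in\Irr(T\mid\theta)$ with $\chi=\Ind_T^G(\psi)$, and the recursive nature of the Navarro-vertex construction guarantees that $(Q,\delta)$ is also a Navarro vertex of $\psi$ inside $T$. Since $T$ has odd order and $|T|<|G|$, the induction hypothesis applied to $\psi$ in $T$ yields an anchor $P'$ of $\psi$ containing $Q$. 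The residual situation is that every such $\theta$ is $G$-invariant; here I would use that $(|N|,|G/N|)=1$ to extend $\theta$ to some $\hat\theta\in\Irr(G)$, decompose $\chi=\hat\theta\cdot\eta$ via Gallagher, and recurse on $G/N$ (when $N\neq 1$), or, if $O_{p'}(G)=1$, replace $N$ by $O_p(G)$ and invoke Theorem~\ref{anchors-omnibus1}(c) to pass to $G/O_p(G)$.

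The main obstacle is the behavior of anchors under Clifford induction from $T$ to $G$: if $\chi=\Ind_T^G(\psi)$ and $P'$ is an anchor of $\psi$ in $T$, then $P'$ must be contained in (in fact, under these hypotheses, $G$-conjugate to) an anchor of $\chi$ in $G$. The natural route is to realize $\OG e_\chi$ as an induced interior $G$-algebra from $\mathcal{O}T e_\psi$, or at least as Morita equivalent to one in a defect-preserving way, and to invoke Green's theory of defect groups of induced $G$-algebras \cite{Gr} together with Th\'evenaz's refinements in \cite[\S 16]{T-DG}. A parallel, simpler step, needed in the residual case above, is that tensoring with an extension of a $G$-invariant $p'$-character preserves anchors; this should follow from the fact that such an extension has trivial anchor, combined with the tensor behavior of defect groups of primitive interior algebras. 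Once both transfer statements are in place, the Navarro-vertex iteration terminates at a character to which Theorem~\ref{contain} applies, giving the desired containment.
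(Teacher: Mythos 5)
Your reduction scheme has a genuine gap at exactly the step you flag as the ``main obstacle'': the transfer of anchors under Clifford induction. You need that if $\chi=\Ind_T^G(\psi)$ via the Clifford correspondence and $P'$ is an anchor of $\psi$, then $P'$ is contained in an anchor of $\chi$; the natural route you suggest, realising $\OG e_\chi$ as $\Ind_T^G(\mathcal{O}Te_\psi)$, only works over $K$ (where both sides are $\End$ of the induced simple module). Over $\CO$ the obstruction is whether $e_\psi e_\chi$ lies in the quotient order $\OG e_\chi$, and this is precisely why Proposition \ref{GN2pro} of the paper carries the strong hypothesis that the modular composition series of the $G$-conjugates of a lattice affording the inducing character be pairwise disjoint -- a hypothesis that fails in a general Clifford-correspondence situation. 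No such transfer statement is proved (or even claimed) in the paper; indeed the behaviour of anchors under Clifford theory is listed there as an open question. Two further points of your outline are also not secured: the claim that $(Q,\delta)$ remains a Navarro vertex of the Clifford correspondent $\psi$ depends on your reduction subgroup $T$ matching the inertia groups occurring in Navarro's nucleus construction, which your choice ($O_{p'}(G)$ or $O_p(G)$ constituents) does not guarantee; and the coprime-extension step is unjustified, since $|O_{p'}(G)|$ and $|G/O_{p'}(G)|$ need not be coprime (e.g.\ $G=C_3\times(C_7\rtimes C_3)$ with $p=7$), so Gallagher plus coprime extension is not available as stated.

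The paper's proof avoids induction on $|G|$ entirely and is worth comparing with your plan. It takes the nucleus $(W,\gamma)$ with $Q\in\Syl_p(W)$ and the factorisation $\gamma=\alpha\beta$ into $p'$-special and $p$-special parts, uses the odd-order hypothesis through Lemma \ref{odddecomposition} (the trivial Brauer character is a constituent of $\beta^\circ$) to apply the Plesken-type Lemma \ref{ples-special}, producing an $\CO W$-lattice $X$ affording $\beta$ with vertex $Q$; tensoring with a lattice $Y$ affording $\alpha$ (of $p'$-degree, so $Y^*\otimes Y$ has a trivial summand) gives a lattice $V$ affording $\gamma$ with vertex $Q$, and a Mackey argument shows $\Ind_W^G(V)$ affords $\chi$ and still has vertex $Q$. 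Then Proposition \ref{anchorscontainvertices}(ii) (anchors contain vertices of lattices affording $\chi$) finishes the proof. In other words, the containment is obtained by exhibiting one concrete lattice with vertex exactly $Q$, not by transporting anchors along a chain of Clifford reductions; your base case via Theorem \ref{contain} is fine, but the inductive machinery rests on transfer statements that are not established and, in the anchor-induction case, are genuinely open.
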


We give examples which show that equality does not always hold in the 
above theorem. We also give examples which show that if $|G|$ is 
even, then a Navarro vertex need not be contained in any anchor of 
$\chi $. 

Section \ref{OrderSection} of the paper contains  some basic
properties of quotient orders of finite group algebras. In 
Section \ref{AnchorSection}, we prove the theorems~\ref{anchors-omnibus1}
and \ref{anchors-omnibus2}. In section \ref{sourceSection} we
characterise anchors at the source algebra level, and use this
to prove Theorem \ref{omni-source}. 
In Section \ref{normalSection} we prove some properties of anchors  
with respect to normal subgroups.
Section \ref{NavarroSection} contains the proofs of   
Theorems ~\ref{contain} and \ref{iscontained}.     
In Section \ref{LiftingSection} we compare anchors of $\OG e_\chi$
to the defect groups of $k\tenO \OG e_\chi$.

\begin{rem} \label{nonsplitRemark}
The splitting field hypothesis on $K$ and $k$ is not needed in
Theorem \ref{anchors-omnibus1} and the Propositions
\ref{anchorscontainvertices}, \ref{bigradical}, \ref{separable},
and \ref{bimod}, on which the proof of Theorem \ref{anchors-omnibus1}
is based. This hypothesis is also not needed in Theorem
\ref{sourcealgebra}, stating that anchors can be read off the
source algebras of a block.
\end{rem}

\begin{rem}
Let $G$ be a finite group, $\chi\in$ $\Irr(G)$, and $b$ the block
idempotent of the block of $\OG$ to which $\chi$ belongs; that is,
$b$ is the primitive idempotent in $Z(\OG)$ satisfying $be_\chi=$ 
$e_\chi$. Let $P$ be an anchor of $\chi$; that is, $P$ is a minimal 
subgroup of $G$ such that there exists an element $c\in$ 
$(\OG e_\chi)^P$ satisfying $e_\chi=$ $\Tr^G_P(c)$. We clearly have 
$(\OG)^Pe_\chi\subseteq$ $(\OG e_\chi)^P$, but this inclusion need not
be an equality, and this is one of the main issues for calculating
anchors. The spaces $(\OG)^Pe_\chi$ and $(\OG e_\chi)^P$ have the same 
$\CO$-rank, since $(KG)^Pe_\chi=$ $(KG e_\chi)^P$. An equality
$(\OG)^Pe_\chi=$ $(\OG e_\chi)^P$ implies that $P$ is a defect
group of the block of $\OG$ to which $\chi$ belongs; see
Proposition \ref{Pfixedpoints} below.
\end{rem}

\section{Orders of characters} \label{OrderSection}

Let $G$ be a finite group and $\chi \in \Irr(G)$. 
Then $\OG e_\chi$ is an $\CO$-order in the simple $K$-algebra 
$KGe_\chi$, called the $\CO$-\emph{order} of $\chi$. 
In general, $\OG e_\chi$ is not a subalgebra of $\OG$, but it is
an $\CO$-free quotient of $\OG$. The map $f: \OG \to \OG e_\chi$ 
sending $x\in$ $\OG$ to $xe_\chi$ is an epimorphism of $\CO$-orders; 
in particular, the $\CO$-orders $\OG e_\chi$ and $\OG/\Ker(f)$ are 
isomorphic. Since we assume that $K$ and $k$ are splitting fields
for all finite groups arising in this paper, we have
$e_\chi =$ $\frac{\chi(1)}{|G|} \sum_{g \in G} \chi(g^{-1})g$,
and as a $K$-algebra, $KGe_\chi$ is isomorphic to the matrix algebra
$K^{\chi(1) \times \chi(1)}$. In particular, $KGe_\chi$ has
up to isomorphism a unique simple left module $M_\chi$, and
we have $\dim_K(M_\chi) = \chi(1)$. Since the $\CO$-rank of
$\OG e_\chi$ is $\chi(1)^2$, it follows that
$k \tenO \OG e_\chi$ is a $k$-algebra of dimension $\chi(1)^2$ 
whose isomorphism classes of simple modules are in bijection with 
the set
$$\IBr(G|\chi) = \{ \varphi \in \IBr(G): d_{\chi,\varphi} \neq 0 \}$$
where $d_{\chi,\varphi}$ denotes the decomposition number attached to 
$\chi$ and $\varphi$, defined by the equation $\chi^\circ=$
$\sum_{\varphi\in\IBr(G)}\ d_{\chi,\varphi} \varphi$. 
It is well-known that $d_{\chi,\varphi}=$ 
$\chi(i)$, where $i$ is a primitive idempotent in $\OG$ such that 
$\OG i$ is a projective cover of a simple module with Brauer character 
$\varphi$. In particular, $d_{\chi,\varphi}\neq$ $0$ if and only
if $ie_\chi\neq$ $0$. The simple $k \tenO \OG e_\chi$-module 
$N_\varphi$ corresponding to $\varphi \in \IBr(G|\chi)$ has dimension 
$\varphi(1)$. Thus we have an isomorphism of $k$-algebras
$$k \tenO \OG e_\chi / J(k \tenO \OG e_\chi) \cong 
\prod_{\varphi \in \IBr(G|\chi)} k^{\varphi(1) \times \varphi(1)}.$$
Let $P(N_\varphi)$ denote an $\OG e_\chi$-lattice which is a 
projective cover of $N_\varphi$. Then the $KGe_\chi$-module 
$K\tenO P(N_\varphi)$ is isomorphic to $M^{d_{\chi,\varphi}}$; in 
particular, we have  $\rk_\CO(P(N_\varphi)) = d_{\chi,\varphi} \chi(1)$.
Setting $\ell_\chi = |\IBr(G|\chi)|$, 
the decomposition matrix of the $\CO$-order $\OG e_\chi$ is the 
$1 \times \ell_\chi$-matrix
$$\Delta_\chi = (d_{\chi,\varphi}: \varphi \in \IBr(G|\chi))\ .$$
Hence the Cartan matrix of $\OG e_\chi$ is the 
$\ell_\chi \times \ell_\chi$-matrix
$$C_\chi = \Delta_\chi^\top \Delta_\chi = 
(d_{\chi,\varphi}d_{\chi,\psi}: \varphi,\psi \in \IBr(G|\chi)).$$
By definition, $C_\chi$ has rank 1. The only non-zero invariant factor 
of $C_\chi$ is
$$\gcd (d_{\chi,\varphi}d_{\chi,\psi}: \varphi,\psi \in \IBr(G|\chi)) = 
\gcd(d_{\chi,\varphi}: \varphi\in \IBr(G|\chi))^2.$$
If $\chi^\circ \in \IBr(G)$, then it is well-known  that the 
$\CO$-order $\OG e_\chi$ is isomorphic to 
$\CO^{\chi(1) \times \chi(1)}$ (see e. g.  \cite[Prop. 4.1]{KKL}), 
and thus the $k$-algebra $k \tenO \OG e_\chi$ is isomorphic to 
$k^{\chi(1) \times \chi(1)}$. In this case the decomposition matrix 
$\Delta_\chi$ is the $1 \times 1$-matrix $(1)$, and so is the Cartan 
matrix $C_\chi$. 

Since $Z(KGe_\chi) = Ke_\chi \cong K$, we have $Z(\OG e_\chi) =$
$\CO e_\chi \cong \CO$; in particular, $Z(\OG e_\chi)$ is a local 
$\CO$-order, and hence $Z(k \tenO \OG e_\chi)$ is 
a local $k$-algebra, by standard lifting theorems for central 
idempotents. It is obvious that
$$Z(k \tenO \OG e_\chi) \supseteq k \tenO Z(\OG e_\chi) = 
k \otimes e_\chi.$$
This inclusion can be proper, or equivalently, the canonical map
$Z(\OG e_\chi)\to$ $Z(k\tenO \OG e_\chi)$ need not be surjective.
The following example illustrates this.

\begin{exa} 
Let $G$ be the dihedral group of order 8, let $\chi \in \Irr(G)$ with 
$\chi(1) = 2$, and let $p=2$. We represent $G$ in the form 
$G = \langle a,b \rangle$ where
$$a = \left( \begin{array}{cc}
0 & 1 \cr -1 & 0
\end{array} \right) \quad \mathrm{and} \quad 
b = \left( \begin{array}{cc}
0 & 1 \cr 1 & 0 \end{array} \right). $$
Then $\mathcal{O} G e_\chi$ is isomorphic to the subalgebra $\Lambda$ 
of $\CO^{2 \times 2}$ generated by $a$ and $b$. 
Note that $ba = -ab$, so that $(1 \otimes b)(1 \otimes a) = 
(1 \otimes a)(1 \otimes b)$ in $k \otimes_\mathcal{O} \Lambda$. 
This shows that $k \tenO \OG e_\chi \cong k \tenO \Lambda$ is 
commutative and of dimension 4.
\end{exa}

\section{Proofs of the theorems \ref{anchors-omnibus1} and
\ref{anchors-omnibus2}} \label{AnchorSection}

\begin{pro} \label{anchorscontainvertices}
Let $G$ be a finite group, $\chi \in \Irr(G)$, and let $P$ be an
anchor of $\chi$. Let $B$ be the block of $\OG$ containing $\chi$, 
and let $L$ be an $\OG$-lattice affording $\chi$. The following hold.

(i) $P$ is contained in a defect group of $B$.

(ii) $P$ contains a vertex of $L$.
\end{pro}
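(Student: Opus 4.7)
The plan is to reduce both statements to the basic compatibility of relative trace maps with $G$-interior $\CO$-algebra homomorphisms, applied to two natural maps: the quotient $f : \OG \to \OG e_\chi$, $x \mapsto xe_\chi$ (for (i)), and the structure map $\OG \to \End_\CO(L)$ (for (ii)).

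For (i), note that $f$ is a homomorphism of $G$-interior $\CO$-algebras sending the block idempotent $b$ of $B$ to $e_\chi$. Writing $b = \Tr^G_D(c)$ for some $c \in (\OG)^D$, where $D$ is a defect group of $B$, and applying $f$, which commutes with relative trace, yields $e_\chi = \Tr^G_D(f(c)) \in (\OG e_\chi)^G_D$. Minimality of the anchor then forces $P$ to be contained in a $G$-conjugate of $D$, which is again a defect group of $B$.

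For (ii), the idea is to turn this around. The $\OG$-action on $L$ gives a $G$-interior $\CO$-algebra homomorphism $\OG \to \End_\CO(L)$. Because $K \tenO L \cong M_\chi$ is killed by every $e_{\chi'}$ with $\chi' \neq \chi$, and $L$ is $\CO$-pure in $K \tenO L$, each such $e_{\chi'}$ acts as zero on $L$; hence the structure map kills $\Ker(f)$ and descends to a $G$-interior $\CO$-algebra homomorphism $g : \OG e_\chi \to \End_\CO(L)$ with $g(e_\chi) = \id_L$. Writing $e_\chi = \Tr^G_P(c')$ for some $c' \in (\OG e_\chi)^P$ and applying $g$ yields $\id_L = \Tr^G_P(g(c')) \in (\End_\CO(L))^G_P$, so by Higman's criterion $L$ is $P$-projective, and consequently $P$ contains a vertex of $L$.

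The only genuine verification---rather than a serious obstacle---is the factorisation of the structure map through $\OG e_\chi$, which uses nothing beyond the $\CO$-purity of $L$ in $K \tenO L$ together with the vanishing of $e_{\chi'}$ ($\chi' \neq \chi$) on $M_\chi$. All other steps are formal compatibilities between the relative trace maps on $(\OG)^G$, $(\OG e_\chi)^G$, and $(\End_\CO(L))^G$.
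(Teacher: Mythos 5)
Your proposal is correct and follows essentially the same route as the paper: for (i) one writes $1_B=\Tr^G_D(c)$ and multiplies by $e_\chi$ (your homomorphism $f$ is just this computation), and for (ii) one applies the element with $\Tr^G_P(c')=e_\chi$ to $L$ and invokes Higman's criterion. The only difference is that you spell out the (routine) verification that the action of $\OG$ on $L$ factors through $\OG e_\chi$, which the paper leaves implicit.
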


\begin{proof}
Let $D$ be a defect group of $B$. Then there exists an element 
$x \in (\OG)^D$ such that $\Tr_D^G(x) = 1_B$.
Then $xe_\chi \in (\OG e_\chi)^D$, and $\Tr_D^G(xe_\chi) = 
\Tr_D^G(x)e_\chi = 1_Be_\chi = e_\chi$. Thus $D$
contains an anchor of $\chi$, and (i) follows.
Let $y \in (\OG e_\chi)^P$ such that $\Tr_P^G(y) = e_\chi$. 
Then the map $\eta: L \to L$ sending $z$ to $yz$, is an 
element in $\End_{\OP}(L)$ such that $\Tr_P^G(\eta) = \id_L$. By 
Higman's criterion, $P$ contains a vertex of $L$, whence the
result.
\end{proof}

\begin{pro}\label{bigradical}
Let $G$ be a finite group, $\chi\in$ $\Irr(G)$, and let $P$
be an anchor of $\chi$. Then $O_p(G) \leq P$.
\end{pro}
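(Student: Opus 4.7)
The plan is to combine the Brauer construction with the Mackey formula for relative traces. Set $Q=O_p(G)$ and $A=\OG e_\chi$. I aim to show that $\Br_Q(e_\chi)\neq 0$ in the Brauer quotient $A(Q)$; together with a Mackey decomposition of $\Tr_P^G(c)\in A^Q$ this will force $Q\leq P$ (no conjugacy ambiguity, since $Q$ is normal).

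For the nonvanishing of $\Br_Q(e_\chi)$, I would fix any $\varphi\in\IBr(G|\chi)$ (nonempty because $\chi^\circ(1)=\chi(1)\neq 0$). The semisimple quotient description $k\tenO A/J(k\tenO A)\cong \prod_{\varphi} k^{\varphi(1)\times\varphi(1)}$ from Section~\ref{OrderSection} supplies a surjective $G$-interior algebra homomorphism $\pi\colon A\twoheadrightarrow \End_k(N_\varphi)$. Because $Q$ is a normal $p$-subgroup, the fixed points $N_\varphi^Q$ form a nonzero $kG$-submodule of the simple $kG$-module $N_\varphi$, so $Q$ acts trivially on $N_\varphi$ and therefore on $\End_k(N_\varphi)$. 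Consequently $(\End_k(N_\varphi))^R=\End_k(N_\varphi)$ for every $R\leq Q$, and $\Tr_R^Q$ is multiplication by $[Q:R]$, which vanishes in characteristic $p$ for $R<Q$. Hence $(\End_k(N_\varphi))(Q)=\End_k(N_\varphi)\neq 0$ and the image of $1$ is nonzero. Functoriality of the Brauer construction under $\pi$ (which sends relative traces to relative traces, inducing $A(Q)\to (\End_k(N_\varphi))(Q)$ sending $\Br_Q(e_\chi)$ to $\Br_Q(1)=1$) then forces $\Br_Q(e_\chi)\neq 0$ in $A(Q)$.

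For the second step, writing $e_\chi=\Tr_P^G(c)$ with $c\in A^P$ and applying the Mackey formula to its $Q$-fixed decomposition yields
$$e_\chi=\sum_{d\in[Q\backslash G/P]}\Tr_{Q\cap {}^dP}^Q({}^dc).$$
Since $Q$ is normal in $G$, we have $Q\leq {}^dP$ if and only if $Q\leq P$. If $Q\not\leq P$, every summand is a relative trace from a proper subgroup of $Q$, forcing $e_\chi\in\sum_{R<Q}A_R^Q$ and contradicting the nonvanishing of $\Br_Q(e_\chi)$. Hence $Q\leq P$.

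The main obstacle is the first step: one must exhibit a concrete simple quotient of $A$ on which $Q$ acts trivially and verify that the Brauer construction is functorial enough to transport nonvanishing from that quotient back to $A$. The Mackey step is then a direct double-coset calculation and requires no further input beyond the normality of $O_p(G)$.
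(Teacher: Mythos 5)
Your argument is correct, but it takes a genuinely different route from the paper's. You prove $Q=O_p(G)\leq P$ by showing that the Brauer quotient $(\OG e_\chi)(Q)$ is nonzero --- via a surjective $G$-interior algebra homomorphism onto $\End_k(N_\varphi)$ on which $Q$ acts trivially --- and then applying the Mackey formula to $e_\chi=\Tr^G_P(c)$ to conclude that $e_\chi$ would lie in $\sum_{R<Q}(\OG e_\chi)^Q_R$ if $Q\not\leq P$. The paper instead works directly with the Jacobson radical: for $g\in N=O_p(G)$ one has $g-1\in J(\ON)\subseteq J(\OG)$, so conjugation by $N$ is trivial modulo $J(\OG e_\chi)$; if $P<PN$, then $\Tr^{PN}_P(d)\equiv |PN:P|\,d\equiv 0$ modulo $J(\OG e_\chi)$, and applying $\Tr^G_{PN}$ (which preserves the radical) forces $e_\chi\in J(\OG e_\chi)$, a contradiction. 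Both arguments rest on the same two facts --- $O_p(G)$ acts trivially on the semisimple quotient, and a relative trace from a proper subgroup of a $p$-group contributes a factor divisible by $p$ --- but the paper's version is more elementary and self-contained (no Mackey decomposition, no Brauer construction, no identification of the simple quotients of $k\tenO\OG e_\chi$), at the cost of the slightly less transparent detour through the chain $P<PN\leq G$. Your version buys the stronger intermediate statement $\Br_Q(e_\chi)\neq 0$, which fits the local-pointed-group viewpoint of Theorem \ref{sourcealgebra}, and the two obstacles you flag do go through: the projection onto $\End_k(N_\varphi)$ commutes with relative traces and kills $J(\CO)(\OG e_\chi)^Q$, hence descends to Brauer quotients, and the triviality of the $Q$-action on $N_\varphi$ is the standard fixed-point argument for a normal $p$-subgroup acting on a simple module in characteristic $p$.
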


\begin{proof}
Set $N=$ $O_p(G)$. Arguing by contradiction, suppose that $P$ does
not contain $N$. Then $P$ is a proper subgroup of $PN$.
For $g \in N$, we have $g-1 \in J(\ON) \subseteq J(\OG)$. It
follows that $gd-d$, $dg^{-1}-d$, and $gdg^{-1}-d=$ $gdg^{-1}-dg^{-1}+
dg^{-1}-d$ are contained in $J(\OG e_\chi)$ for all $g\in$ $N$ and
all $d\in$ $\OG e_\chi$.  Let $d\in$ $(\OG e_\chi)^P$ such that
$\Tr^G_P(d)=$ $e_\chi$. By the above , we have
$\Tr^{PN}_P(d)-|PN:P|d\in$ $J(\OG e_\chi)$. Since $p$ divides 
$|PN:P|$, it follows that $x=\Tr^{PN}_P(d)\in$ $J(\OG e_\chi)$.
Applying $\Tr^G_{PN}$ to this element shows that
$e_\chi=$ $\Tr^G_P(d)=$ $\Tr^G_{PN}(x)\in$ $J(\OG e_\chi)$, a
contradiction.
\end{proof}

Let $R$ be an $\CO$-order with unitary suborder $S$. We recall that 
$R$ is called a \emph{separable extension} of $S$ if the 
multiplication map $\mu: R \otimes_S R \to R$ sending 
$x \otimes y$ to $xy$ for all $x$, $y\in$ $R$
splits as a map of $R$-$R$-bimodules. This is equivalent to the 
condition that $1_R = \mu(z)$ for some $z \in (R \otimes_S R)^R$.

\begin{pro}\label{separable}
Let $G$ be a finite group and $\chi \in \Irr(G)$. Let $P$ be an anchor 
of $\chi$. Then the $\CO$-order $\OG e_\chi$ is a separable extension 
of its local suborder $\OP e_\chi$.
\end{pro}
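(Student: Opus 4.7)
The plan has two parts. First, for the locality of $\OP e_\chi$: the map $\OP \to \OG e_\chi$, $x \mapsto xe_\chi$, is an $\CO$-algebra homomorphism (since $e_\chi$ is central in $\OG$), with image $\OP e_\chi$. As $P$ is a $p$-group, $\OP$ is local, and hence so is its quotient $\OP e_\chi$.

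For the separability claim, set $A = \OG e_\chi$. Because $h \in P$ acts on $A$ by conjugation by $he_\chi$, an element of $A$ lies in $A^P$ if and only if it commutes with $\OP e_\chi$. Since $P$ is an anchor, there exists $y \in A^P$ with $\Tr^G_P(y) = e_\chi$. Fix left coset representatives $\{g_i\}$ of $P$ in $G$ and consider
$$z \;=\; \sum_i (g_i e_\chi) \otimes_{\OP e_\chi} (y g_i^{-1} e_\chi) \;\in\; A \otimes_{\OP e_\chi} A.$$
I would then verify three points: (a) $z$ is independent of the choice of coset representatives, which follows on replacing $g_i$ by $g_i h$ for $h\in P$ and sliding $he_\chi$ across the tensor symbol using that $y$ centralizes $\OP e_\chi$; (b) $\mu(z) = \sum_i g_i y g_i^{-1} e_\chi = \Tr^G_P(y)\cdot e_\chi = e_\chi$; and (c) $az = za$ for every $a \in A$. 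Once these are established, the assignment $1_A \mapsto z$ defines an $A$-$A$-bimodule splitting of $\mu$, which is the required separability statement.

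The main obstacle is (c). By $\CO$-linearity and the fact that the elements $g_0 e_\chi$ with $g_0\in G$ span $A$, it suffices to check $(g_0 e_\chi)z = z(g_0 e_\chi)$ for each $g_0\in G$. Writing $g_0 g_i = g_{\sigma(i)} h_i$ for a permutation $\sigma$ of the index set and elements $h_i \in P$, one computes
$$(g_0 e_\chi)\cdot z \;=\; \sum_i (g_{\sigma(i)} h_i e_\chi) \otimes_{\OP e_\chi} (y g_i^{-1} e_\chi);$$
sliding $h_i e_\chi$ across the tensor (valid because $y$ commutes with $\OP e_\chi$) together with the identity $h_i g_i^{-1} = g_{\sigma(i)}^{-1} g_0$ converts this expression, after reindexing $j=\sigma(i)$, into $z\cdot (g_0 e_\chi)$. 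The key conceptual point is that the two defining properties of an anchor, namely $y\in A^P$ and $\Tr^G_P(y)=e_\chi$, are used in tandem: the $P$-fixedness underwrites the well-definedness in (a) and the transit across the tensor in (c), while the trace condition gives (b).
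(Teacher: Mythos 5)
Your proof is correct and takes essentially the same route as the paper's: both construct a separability element $\sum_g (g\,\cdot)e_\chi \otimes_{\OP e_\chi} (\cdot\, g^{-1})e_\chi$ from the relative trace datum, check independence of the transversal and centrality by sliding elements of $\OP e_\chi$ across the tensor (using $y\in A^P$), and apply $\mu$ to land on $e_\chi$. The only cosmetic difference is that the paper places the trace element in the left tensor factor and deduces centrality from the observation that $hT$ is again a transversal, which is exactly your explicit $\sigma$, $h_i$ reindexing in compressed form.
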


\begin{proof}
The $\CO$-order $\OP$ is local, and the map 
$\OP \to \OG e_\chi$ induced by multiplication with $e_\chi$
is a homomorphism of $\CO$-algebras with image $\OP e_\chi$. Thus 
$\OP e_\chi$ is a local suborder of $\OG e_\chi$. Let $T$ be a 
transversal for $G/P$, and let $d \in (\OG e_\chi)^P$ such that
$$e_\chi = \Tr_P^G(d) = \sum_{g \in T} gdg^{-1}.$$
Then the element $x = \sum_{g \in T} gde_\chi \otimes g^{-1}e_\chi \in 
\OG e_\chi \otimes_{\OP e_\chi}
\OG e_\chi$ is independent of the choice of $T$ since
$$gude_\chi \otimes u^{-1}g^{-1} e_\chi = gde_\chi ue_\chi \otimes 
u^{-1}e_\chi g^{-1}e_\chi = gde_\chi \otimes g^{-1}e_\chi$$
for $g \in T$ and $u \in P$. But, for $h \in G$, $hT$ is another 
transversal for $G/P$. Thus
$$x = \sum_{g \in T} hgde_\chi \otimes g^{-1}h^{-1} e_\chi = hxh^{-1}.$$
This shows that $x \in (\OG e_\chi \otimes_{\OP e_\chi} 
\OG e_\chi)^{\mathcal{O}Ge_\chi}$, and
$$\mu(x) = \sum_{g \in T} gde_\chi g^{-1}e_\chi = 
\Tr_P^G(d)e_\chi^2 = e_\chi^3 = e_\chi$$
where $\mu: \OG e_\chi \otimes_{\OP e_\chi} \OG e_\chi 
\to\OG e_\chi$, denotes the multiplication map sending 
$a\otimes b$ to $ab$. The result  follows.
\end{proof}

\begin{rem}
The proposition above implies that $k \tenO \OG e_\chi$ is also a 
separable extension of $k \tenO \OP e_\chi$. Note that 
$k \tenO \OP e_\chi$ is a homomorphic image of the group algebra 
$kP$. Thus, if $P$ is cyclic, then $k \tenO \OP e_\chi$ 
has finite representation type. 
\end{rem}

\begin{pro}\label{bimod}
Let $G$ be a finite group and $\chi \in \Irr(G)$. Let $P$ be an anchor 
of $\chi$. Then $\Delta P$ is  contained in a vertex of the 
$\CO(G \times G)$-module $\CO Ge_{\chi}$ and $P\times P $ contains   
a vertex of $\OG e_{\chi}$. Moreover, $\Delta P$ is a vertex of 
$\OG e_{\chi} $ if and only if $\chi$ is of defect zero. 
\end{pro}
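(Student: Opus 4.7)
The proposition has three claims to prove: (i) a vertex $V$ of $\OG e_\chi$, viewed as a $\CO(G\times G)$-module, contains a conjugate of $\Delta P$; (ii) $P\times P$ contains such a vertex; and (iii) $V = \Delta P$ up to conjugacy if and only if $\chi$ has defect zero. A preliminary observation is that $\OG e_\chi$ is indecomposable as a $\CO(G\times G)$-lattice, because $K\tenO \OG e_\chi \cong \End_K(M_\chi) \cong M_\chi \boxtimes M_\chi^*$ is simple as a $K(G\times G)$-module, so any splitting of the lattice would force a summand to vanish after tensoring with $K$. Throughout, fix $d\in(\OG e_\chi)^P$ with $\Tr^G_P(d) = e_\chi$, provided by the anchor assumption.

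For (ii), I would invoke Higman's criterion with the endomorphism $\phi(x) = dxd$. Since $d$ is fixed under $P$-conjugation, $\phi$ is $(P\times P)$-equivariant for the bimodule action $(p_1, p_2)\cdot x = p_1 x p_2^{-1}$, and for a left transversal $T$ of $P$ in $G$ one computes
\[
\Tr^{G\times G}_{P\times P}(\phi)(x) \;=\; \Bigl(\sum_{g\in T} gdg^{-1}\Bigr)\, x\, \Bigl(\sum_{h\in T} hdh^{-1}\Bigr) \;=\; e_\chi\, x\, e_\chi \;=\; x,
\]
so $\OG e_\chi$ is $(P\times P)$-projective and some vertex sits inside $P\times P$.

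For (i), I would identify two Brauer quotients. The $\Delta P$-action on $\OG e_\chi$ coming from the bimodule structure is precisely $P$-conjugation, so the $\CO(G\times G)$-module Brauer quotient $(\OG e_\chi)(\Delta P)$ agrees, as a $k$-vector space, with the $G$-algebra Brauer quotient $(\OG e_\chi)(P)$. The latter is nonzero because $P$ is a defect group of the primitive $G$-algebra $\OG e_\chi$, so local points of $P$ survive to nonzero elements in the quotient. By the standard theorem that for an indecomposable $\CO H$-module $M$ with vertex $V$, the vanishing $M(Q) \ne 0$ forces $Q$ to be $H$-subconjugate to $V$, I deduce that $\Delta P$ is contained in a vertex of $\OG e_\chi$.

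For (iii) the backward direction is immediate: if $\chi$ has defect zero then $P = 1$, $\Delta P = 1$, and $\OG e_\chi \cong \End_\CO(L) \cong L\tenO L^*$ for a projective $\OG$-lattice $L$; this is a $\CO(G\times G)$-projective module, so its vertex is $1 = \Delta P$. The forward direction is the crux. Suppose $\Delta P$ is a vertex. Since $\Delta P \le \Delta G$, the module $\OG e_\chi$ is $\Delta G$-projective, hence a summand of $\Ind^{G\times G}_{\Delta G}(N)$ for some $\CO G$-module $N$. A one-double-coset Mackey computation (using $(G\times G) = \Delta G \cdot (1\times G)$) shows that $\Ind^{G\times G}_{\Delta G}(N)$ restricted to $1\times G$ is $\CO G\tenO N$, free of rank $\rk_\CO N$ as a right $\OG$-module. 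Hence $\OG e_\chi$ is projective as right $\OG$-module; since $K\tenO \OG e_\chi$ is $M_\chi$-isotypic, there must exist a projective right $\OG$-lattice whose character is $\chi$, which is the classical criterion for $\chi$ to be of defect zero. The main obstacle is precisely this forward direction of (iii): combining the sandwich $\Delta P \le V \le P\times P$ from (i) and (ii) into the observation that $V = \Delta P$ puts $V$ inside $\Delta G$, and then extracting one-sided projectivity from two-sided $\Delta G$-projectivity via Mackey.
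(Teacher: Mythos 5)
Your proof is correct and follows essentially the same route as the paper's: nonvanishing of the Brauer quotient at $\Delta P$ for part (i), an explicit relative trace built from the element $d$ for part (ii), and one-sided projectivity of the bimodule forcing its character, a multiple of $\chi$, to vanish on $p$-singular elements for part (iii). The only substantive difference is in (ii), where you verify Higman's criterion for $P\times P$ directly via the single endomorphism $x\mapsto dxd$, whereas the paper establishes relative $P\times G$- and $G\times P$-projectivity separately and then combines them using conjugacy of vertices; your version is a little more direct.
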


\begin{proof}   
Viewing $\OG e_{\chi}$  as an $\CO(G  \times G)$-module, we have
$\OG e_{\chi}(\Delta P) \ne 0$, where $\OG e_\chi(\Delta P)$ is the Brauer
quotient of $\OG e_\chi$ (cf. \cite[\S 11]{T}).  
The first assertion follows by \cite[Exercise 27.2 (a)]{T}. 
Next, we claim that $\OG e_{\chi}$ is 
relatively $P\times G$-projective. Indeed, let $T$ be a transversal 
for $G/P$, and let $d \in (\OG e_\chi)^P$ such that
$$e_\chi = \Tr_P^G(d) = \sum_{g \in T} gdg^{-1}.$$
The map
$$ \OG e_{\chi} \to \CO(G\times G) \otimes_{\CO (P \times G)} 
\OG e_{\chi}, \ \ (x \to  \sum_{ u\in  T}     
(g,1) \otimes dg^{-1} x  ), \ \ x \in  \CO Ge_{\chi} $$
is an  $\CO(G \times G)$-module splitting of the  surjective  
$\CO(G \times G) $-module homomorphism 
$$\CO(G\times G) \otimes_{\CO (P \times G)}   \CO  G e_{\chi},  
\ \ (y \otimes y' \to yy'), \ \ y\in  \CO(G\times G)  , 
y' \in \CO G e_{\chi} $$  
proving the claim. Similarly, $\OG e_{\chi} $ is relatively 
$G\times  P$-projective.   Let $R_1 $ be a vertex of 
$\OG e_{\chi}$ contained in  $G \times P$ and let $R_2$ be a vertex 
of $\CO G e_{\chi} $  contained in $P\times G $. Since $ R_1 $ and 
$R_2$ are $G \times G$-conjugate, it follows that 
$R_1 \leq  \,^x P \times P$ for some $x \in G$ and hence that  
$\OG e_{\chi} $   is relatively  $P \times P = 
\,^{(x^{-1}, 1) } (\,^xP \times P )$-projective.  This proves the 
second assertion.  

Finally, if $\Delta P$ is a vertex  of $\CO(G\times G)$, then  
$\Res^{\CO(G \times G) }_{\CO (G \times 1)} \CO G e_{\chi}$ is 
projective. In particular, the character of  $\CO G e_{\chi}$ as a 
left $\OG$-module vanishes on the $p$-singular elements of $G$. Since 
the character of $\OG e_{\chi}$ is a multiple of $\chi$, it follows 
that $\chi$ is of $p$-defect zero. Conversely, if $\chi $ is of 
$p$-defect zero, then by Proposition \ref{anchorscontainvertices} 
we have $P =1$, hence $1= P \times P = \Delta P $ is a vertex of 
$\OG  e_{\chi} $. 
\end{proof} 

\begin{proof}[{Proof of Theorem \ref{anchors-omnibus1}}]  
Proposition~\ref{anchorscontainvertices} implies 
(a) and (b) of  the theorem. Proposition~\ref {bigradical} proves (c). 
Part (d) follows from Proposition~\ref{separable} and  Part (e) is  
proved in Proposition~\ref{bimod}.    
\end{proof}

\begin{pro} \label{uniquelattice}
Let $G$ be a finite group, and let $\chi \in \Irr(G)$ with anchor $P$. 
Suppose that $\chi^\circ \in \IBr(G)$. Let $L$ be an $\OG$-lattice 
affording $\chi$. Then $L$ is unique up to isomorphism, $P$ is a 
vertex of $L$, and $P\times P$ is a vertex of the 
$\CO(G\times G)$-module $\OG e_\chi$.
\end{pro}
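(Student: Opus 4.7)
The plan is to establish the three conclusions in order, reducing each to results already stated in the excerpt.

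For uniqueness of $L$: since $\chi^\circ \in \IBr(G)$, the discussion in Section~\ref{OrderSection} gives an $\CO$-algebra isomorphism $\OG e_\chi \cong \CO^{\chi(1) \times \chi(1)}$. This matrix algebra has a unique indecomposable lattice module up to isomorphism, namely its unique simple module, of $\CO$-rank $\chi(1)$. Any $\OG$-lattice $L$ affording $\chi$ is annihilated by $1 - e_\chi$ (since $K \tenO L$ is the simple $KGe_\chi$-module $M_\chi$), hence is an $\OG e_\chi$-lattice of $\CO$-rank $\chi(1)$, forcing $L$ to be isomorphic to this unique module.

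Next, I would identify $\OG e_\chi$ with $\End_\CO(L)$ as $G$-interior $\CO$-algebras via the faithful action on $L$. This yields an isomorphism $\OG e_\chi \cong L \otimes_\CO L^*$ of $\CO(G \times G)$-modules, with the two factors of $G \times G$ acting on $L$ and $L^*$ respectively. Let $Q$ be a vertex of $L$; by Theorem~\ref{anchors-omnibus1}(b) we may take $Q \leq P$. The tensor identification shows $\OG e_\chi$ is $(Q \times Q)$-projective, so any vertex $V$ of $\OG e_\chi$ is $(G \times G)$-conjugate to a subgroup of $Q \times Q$. By Theorem~\ref{anchors-omnibus1}(e) we may also choose $V$ so that $\Delta P \leq V \leq P \times P$. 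Projecting $\Delta P$ in both coordinates after a suitable conjugation into $Q \times Q$ shows $P$ is contained in a $G$-conjugate of $Q$; combined with $Q \leq P$, this forces $Q = P$, so $P$ is a vertex of $L$.

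For the final assertion, with $V$ satisfying $\Delta P \leq V \leq P \times P$, I would show $V = P \times P$. Restricting $\OG e_\chi \cong L \otimes_\CO L^*$ to $\{1\} \times G \cong G$, the first factor carries trivial action, so this restriction is isomorphic to $(L^*)^{\oplus \chi(1)}$ as an $\CO G$-module. By Mackey's formula applied to a $V$-projective presentation of $\OG e_\chi$, each indecomposable summand of this restriction has vertex contained in a $G$-conjugate of $V_2 := \{v \in G : (1, v) \in V\}$. Since $L^*$ has vertex $P$, we obtain that $P$ is contained in a $G$-conjugate of $V_2$; combined with $V_2 \leq P$ (coming from $V \leq P \times P$), order considerations give $V_2 = P$, so $\{1\} \times P \leq V$. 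The symmetric argument using $G \times \{1\}$ yields $P \times \{1\} \leq V$, hence $V \supseteq P \times P$, and therefore $V = P \times P$.

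The main obstacle I anticipate lies in this final step: one must compute the intersection $(\{1\} \times G) \cap {}^{(g_1, g_2)}V$ carefully for $V \leq P \times P$ and apply Mackey precisely enough to conclude that the vertex-$P$ constraint on $L^*$ forces $V_2$ to be all of $P$ rather than a proper subgroup.
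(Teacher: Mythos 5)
Your proof is correct, and the opening step (uniqueness of $L$ via $\OG e_\chi\cong\CO^{\chi(1)\times\chi(1)}$, followed by the identification $\OG e_\chi\cong\End_\CO(L)\cong L\tenO L^*$) coincides with the paper's. Where you diverge is in how you extract ``$P$ is a vertex of $L$'': the paper observes that $\OG e_\chi\to\End_\CO(L)$ is an isomorphism of \emph{$G$-interior algebras}, so the two primitive $G$-algebras have the same defect groups, and a defect group of $\End_\CO(L)$ is a vertex of $L$ by Higman's criterion --- a one-line transfer. You instead route the argument through the $\CO(G\times G)$-module structure: relative $Q\times Q$-projectivity of $L\tenO L^*$ for a vertex $Q\leq P$ of $L$, combined with Theorem \ref{anchors-omnibus1}(e) (a vertex of $\OG e_\chi$ contains $\Delta P$), forces ${}^gP\leq Q\leq P$ and hence $Q=P$. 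This is valid but more circuitous, and it leans on Proposition \ref{bimod} (the Brauer-quotient computation $\OG e_\chi(\Delta P)\neq 0$), which the paper's proof of this proposition does not need. One small imprecision: Theorem \ref{anchors-omnibus1}(e) does not literally assert that a \emph{single} vertex $V$ satisfies $\Delta P\leq V\leq P\times P$; it gives one vertex containing $\Delta P$ and a (conjugate) vertex inside $P\times P$. Your argument only ever uses these two facts separately, so nothing breaks, but you should phrase it that way. For the final claim, the obstacle you flag is not a real one: $(\{1\}\times G)\cap{}^{(g_1,g_2)}V=\{1\}\times{}^{g_2}V_2$ with $V_2=\{v:(1,v)\in V\}$, so Mackey gives that every summand of $\Res_{\{1\}\times G}(\OG e_\chi)\cong (L^*)^{\oplus\chi(1)}$ is relatively ${}^{g_2}V_2$-projective; since $L^*$ has vertex $P$ and $V_2\leq P$, order comparison gives $V_2=P$, and symmetrically for the first coordinate. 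The paper leaves exactly this standard computation implicit when it asserts that $\End_\CO(L)\cong L\tenO L^*$ ``implies that $P\times P$ is a vertex,'' so your write-up supplies details the paper omits.
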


\begin{proof}
The hypotheses imply that the $\CO$-orders $\OG e_\chi$ and 
$\CO^{\chi(1) \times \chi(1)}$ are isomorphic. Since $\CO^{\chi(1)}$ 
is the only indecomposable $\CO^{\chi(1) \times \chi(1)}$-lattice, up 
to isomorphism, $L$ is the only indecomposable $\OG e_\chi$-lattice, 
up to isomorphism. Thus the canonical map 
$$\OG e_\chi \longrightarrow \End_\CO(L)$$ 
is an isomorphism of $G$-interior $\CO$-algebras, and hence these two 
primitive $G$-interior $\CO$-algebras have the same defect groups.
Higman's criterion implies that $P$ is a vertex of $L$. Moreover,
we have a canonical $\CO(G\times G)$-module isomorphism
$\End_\CO(L)\cong$ $L\tenO L^*$, where $L^*$ is the $\CO$-dual of $L$.
This implies that $P\times P$ is a vertex of $\OG e_\chi$. 
\end{proof}

We observe next that the multiplicity modules of the primitive
$G$-interior $\CO$-algebras $\OG e_\chi$ are simple. Background 
material on multiplicity modules can be found in 
\cite[\S 9 Appendix]{T-DG}.

\begin{pro} \label{multpro}
Let $G$ be a finite group and $\chi\in$ $\Irr(G)$. Let $P_\tau$
be a defect pointed group of the primitive $G$-interior $\CO$-algebra
$\OG e_\chi$. Then $P$ is an anchor of $\chi$ and the multiplicity 
module of $\tau$ is simple. In particular, we have $O_p(N_G(P_\tau))=$ 
$P$, and $P$ is centric in a fusion system of the block containing 
$\chi$.
\end{pro}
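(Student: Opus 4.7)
The first claim is immediate from the definitions: by Definition~\ref{defn:anchors}, an anchor of $\chi$ is precisely a defect group of the primitive $G$-interior $\CO$-algebra $\OG e_\chi$, so the first component $P$ of the defect pointed group $P_\tau$ is an anchor of $\chi$.

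The crux is to show that the multiplicity module $V(\tau)$ is simple as a module over the twisted group algebra $k_*\bar N_G(P_\tau)$, where $\bar N_G(P_\tau) = N_G(P_\tau)/P$. My plan is to follow the line of reasoning used by Barker in the proof of \cite[Theorem]{Bar}, combining Kn\"orr's theorem \cite{K} on vertices of simple modules in a block, the Picaronny--Puig description of the multiplicity module \cite{PiPu}, and Th\'evenaz's framework for multiplicity modules \cite{T-DG}. The hypothesis on $A = \OG e_\chi$ required by these results is that $A$ has ``primitive centre'' in the sense that $Z(A) = \CO \cdot 1_A$; this holds here because $Z(\OG e_\chi) = \CO e_\chi \cong \CO$, as observed at the outset of Section~\ref{OrderSection}. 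Under this hypothesis, the multiplicity module of any defect pointed group is a simple (and in fact projective) $k_*\bar N_G(P_\tau)$-module.

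The last two assertions then follow by standard arguments. Suppose $O_p(\bar N_G(P_\tau)) \neq 1$; then this normal $p$-subgroup acts trivially on the simple module $V(\tau)$, so the restriction of $V(\tau)$ to $k[O_p(\bar N_G(P_\tau))]$ is a non-zero sum of trivial modules. But any projective $k_*\bar N_G(P_\tau)$-module restricts to a projective module over every subgroup, and a non-zero projective module of a non-trivial finite $p$-group can contain no trivial summand---contradiction. Hence $O_p(\bar N_G(P_\tau)) = 1$, which, since $P \trianglelefteq N_G(P_\tau)$ is a $p$-group, is equivalent to $O_p(N_G(P_\tau)) = P$. The centricity of $P$ in a fusion system of the block containing $\chi$ then follows from the standard fact that a local pointed group $P_\tau$ satisfies $O_p(N_G(P_\tau)) = P$ if and only if $P$ is centric in the fusion system of the block.

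The main obstacle is the simplicity (and projectivity) of $V(\tau)$. While this is classical for block algebras (after Puig), extracting it in the present setting of the primitive interior $G$-algebra $\OG e_\chi$---which is in general not a block---requires the careful combination of the three cited results in the manner of Barker's argument, exploiting precisely the primitive-centre property of $\OG e_\chi$.
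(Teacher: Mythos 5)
Most of your argument is sound and close to the paper's: the identification of the defect groups of $\OG e_\chi$ with the anchors is indeed the standard fact about defect pointed groups; the simplicity of the multiplicity module rests, exactly as you say, on $(\OG e_\chi)^G=Z(\OG e_\chi)\cong\CO$ (the paper cites \cite[9.1.(c), 9.3.(b)]{T-DG} for this, so pinning it down there would complete that step); and your deduction of $O_p(N_G(P_\tau))=P$ is a legitimate direct variant of Lemma \ref{multOp}, which instead passes to a central $p'$-extension $N'$ with $k_\alpha\bar N\cong kN'e$ and uses that a simple projective module lies in a defect zero block. (One small caveat in your variant: ``$O_p(\bar N)$ acts trivially on the simple module'' needs a word in the twisted setting -- it holds because a section of the central extension over a $p$-subgroup is unique, as $k^\times$ has no $p$-torsion in characteristic $p$, hence is normalized by $\bar N$.)

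The genuine gap is the final step. The ``standard fact'' you invoke -- that a local pointed group $P_\tau$ satisfies $O_p(N_G(P_\tau))=P$ if and only if $P$ is centric in the fusion system of the block -- is not a standard fact, and the implication you need is false in general: for any block $B$ of positive defect of a group with $O_p(G)=1$ (say the principal block of $\mathfrak{S}_3$ at $p=2$), the trivial subgroup with any of its (automatically local) points $\delta$ satisfies $N_G(1_\delta)=G$ and $O_p(N_G(1_\delta))=1$, yet the trivial subgroup is not centric. The condition $O_p(N_G(P_\tau))=P$ is a radical-type condition and carries no lower bound on $C_D(P)$; moreover your $P_\tau$ is a pointed group on the quotient order $\OG e_\chi$, not on $B$, so even relating it to the fusion system of $B$ requires an argument. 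The paper obtains centricity from genuinely different input: by Kn\"orr's theorem \cite{K}, every vertex of an $\OG$-lattice affording $\chi$ is centric in a fusion system of the block; centric subgroups are upwardly closed; and by Proposition \ref{anchorscontainvertices} the anchor $P$ contains a vertex of some (indeed every) such lattice. Alternatively, one can apply the results of \cite{PiPu} directly to the primitive $G$-interior algebra $\OG e_\chi$, which is the route Barker's proof in \cite{Bar} follows. One of these substantive inputs is needed; neither the simplicity of the multiplicity module alone nor the equality $O_p(N_G(P_\tau))=P$ yields centricity formally.
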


\begin{proof}
The fact that $P$ is an anchor of $\chi$ is a standard property of 
local pointed groups on primitive $G$-algebras (see e. g. 
\cite[(18.3)]{T}). 
As noted earlier, we have $(\OG e_\chi)^G\cong$ $\CO$. Set $\bar N=$ 
$N_G(P_\tau)/P)$. It follows from \cite[9.1.(c), 9.3.(b)]{T-DG} that 
the multiplicity module $V_\tau$ of $\tau$ is simple.
The well-known Lemma \ref{multOp} below implies that 
$O_p(N_G(P_\tau))=$ $P$.
By results of Kn\"orr in \cite{K}, every vertex of a lattice with 
irreducible character $\chi$ is centric in a fusion system of the block 
containing $\chi$. Centric subgroups in a fusion system are upwardly 
closed. Since the anchor $P$ of $\chi$ contains a vertex of every 
lattice with character $\chi$, the last statement follows. 
One can prove this also by applying the results of \cite{PiPu}
directly to the $G$-interior $\CO$-algebra $\OG e_\chi$.
\end{proof}

\begin{lem} \label{multOp}
Let $G$ be a finite group, $A$ a primitive $G$-algebra, and let
$P_\tau$ be a defect pointed group on $A$. If the multiplicity module 
of $\tau$ is simple, then $O_p(N_G(P_\tau))=$ $P$.
\end{lem}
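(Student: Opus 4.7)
The plan is to combine two ingredients: the standard projectivity of the multiplicity module at a defect pointed group on a primitive $G$-algebra, and an elementary Clifford-theoretic argument applied to a normal $p$-subgroup of $N_G(P_\tau)/P$.

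Set $N = N_G(P_\tau)$ and $\bar N = N/P$, and write $V := V_\tau$ for the multiplicity module, viewed as a module over the twisted group algebra $k_*\bar N$. The key input is that, because $A$ is primitive and $P_\tau$ is a defect pointed group on $A$, the module $V$ is projective over $k_*\bar N$; this is part of Puig's structural analysis of defect pointed groups (see for instance \cite[\S 9 Appendix]{T-DG}), and is the same fact that was used in the proof of Proposition \ref{multpro} to obtain simplicity in that setting.

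Next I argue by contradiction. Suppose $\bar Q := O_p(\bar N)$ is nontrivial. Since $\bar Q$ is a $p$-group and $k$ has characteristic $p$, the relevant Schur multiplier $H^2(\bar Q, k^\times)$ is a $p$-group contained in a $p$-torsion-free group, hence trivial; therefore $k_*\bar Q \cong k\bar Q$ is a local $k$-algebra whose unique simple module is the trivial one. Applying Clifford theory to the simple $k_*\bar N$-module $V$ along the normal subgroup $\bar Q$, the restriction $V\!\!\downarrow_{k_*\bar Q}$ is semisimple, so a nonzero direct sum of copies of the trivial $k\bar Q$-module. Because projectivity is preserved by restriction, the trivial $k\bar Q$-module would have to be projective, which is possible only if $\bar Q = 1$. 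Hence $O_p(\bar N) = 1$; and since $P$ is itself a normal $p$-subgroup of $N$, the standard correspondence $O_p(N/P) = O_p(N)/P$ gives the desired equality $O_p(N_G(P_\tau)) = P$.

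The main obstacle is the projectivity of $V$ over $k_*\bar N$, which is a nontrivial piece of Puig's theory of defect pointed groups on primitive $G$-algebras; once that is invoked, the Clifford/locality argument converting it into the normal-$p$-subgroup statement is entirely routine.
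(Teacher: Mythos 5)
Your argument is correct in substance, and it takes a genuinely different route from the paper's. Both proofs start from the same key input, namely that the multiplicity module $V_\tau$ of a defect pointed group on a primitive $G$-algebra is projective over the twisted group algebra $k_\alpha\bar N$, $\bar N=N_G(P_\tau)/P$ (cf.\ \cite[9.1]{T-DG}). From there the paper invokes \cite[(10.5)]{T} to replace $k_\alpha\bar N$ by $kN'e$ for a central $p'$-extension $N'$ of $\bar N$, so that the simple projective $V_\tau$ lies in a defect zero block of $kN'$; since $O_p(N')$ sits inside every defect group, $O_p(N')=1$, and this is pushed down to $\bar N$. You instead stay inside the twisted group algebra: restrict to $\bar Q=O_p(\bar N)$, use Clifford theory for the normal subgroup $\bar Q$ to see that $\Res\,V_\tau$ is a nonzero sum of trivial $k\bar Q$-modules, and use that restriction along the free extension $k_\alpha\bar Q\subseteq k_\alpha\bar N$ preserves projectivity, forcing the trivial $k\bar Q$-module to be projective and hence $\bar Q=1$. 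Your route avoids the central-extension machinery and is arguably more self-contained; the paper's route gets the slightly stronger structural statement (a defect zero block of $N'$) essentially for free.

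One justification should be repaired: your reason for $k_\alpha\bar Q\cong k\bar Q$ is misstated. The group $H^2(\bar Q,k^\times)$ is not ``contained in'' the $p$-torsion-free group $k^\times$; the correct argument is that every class of the finite $p$-group $\bar Q$ with coefficients in $k^\times$ is $|\bar Q|$-torsion and is represented by a cocycle with values in $p$-power roots of unity, which are trivial in characteristic $p$ (equivalently, $k^\times$ is uniquely $p$-divisible when $k$ is perfect, e.g.\ algebraically closed). Note that for an imperfect splitting field the literal vanishing of $H^2(\bar Q,k^\times)$ can fail (already $H^2(C_p,k^\times)\cong k^\times/(k^\times)^p$), so if you want the statement at the paper's level of generality you should argue via the triviality of the restricted class $\alpha|_{\bar Q}$ — for instance using the central $p'$-extension description of $k_\alpha\bar N$, where the restriction to a $p$-subgroup splits by Schur--Zassenhaus. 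With that point fixed, your Clifford-theoretic argument goes through.
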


\begin{proof}
Set $\bar N=$ $N_G(P_\tau)/P$. The multiplicity module $V_\tau$ of 
$\tau$ is a module over a twisted group algebra $k_\alpha\bar N$ for 
some $\alpha\in$ $H^2(\bar N;k^\times)$. Since $P_\tau$ is maximal, 
$V_\tau$ is projective (cf. \cite[9.1]{T-DG}). Since $V_\tau$ is also 
simple by the assumptions, it follows that $k_\alpha\bar N$ has a block 
which is a matrix algebra over $k$. By \cite[(10.5)]{T} we have
$k_\alpha\bar N\cong$ $kN'e$ for some central $p'$-extension $N'$
of $\bar N$ and some idempotent $e\in$ $Z(kN')$. Thus the multiplicity
module corresponds to a defect zero block of $kN'$.
Since $O_p(N')$ is contained in all defect groups of all blocks of 
$kN'$, it follows that $O_p(N')$ is trivial. By elementary group 
theory, the canonical map $N'\to$ $\bar N$ sends $O_p(N')$ onto 
$O_p(\bar N)$, and hence $O_p(\bar N)$ is trivial, or equivalently, 
$O_p(N_G(P_\tau))=$ $P$.  
\end{proof}

\begin{proof}[{Proof of Theorem \ref{anchors-omnibus2}}]  
Part (a) is proved in Proposition \ref{uniquelattice}.
Part (b) follows from  Proposition~\ref{multpro}, and  (c) is an
immediate consequence of (b). 
If $\chi$ has height zero then $\chi(1)_p = |G:D|_p$ where $D$ 
is a defect group of $B$.  
Then $D$ contains a vertex $Q$ of $L$, and $|G:Q|_p$ divides 
$(\rk_\CO L)_p = \chi(1)_p = |G:D|_p$. This implies $Q = D$. Hence $D$ 
is an anchor of $\chi$ in this case. A similar argument shows that 
$D \times D$ is a vertex of the $\CO(G \times G)$-module $\CO Ge_\chi$.
This proves (d). 
\end{proof}

\begin{pro} \label{Pfixedpoints}
Let $G$ be a finite group, $\chi\in$ $\Irr(G)$, and let $P$ be an
anchor of $\chi$. If $(\OG)^Pe_\chi=$ $(\OG e_\chi)^P$, then $P$
is a defect group of the block of $\OG$ to which $\chi$ belongs.
\end{pro}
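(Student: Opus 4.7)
The plan is to upgrade the relative-trace witness for $e_\chi$ supplied by the anchor condition into a relative-trace witness for the block identity $b$; combined with Proposition~\ref{anchorscontainvertices}(i), which provides the reverse inclusion, this will identify $P$ as a defect group of $B$. The upgrade will be carried out inside $Z(\OG b)$ after inverting a suitable central unit.

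First, I would fix $c \in (\OG e_\chi)^P$ with $\Tr^G_P(c) = e_\chi$, which exists because $P$ is an anchor. The hypothesis lets me write $c = c_0 e_\chi$ for some $c_0 \in (\OG)^P$. Setting $c_1 := c_0 b$, where $b$ is the block idempotent of $B$, the element $c_1$ lies in $(\OG b)^P$ (using $b \in Z(\OG)$), and since $e_\chi$ is central in $KG$ and $G$-fixed, applying $\Tr^G_P$ gives
$$\Tr^G_P(c_1)\, e_\chi \;=\; \Tr^G_P(c_0)\, b e_\chi \;=\; \Tr^G_P(c_0)\, e_\chi \;=\; \Tr^G_P(c) \;=\; e_\chi.$$

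Next, setting $z := \Tr^G_P(c_1) \in (\OG b)^G = Z(\OG b)$, the identity $z e_\chi = e_\chi$ translates exactly to $\omega_\chi(z) = 1$, where $\omega_\chi \colon Z(\OG b) \to \CO$ is the central character of $\chi$ (recording the coefficient of $e_\chi$ in the block decomposition of $Z(KG b)$). Invoking the standard fact that $Z(\OG b)$ is a local $\CO$-algebra whose residue field is $k$, realised by the mod-$J(\CO)$ reduction of any central character of $B$, the element $z$ will lie outside the maximal ideal and hence be a unit. Letting $u \in Z(\OG b)$ be its inverse, the element $u c_1$ lies in $(\OG b)^P$, and the $(\OG b)^G$-linearity of $\Tr^G_P$ yields
$$\Tr^G_P(u c_1) \;=\; u\, \Tr^G_P(c_1) \;=\; u z \;=\; b,$$
so $b \in \Tr^G_P((\OG b)^P)$, placing a defect group of $B$ inside $P$.

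The only delicate point is the locality of $Z(\OG b)$ with residue field identified with $k$ via $\omega_\chi \bmod J(\CO)$; this is where the block-primitivity of $b$ and the splitting field hypothesis are genuinely used, and is what lets me conclude that $\omega_\chi(z) \equiv 1 \pmod{J(\CO)}$ forces $z$ to be a unit rather than just to have a unit component on the $e_\chi$-factor. Every other step is routine manipulation of the hypothesis together with the standard $(\OG b)^G$-linearity of the relative trace.
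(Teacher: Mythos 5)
Your proposal is correct and follows essentially the same route as the paper's proof: rewrite the anchor witness as $c_0e_\chi$ using the hypothesis, observe that $w=\Tr^G_P(c_0b)$ acts as the identity on $e_\chi$ and is therefore a unit in the local ring $Z(\OG b)$, and conclude $b=\Tr^G_P(w^{-1}c_0b)$, so that $P$ contains a defect group, with the reverse containment coming from Theorem \ref{anchors-omnibus1}(a). The only cosmetic difference is that you phrase the unit argument via the central character $\omega_\chi$, while the paper states directly that $w\notin J(Z(\OG b))$.
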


\begin{proof}
Denote by $b$ the primitive idempotent in $Z(\OG)$ such that
$be_\chi=$ $e_\chi$. Suppose that $(\OG)^Pe_\chi=$ $(\OG e_\chi)^P$. 
Then $e_\chi=$ $\Tr^G_P(ce_\chi)=$ $\Tr^G_P(c)e_\chi$ for some $c\in$ 
$(\OGb)^P$. Thus $w=\Tr^G_P(c)$ is not contained in $J(Z(\OGb))$, 
hence invertible in $Z(\OGb)$. Therefore $b=$ $w^{-1}w=$ 
$\Tr^G_P(w^{-1}c)$, which implies that $P$ contains a defect group of 
$b$, hence is equal to a defect group of $b$ by Theorem 
\ref{anchors-omnibus1} (a).
\end{proof}

\section{Anchors and source algebras} \label{sourceSection}

We show that anchors of characters in a block can be read off the 
source algebras of that block, and use this to prove Theorem 
\ref{omni-source}. As before, we refer to \cite[\S 11]{T} for the
Brauer quotient and Brauer homomorphism.

Let $G$ be a finite group, $B$ a block of $\OG$, and $D$ a defect 
group of $B$. We denote by $\Irr(B)$ the subset of all $\chi\in$ 
$\Irr(G)$ satisfying $\chi(1_B)=$ $\chi(1)$. Let $i$ be a source 
idempotent in $B^D$; that is, $i$ is a primitive idempotent in
$B^D$ satisfying $\Br_D(i)\neq$ $0$. Then $A=$ $iBi=$ $i\OG i$ is a 
{\it source algebra of} $B$. We view $A$ as a $D$-interior $\CO$-algebra
with the embedding of $D\to$ $A^\times$ induced by multiplication
with $i$. By \cite[3.5]{Puigpointed}, the $A$-$B$-bimodule $iB=$ 
$i\OG$ and the $B$-$A$-bimodule $Bi=$ $\OG i$ induce a Morita 
equivalence between $A$ and $B$.
In particular, if $X$ is a simple $K\tenO B$-module, then
$iX$ is a simple $K\tenO A$-module, and this correspondence induces
a bijection between $\Irr(B)$ and the set of isomorphism classes
of simple $K\tenO A$-modules. Equivalently, the map $e_\chi\mapsto 
ie_\chi$ is a bijection between primitive idempotents in $Z(K\tenO B)$ 
and $Z(K\tenO A)$. If $U$ is a $B$-lattice with
character $\chi\in$ $\Irr(B)$, then $iU$ is an $A$-lattice such
that $K\tenO iU$ is a simple $K\tenO A$-module corresponding to $\chi$.
This Morita equivalence induces a bijection between the $\CO$-free
quotients of $B$ and of $A$. If $\chi\in$ $\Irr(B)$, then the
$\CO$-free quotient $\OG e_\chi=$ $Be_\chi$ corresponds to the 
$\CO$-free quotient $i\OG ie_\chi =$ $Aie_\chi=$ $Ae_\chi$. Note 
that $Ae_\chi$ is again a $D$-interior $\CO$-algebra, via the 
canonical surjection $A\to$ $Ae_\chi$. Note further that $Ae_\chi$ is 
a direct summand of $Be_\chi$ as an $\OD$-$\OD$-bimodule, since it is 
obtained from multiplying $Be_\chi$ on the left and on the right by 
the idempotent $ie_\chi$ in $(Be_\chi)^D$.
The next result shows that anchors of $\chi$ can be characterised in 
terms of the order $Ae_\chi$. This is based on a variation of 
standard arguments, similar to those used in \cite[\S 6]{Li}, 
identifying vertices of modules at the source algebra level.

\begin{thm} \label{sourcealgebra}
Let $G$ be a finite group, $B$ a block of $\OG$, $D$ a defect
group of $B$, and $i\in$ $B^D$ a source idempotent. Set $A=$ $iBi$.
Let $\chi\in$ $\Irr(B)$. 

\smallskip\noindent (i)
Let $Q$ be a $p$-subgroup of $G$ such that $(Be_\chi)(Q)\neq$ $0$.
Then there is $x\in$ $G$ such that ${^xQ}\leq$ $D$ and such
that $(Ae_\chi)({^xQ})\neq$ $\{0\}$.

\smallskip\noindent (ii)
Let $Q$ be a subgroup of $D$ of maximal order subject to 
$(Ae_\chi)(Q)\neq$ $\{0\}$. Then $Q$ is an anchor of $\chi$.
\end{thm}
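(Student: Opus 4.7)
The plan is to combine the $G$-equivariant direct-sum decomposition $B=Be_\chi\oplus B(1-e_\chi)$ (coming from $e_\chi$ being central and $G$-fixed) with the standard source algebra correspondence for local pointed groups on $B$. The decomposition persists on $Q$-fixed points and on relative trace ideals for any $p$-subgroup $Q\leq G$, giving a product decomposition of $k$-algebras
\[ B(Q)=(Be_\chi)(Q)\times(B(1-e_\chi))(Q), \]
in which $(Be_\chi)(Q)$ appears as a direct factor. This is what will control the relationship between the Brauer quotients of the four algebras $B$, $Be_\chi$, $A$ and $Ae_\chi$.

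For part~(i), the decomposition immediately gives $(Be_\chi)(Q)\neq 0\Rightarrow B(Q)\neq 0$, so $Q$ is $G$-subconjugate to $D$; after conjugation, I assume $Q\leq D$. Since the non-zero factor $(Be_\chi)(Q)$ of $B(Q)$ contains non-zero primitive idempotents, there is a local point $\mu$ of $Q$ on $B$ with a primitive idempotent $j\in\mu$ whose Brauer image $\Br_Q(j)$ lies in $(Be_\chi)(Q)$. By the source algebra correspondence (cf.~\cite[3.5]{Puigpointed}), there exists $x\in G$ with ${^xQ}\leq D$ and a primitive idempotent $j'\in{^x\mu}$ in $iB^{{^xQ}}i=A^{{^xQ}}$. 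Since $j'$ is $B^{{^xQ}}$-conjugate to ${^xj}$, its Brauer image still lies in the $(Be_\chi)$-factor of $B({^xQ})$; and as $(Ae_\chi)^{{^xQ}}_R\subseteq(Be_\chi)^{{^xQ}}_R$ for every proper subgroup $R<{^xQ}$, the non-vanishing of this image forces $j'e_\chi\in(Ae_\chi)^{{^xQ}}$ to have non-zero Brauer image in $(Ae_\chi)({^xQ})$, which proves~(i).

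For part~(ii), anchors of $\chi$ are by definition the defect groups of the primitive $G$-interior $\CO$-algebra $Be_\chi$; by the general theory of $G$-algebras these are characterised, up to $G$-conjugacy, as the subgroups $P$ of maximal order admitting a local point on $Be_\chi$, i.e.\ satisfying $(Be_\chi)(P)\neq 0$. For any such $P$, part~(i) yields $y\in G$ with ${^yP}\leq D$ and $(Ae_\chi)({^yP})\neq 0$, so the maximality of $|Q|$ in $D$ forces $|P|=|{^yP}|\leq|Q|$; conversely the inclusion $Ae_\chi\subseteq Be_\chi$ gives $(Ae_\chi)(Q)\neq 0\Rightarrow(Be_\chi)(Q)\neq 0$, so $|Q|$ is at most the order of an anchor. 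Equality of orders together with the existence of a local point of $Q$ on $Be_\chi$ then identifies $Q$ as a defect group of $Be_\chi$, hence as an anchor of $\chi$.

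The main technical obstacle I anticipate is the compatibility check at the end of part~(i): verifying that the representative $j'\in A^{{^xQ}}$ produced by the source algebra correspondence gives a non-zero element of $(Ae_\chi)({^xQ})$ after passage through the $e_\chi$-quotient, rather than being annihilated in the Brauer quotient. This reduces to the inclusion of trace ideals $(Ae_\chi)^{{^xQ}}_R\subseteq(Be_\chi)^{{^xQ}}_R$ combined with the observation that the $(Be_\chi)$-summand of $B({^xQ})$ is preserved by the source-algebra cut-off $b\mapsto ibi$; the rest of the argument is formal bookkeeping with pointed groups.
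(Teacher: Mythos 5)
Your argument rests on the opening claim that $B=Be_\chi\oplus B(1-e_\chi)$ is a $G$-equivariant direct-sum decomposition of $B$, from which you deduce $B(Q)=(Be_\chi)(Q)\times(B(1-e_\chi))(Q)$ with $(Be_\chi)(Q)$ a direct factor. This is false in general: $e_\chi$ is a central idempotent of $KG$, not of $\OG$ (it lies in $\OG$ only when $\chi$ has defect zero), so $Be_\chi$ is an $\CO$-free \emph{quotient} of $B$ via $b\mapsto be_\chi$, not a direct summand or even a subalgebra. The decomposition also cannot be rescued at the level of fixed points or Brauer quotients: the inclusion $B^Qe_\chi\subseteq(Be_\chi)^Q$ can be strict (this is exactly the phenomenon highlighted in the remark following Theorem \ref{anchors-omnibus1}), so the map $B(Q)\to(Be_\chi)(Q)$ induced by the quotient map need not be surjective and $(Be_\chi)(Q)$ is in no natural way an ideal factor of $B(Q)$. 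Since this decomposition is, by your own account, what ``controls the relationship between the Brauer quotients,'' the step in (i) where you produce a local point $\mu$ of $Q$ on $B$ whose idempotents have ``Brauer image lying in $(Be_\chi)(Q)$'' has no meaning as stated, and the same applies to the later claim that $j'$ ``still lies in the $(Be_\chi)$-factor of $B({}^xQ)$.''

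The correct mechanism, and the one the paper uses, goes through the quotient map rather than a splitting: since $(Be_\chi)(Q)\neq 0$ is equivalent to $\Br_Q^{Be_\chi}(e_\chi)\neq 0$, a primitive decomposition of $1_B$ in $B^Q$ yields a primitive idempotent $j\in B^Q$ with $\Br_Q^{Be_\chi}(je_\chi)\neq 0$; because the surjection $B\to Be_\chi$ carries relative trace ideals into relative trace ideals, it carries $\ker(\Br_Q^B)$ into $\ker(\Br_Q^{Be_\chi})$, so necessarily $\Br_Q^B(j)\neq 0$ and $j$ belongs to a local point $\delta$ of $Q$ on $B$; conjugating $Q_\delta$ into $D_\gamma$ and choosing the representative inside $A^{{}^xQ}$ then gives $je_\chi\in(Ae_\chi)^{{}^xQ}$ with nonzero Brauer image (your trace-ideal inclusion $(Ae_\chi)^{{}^xQ}_R\subseteq(Be_\chi)^{{}^xQ}_R$ is indeed the right observation for this last step). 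A secondary gap: in (ii) you justify $(Ae_\chi)(Q)\neq 0\Rightarrow(Be_\chi)(Q)\neq 0$ by ``the inclusion $Ae_\chi\subseteq Be_\chi$,'' but a mere inclusion of $G$-algebras does not transfer nonvanishing of Brauer quotients (e.g.\ $\CO\cdot 1\subseteq\End_\CO(L)$ with $L$ projective); what is needed is that $Ae_\chi=ie_\chi(Be_\chi)ie_\chi$ is cut out by the $D$-fixed idempotent $ie_\chi$, equivalently is a direct summand of $Be_\chi$ as an $\OD$-$\OD$-bimodule, so that $(Ae_\chi)(Q)=\Br_Q(ie_\chi)(Be_\chi)(Q)\Br_Q(ie_\chi)$ embeds in $(Be_\chi)(Q)$. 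With these repairs the remaining bookkeeping in your part (ii) (maximal local pointed groups are defect pointed groups, hence anchors) is fine and matches the paper.
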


\begin{proof}
We use basic properties of local pointed groups; see e. g. 
\cite[\S 18]{T} for an expository account of this material.
Let $\gamma$ be the local point of $D$ on $B$ containing $i$.
Let $Q$ be a $p$-subgroup of $G$ such that $(Be_\chi)(Q)\neq$ $\{0\}$.
Since $e_\chi$ is the unit element of $Be_\chi$, this
is equivalent to $\Br^{Be_\chi}_Q(e_\chi)\neq$ $0$. By considering
a primitive decomposition of $1_B$ in $B^Q$ it follows that there
is a primitive idempotent $j\in$ $B^Q$ such that 
$\Br_{Q}^{Be_{\chi}}(je_\chi)\neq$ $0$. Then necessarily also 
$\Br^B_Q(j)\neq$ $0$,
because the canonical map $B\to$ $Be_\chi$ sends $\ker(\Br_Q^B)$
to $\ker(\Br_Q^{Be_\chi})$. Thus $j$ belongs to a local point $\delta$ 
of $Q$ on $B$. Since the maximal local pointed groups on $B$ are
all $G$-conjugate, it follows that there is $x\in$ $G$ such that
${^xQ_\delta}\leq$ $D_\gamma$. In other words, after replacing
$Q_\delta$ by a suitable $G$-conjugate, we may assume that
$Q_\delta\leq$ $D_\gamma$, and hence that $j\in$ $A^Q$ for some
choice of $j$ in $\delta$. For this choice of $j$, we have 
$je_\chi\in$ $Ae_\chi$. Thus the condition 
$\Br^{Be_\chi}_Q(je_\chi)\neq$ $0$ is equivalent to 
$\Br^{Ae_\chi}_Q(je_\chi)\neq$ $0$; we use here the
fact, mentioned above, that $Ae_\chi$ is a direct summand of $Be_\chi$ 
as an $\OP$-$\OP$-bimodule. In particular, we have $(Ae_\chi)(Q)\neq$ 
$\{0\}$.  This proves (i). For (ii), let $Q$ be a subgroup of $D$ such 
that $(Ae_\chi)(Q)\neq$ $\{0\}$ and such that the order of $Q$ is 
maximal with respect to this property. Then $(Be_\chi)(Q)\neq$ 
$\{0\}$, and hence $Q$ is contained in an anchor $R$ of $\chi$. By 
(i), there is $x\in$ $G$ such that ${^xR}\leq$ $D$ and such that 
$(Ae_\chi)({^xR)}\neq$ $\{0\}$.
The maximality of $|Q|$ forces $Q=$ $R$, whence the result.
\end{proof}

Theorem \ref{sourcealgebra} implies that anchors are invariant under 
source algebra equivalences. By a result of Scott \cite{Scottnotes}
and Puig \cite[7.5.1]{Puigbook}, 
an isomorphism between source algebras is equivalent to a Morita
equivalence given by a bimodule with a trivial source (see also
\cite[\S 4]{Lisplendid} for an expository account). In order to 
extend the invariance of anchors to Morita equivalences given by
bimodules with endopermutation source, we need to describe these 
Morita equivalences at the source algebra level. Let $G$, $G'$ be finite 
groups, and let $B$, $B'$ be blocks of $\OG$, $\OG'$ with defect 
groups $D$, $D'$, respectively. By results of Puig in 
\cite[\S 7]{Puigbook}, 
a Morita equivalence between $B$ and $B'$ given by a bimodule with 
endopermutation source implies an identification $D=D'$ such that for 
some choice of source idempotents $i\in$ $B^D$, $i'\in$ $(B')^D$, 
setting $A=$ $iBi$ and $A'=$ $i'B'i'$, we have $D$-interior $\CO$-algebra 
isomorphisms
$$A'\cong e(S\tenO A)e\ ,\ \ \ A\cong e'(S^\op\tenO A')e'\ ,$$
where $S=$ $\End_\CO(V)$ for some indecomposable endopermutation
$\OD$-module $V$ with vertex $D$, and where $e$, $e'$ are primitive
idempotents in $(S\tenO A)^D$, $(S^\op\tenO A')^D$,
respectively, satisfying $\Br_D(e)\neq$ $0$, $\Br_D(e')\neq$ $0$.
These isomorphisms induce inverse equivalences between 
$\mod(A)$ and $\mod(A')$, sending an $A$-module $U$ to the
$A'$-module $e(V\tenO U)$, and an $A'$-module $U'$ to the
$A$-module $e'(V^*\tenO U')$. Here $V^*$ is the $\CO$-dual
of $V$; note that $\End_\CO(V^*)\cong$ $S^\op$ as $D$-interior
$\CO$-algebras. 

\begin{proof}[{Proof of Theorem \ref{omni-source}}]
We use the notation above.
Let $\chi\in$ $\Irr(B)$ and $\chi'\in$ $\Irr(B')$ such
that $\chi$ and $\chi'$ correspond to each other through the
Morita equivalence $\mod(B)\cong\mod(A)\cong\mod(A')\cong
\mod(B')$ described above. As mentioned at the beginning of this 
section, the primitive idempotent in $Z(K\tenO A)$ 
corresponding to $\chi$ is $ie_\chi$. Similarly, the primitive 
idempotent in $Z(K\tenO A')$ is equal to $i'e_{\chi'}$. The explicit
description of the Morita equivalence between $A$ and $A'$ above
implies that we have
$$i'e_{\chi'} = e\cdot(1_S\ten ie_\chi)$$
$$ie_{\chi} = e'\cdot(1_{S^\op}\ten i'e_{\chi'})$$
where these equalities are understood in the algebras $K\tenO A$ and
$K\tenO A'$. By Theorem \ref{sourcealgebra},
it suffices to show that for $Q$ a subgroup of $D$, we have 
$(Ae_\chi)(Q)\neq$  $\{0\}$ if and only if $(A'e_{\chi'})(Q)\neq$ 
$\{0\}$. It suffices to show one implication, because the other 
follows then from exchanging the roles of $A$ and $A'$. Thus it 
suffices to show that if $(Ae_\chi)(Q)=$ $\{0\}$,
then $(A'e_{\chi'})'(Q)=$ $\{0\}$. Let $Q$ be a subgroup of $D$ such 
that $(Ae_\chi)(Q)=$ $\{0\}$. We have $(S\tenO A)(1_S\ten ie_\chi)=$ 
$S\tenO Ae_\chi$. 
Since $S$ has a $D$-stable basis, it follows
from \cite[5.6]{Punil} that $(S\tenO Ae_\chi)(Q)=$ 
$S(Q)\tenk (Ae_\chi)(Q)=$ $\{0\}$.
Since $A'e_{\chi'}$ is obtained from $S\tenO Ae_\chi$ by left and 
right multiplication with the idempotent $e$, it follows that
$(A'e_{\chi'})(Q)=$ $\{0\}$ as required. 
\end{proof}

\section{Anchors and normal subgroups} \label{normalSection}

We prove in this section  some results on anchors of characters which
are induced from a normal subgroup  or inflated from quotients.
Since an anchor of an irreducible character $\chi$ contains a vertex 
of any lattice affording $\chi$, constructing suitable lattices
is one of the tools for getting lower bounds on anchors.
The following is well-known (we include a proof for the convenience
of the reader).

\begin{lem} \label{simpleheadLemma}
Let $G$ be a finite group and $\chi\in$ $\Irr(G)$. Let
$S$ be a simple $kG$-module with Brauer character $\varphi$
such that $d_{\chi \varphi}\neq$ $0$. Then there exists 
an $\OG$-lattice $L$ affording $\chi$ such that
$L$ has a unique maximal submodule $M$, and such that $L/M\cong$
$S$.
\end{lem}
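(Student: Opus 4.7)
The plan is to realise $L$ as a suitable quotient of a projective $\OG$-cover of $S$. Let $P$ denote a projective cover of $S$ as an $\OG$-module; then $P$ is indecomposable, $J(\OG)P$ is its unique maximal submodule, and $P/J(\OG)P \cong S$. The character of the $KG$-module $K \tenO P$ is $\sum_{\chi' \in \Irr(G)} d_{\chi',\varphi}\,\chi'$ by the standard relation between projective covers and decomposition numbers, so the hypothesis $d_{\chi,\varphi} \neq 0$ ensures that $M_\chi$ is a direct summand of $K \tenO P$.

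Next I would fix a surjective $KG$-homomorphism $\pi \colon K \tenO P \twoheadrightarrow M_\chi$ and define $L = \pi(P) \subseteq M_\chi$. Since $P$ is finitely generated over $\CO$, so is $L$; being a torsion-free finitely generated module over the discrete valuation ring $\CO$, $L$ is $\CO$-free, hence an $\OG$-lattice. The $K$-span of $L$ inside $M_\chi$ equals $\pi(K \tenO P) = M_\chi$, so the canonical map $K \tenO L \to M_\chi$ is an isomorphism and $L$ affords $\chi$.

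Finally, the restriction $\pi|_P \colon P \twoheadrightarrow L$ is a surjection onto a nonzero module. Any proper submodule of $L$ pulls back to a proper submodule of $P$, which is therefore contained in $J(\OG)P$; pushing forward, every proper submodule of $L$ is contained in $M := \pi(J(\OG)P)$. Thus $M$ is the unique maximal submodule of $L$, and $L/M$ is a nonzero quotient of $P/J(\OG)P \cong S$, hence isomorphic to $S$.

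The construction is essentially formal once the projective cover is in hand; the only point that needs care is verifying that $\pi(P)$ is simultaneously an $\OG$-lattice and a full $\CO$-form of $M_\chi$, which is where one uses that $\CO$ is a discrete valuation ring and that $\pi$ is $K$-linear and surjective. Beyond that, uniqueness of the maximal submodule of $L$ follows directly from the uniqueness of $J(\OG)P$ in the indecomposable projective $P$, so no serious obstacle is expected.
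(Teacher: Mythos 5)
Your proof is correct and is essentially the paper's argument: both realise $L$ as a quotient of the projective cover $\OG i$ of $S$ by an $\CO$-pure submodule (your $\ker(\pi|_P)$ is automatically $\CO$-pure since $L$ embeds in the torsion-free $M_\chi$), and both deduce the unique maximal submodule from that of the projective indecomposable. The only point stated without justification — that $M=\pi(J(\OG)P)=J(\OG)L$ is proper in $L$ — is immediate from Nakayama's lemma.
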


\begin{proof}
Let $i$ be a primitive idempotent in $\OG$ such that $\OG i$ is
a projective cover of $S$. Since $\chi(i)=$ $d_{\chi \varphi}\neq$ $0$,
there is an $\CO$-pure submodule $L'$ of $\OG i$ such that 
$L=$ $\OG i/L'$ affords $\chi$. Since the projective indecomposable
$\OG$-module $\OG i$ has a unique maximal submodule and $S$ is 
its unique simple quotient, it follows that the image, denoted 
$M$, in $L$ of the unique maximal submodule of $\OG i$ is the
unique maximal submodule of $L$, and satisfies $L/M\cong S$.
\end{proof}

In \cite{P},  Plesken showed that if $G$ is a $p$-group and $\chi$ is 
an irreducible character  of $G$, then there exists  an  $\OG$-lattice 
affording $\chi$ whose vertex is $G$. Our next result is a slight 
variation on this theme.

\begin{pro} \label{GN1pro}
Let $G$ be a finite group, $N$ a normal subgroup of $G$ of 
$p$-power index, and $\chi\in$ $\Irr(G)$. If there exists $\varphi\in$ 
$\IBr(G)$ of degree not divisible by $|G:N|$ and such that 
$d_{\chi, \varphi}\neq$ $0$, then there exists an $\OG$-lattice $L$ 
with character $\chi$ which is not relatively $\ON$-projective.
In particular, in that case, $N$ does not contain the anchors
of $\chi$.
\end{pro}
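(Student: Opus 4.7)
The plan is to apply Lemma \ref{simpleheadLemma} to the given $\varphi$: this produces an $\OG$-lattice $L$ affording $\chi$ whose unique maximal submodule $M$ satisfies $L/M \cong S$, where $S$ is the simple $kG$-module of Brauer character $\varphi$. I will argue by contradiction that this specific $L$ cannot be relatively $\ON$-projective. The ``in particular'' assertion is then immediate from Theorem \ref{anchors-omnibus1}(b), since an anchor of $\chi$ contains a vertex of every $\OG$-lattice affording $\chi$, and so any anchor of $\chi$ will contain a vertex of our $L$ which is not contained in $N$.

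Suppose, for contradiction, that $L$ is relatively $\ON$-projective. Since $\chi$ is irreducible, $K\tenO L$ is simple, $\End_{\OG}(L)$ is local, and hence $L$ is absolutely indecomposable. Because $N\trianglelefteq G$ with $[G:N]$ a $p$-power, Green's indecomposability theorem forces $L\cong \Ind_N^G V$ for some absolutely indecomposable $\ON$-lattice $V$; moreover $K\tenO V$ is irreducible, affords a character $\chi_V\in\Irr(N)$ whose $G$-inertia is exactly $N$, and $\chi = \Ind_N^G \chi_V$, so $\chi^\circ = \Ind_N^G \chi_V^\circ$. Now, since $H^2(Q,k^\times)=0$ for any $p$-group $Q$, Clifford theory for the $p$-power-index normal subgroup $N$ in characteristic $p$ takes a clean form: for each $\theta'\in\IBr(N)$ with inertia $G_{\theta'}$ in $G$, the unique simple $kG$-module lying above $\theta'$ is $\varphi_{\theta'} = \Ind_{G_{\theta'}}^G \tilde\theta'$, with $\varphi_{\theta'}(1) = [G:G_{\theta'}]\,\theta'(1)$, and $\Ind_N^G \theta' = [G_{\theta'}:N]\,\varphi_{\theta'}$ as Brauer characters of $G$. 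Writing $\varphi = \varphi_\theta$ for a suitable $\theta\in\IBr(N)$, we have $\varphi(1) = [G:G_\theta]\,\theta(1)$, so $|G:N|\mid\varphi(1)$ if and only if $[G_\theta:N]\mid\theta(1)$.

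Expanding $\chi^\circ = \sum_{\theta'\in\IBr(N)} d_{\chi_V,\theta'}\,[G_{\theta'}:N]\,\varphi_{\theta'}$ and reading off the coefficient of $\varphi$ gives $d_{\chi,\varphi} = [G_\theta:N]\cdot \sum_{\theta'\in G\cdot\theta} d_{\chi_V,\theta'}$, from which it follows that each summand $d_{\chi,\varphi'}\varphi'(1)$ of $\chi(1) = \sum_{\varphi'} d_{\chi,\varphi'}\varphi'(1)$ is divisible by $|G:N|$. The main obstacle---and the crux of the argument---will be to upgrade this into the sharper statement $[G_\theta:N]\mid\theta(1)$, equivalently $|G:N|\mid\varphi(1)$, which contradicts the hypothesis. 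I expect this to follow from a combined analysis of the head structure of $\bar V$ imposed by $\bar L \cong \Ind_N^G \bar V$ having simple head $S$, together with the inertial constraint that $\chi_V$ has $G$-inertia equal to $N$; ruling out the only way the divisibility could fail (namely a mismatch between the $p$-parts of $\theta(1)$ and $[G_\theta:N]$) is where the real work lies.
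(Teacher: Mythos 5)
Your reduction of the ``in particular'' clause to Theorem \ref{anchors-omnibus1}(b) is fine, but the main argument has a gap exactly where you locate it, and that gap cannot be closed: the statement you are aiming for --- that the specific lattice $L$ furnished by Lemma \ref{simpleheadLemma} is never relatively $\ON$-projective --- is false. Take $G$ extraspecial of order $p^3$, $N$ an abelian maximal subgroup, and $\lambda\in\Irr(N)$ linear and nontrivial on $Z(G)$, so that $\chi=\Ind^G_N(\lambda)$ is irreducible of degree $p$. Here $\IBr(G)$ consists only of the trivial Brauer character $\varphi$, so $\varphi(1)=1$ is not divisible by $|G:N|=p$ and $d_{\chi,\varphi}=\chi(1)=p\neq 0$; the hypotheses hold. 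The lattice $L=\Ind^G_N(\CO_\lambda)$ affords $\chi$ and satisfies $k\tenO L\cong k[G/N]$, hence has a unique maximal submodule with simple quotient $S=k$; being cyclic, it is a quotient of $\OG$ by an $\CO$-pure submodule, so it is a legitimate output of Lemma \ref{simpleheadLemma} --- yet it is relatively $\ON$-projective by construction. This also shows that the divisibility $|G:N|\mid\varphi(1)$ you hope to extract cannot follow from ``$L\cong\Ind^G_N(V)$ with simple head $S$'': your Clifford-theoretic computation correctly yields that $[G_\theta:N]$ divides $d_{\chi,\varphi}$ and that $|G:N|$ divides $d_{\chi,\varphi}\varphi(1)$, and both are satisfied in the example ($d_{\chi,\varphi}=p$, $\varphi(1)=1$), so no ``upgrade'' is available.

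The paper's proof sidesteps this by working with the pair consisting of $L$ and its unique maximal submodule $M$: both afford $\chi$, and one shows they cannot \emph{both} be relatively $\ON$-projective. If $L\cong\Ind^G_N(Z)$ and $M\cong\Ind^G_N(U)$ with $Z$, $U$ indecomposable $\ON$-lattices, then (since $\chi$ is irreducible and induced, its $N$-constituents are pairwise distinct) one may arrange that $Z$ and $U$ afford the same character, identify them with the unique $\CO$-pure summands of $\Res^G_N(L)$ and $\Res^G_N(M)$ affording that character, and deduce that the inclusion $M\subseteq L$ is induced from an inclusion $U\subseteq Z$. Then $\varphi(1)=\dim_k(S)=\dim_k(L/M)=|G:N|\cdot\dim_k(Z/U)$, contradicting the hypothesis. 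The conclusion is only that \emph{one of} $L$ or $M$ fails to be relatively $\ON$-projective --- which is all the proposition asserts; in the example above it is $M$, not $L$, that does the job. You should restructure your argument along these lines rather than trying to pin the failure on $L$ itself.
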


\begin{proof}
Let $S$ be a simple $kG$-module with Brauer character $\varphi$ of
degree not divisible by $|G:N|$ and such that $d_{\chi, \varphi}\neq$ 
$0$.  By \ref{simpleheadLemma} there exists an $\OG$-lattice $L$
with a unique maximal submodule $M$ such that $\chi$ is the
character of $L$ and such that $L/M\cong$ $S$. Note that the
character of $M$ is also equal to $\chi$.
We will show that one of $M$ or $L$ is not relatively $\ON$-projective.
Arguing by contradiction, suppose that $L$ and $M$ are relatively
$\ON$-projective. By Green's indecomposability theorem, there
are indecomposable $\ON$-modules $Z$ and $U$ such that
$L\cong$ $\Ind^G_N(Z)$ and $M\cong$ $\Ind^G_N(U)$. 
Then $\chi=$ $\Ind^G_N(\tau)$, where $\tau$ is the character of
$Z$.  Since $\chi=$ $\Ind^G_N(\tau)$ is irreducible, it follows
that the different $G$-conjugates ${^x\tau}$ of $\tau$, with
$x$ running over a set of representatives $\CR$ of $G/N$ in $G$, are
pairwise different. Similarly, $\chi=$ $\Ind^G_N(\tau')$, where
$\tau'$ is the character of $U$. After replacing $U$ by ${^xU}$
for a suitable element $x\in$ $G$, we may assume that $\tau'=$ $\tau$.
By the above, we have $\Res^G_N(L)\cong$ $\bigoplus_{x\in\CR} {^xZ}$,
and the characters of these summands are the pairwise different
conjugates ${^x\tau}$ of $\tau$.
In particular, $\Res^G_N(L)$ has a unique $\CO$-pure summand with 
character $\tau$, and this summand is isomorphic to $Z$.  We denote 
this summand abusively again by $Z$. Similarly, $\Res^G_N(M)$ has a 
unique $\CO$-pure summand, abusively again denoted by $U$, with 
character $\tau$. Since $M\subseteq$ $L$ induces an equality
$K\tenO M=$ $K\tenO L$, it follows that $K\tenO U=$ $K\tenO Z$.
Moreover, we have $U\subseteq$ 
$K\tenO Z\cap L=$ $Z$, where the second equality holds as $Z$ is
$\CO$-pure in $L$. Thus the inclusion $M\subseteq$ $L$ is obtained
from inducing the inclusion map $U\subseteq$ $Z$ from $N$ to $G$. 
By the construction of $M$, the inclusion 
$M\subseteq$ $L$ induces a map $k\tenO M\to$ $k\tenO L$ with cokernel
$S$. Thus we have
$$\dim_k(S)=\codim(k\tenO M\to k\tenO L)=
|G:N|\codim(k\tenO U\to k\tenO Z)\ .$$
This contradicts the assumption that $\varphi(1)$ is not
divisible by $|G:N|$.
Thus one of $L$ or $M$ is not relatively $\ON$-projective.
\end{proof}

\begin{cor} \label{GN1cor}
Let $G$ be a finite group, $N$ a normal subgroup of $p$-power 
index, and let $\chi\in$ $\Irr(G)$. Let $P$ be an anchor of $\chi$.
If there exists $\varphi\in$ $\IBr(G)$
of degree prime to $p$ such that $d_{\chi \varphi}\neq$ $0$, then
$G=$ $PN$.
\end{cor}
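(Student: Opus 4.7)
The plan is to derive a contradiction from assuming $G\neq PN$ by applying Proposition~\ref{GN1pro} to a suitably chosen normal subgroup $M$ of $G$ containing $PN$. The subtle point is that $PN$ itself need not be normal in $G$ (the quotient $G/N$ is a $p$-group but need not be abelian), so I cannot simply feed $PN$ into the Proposition directly. Instead, I use the fact that $G/N$ is a finite $p$-group: the image $PN/N$ is a proper subgroup of $G/N$ (under our contradiction hypothesis), and every proper subgroup of a nontrivial finite $p$-group is contained in a maximal subgroup, which is automatically normal of index $p$. Pulling back, this produces a subgroup $M\leq G$ with $PN\leq M\lneq G$, with $M$ normal in $G$ and $|G:M|=p$.

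Next I verify that the hypotheses of Proposition~\ref{GN1pro} hold with $N$ replaced by $M$. The subgroup $M$ is normal in $G$ of $p$-power index (in fact index exactly $p$). The hypothesized $\varphi\in\IBr(G)$ has $\varphi(1)$ prime to $p$, hence not divisible by $|G:M|=p$, and it still satisfies $d_{\chi,\varphi}\neq 0$ since this hypothesis is intrinsic to $G$ and does not depend on which normal subgroup we consider. The conclusion of Proposition~\ref{GN1pro} then asserts that $M$ does not contain any anchor of $\chi$. But by assumption $P$ is an anchor of $\chi$ and $P\leq PN\leq M$, which is the desired contradiction. Hence $G=PN$.

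The only real obstacle I anticipate is the one already identified: one cannot apply the Proposition directly to $PN$, so one must first enlarge $PN$ to a normal subgroup of $p$-power index via a maximal-subgroup argument inside the $p$-group $G/N$. Once that observation is made, the rest is a routine check of the hypotheses of Proposition~\ref{GN1pro} followed by the clean contradiction that an anchor $P\leq M$ cannot exist when the Proposition forbids anchors inside $M$.
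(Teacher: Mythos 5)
Your argument is correct and is essentially the same as the paper's: both assume $PN\lneq G$, pass to a normal subgroup $M$ of index $p$ containing $PN$ (using that maximal subgroups of the $p$-group $G/N$ are normal of index $p$), and then apply Proposition~\ref{GN1pro} to $M$ to contradict the fact that the anchor $P$ (hence a vertex of every lattice affording $\chi$) lies in $M$. Your explicit justification of why $PN$ must first be enlarged to a normal subgroup is exactly the step the paper compresses into one sentence.
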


\begin{proof}
Arguing by contradiction, suppose that $PN$ is a proper subgroup
of $G$. Since $G/N$ is a $p$-group, it follows that $PN$ is
contained in a normal subgroup $M$ of index $p$ in
$G$. Then $M$ contains every anchor of $\chi$, hence $M$ contains
the vertices of any $\OG$-lattice affording $\chi$. Let 
$\varphi\in$ $\IBr(G)$ such that $\varphi(1)$ is prime to $p$ and
such that $d_{\chi, \varphi}\neq$ $0$. Proposition
\ref{GN1pro} implies however that $|G:M|=$ $p$ divides $\varphi(1)$,
a contradiction.
\end{proof}

We record here an extension of ~\cite[Lemma 3]{P} which will be used 
in the next section.

\begin{pro}  \label{pro:ples}
Let $G$ be a finite group, $P$ a Sylow $p$-subgroup, and $\chi\in$
$\Irr(G)$. Suppose that $\Res^G_P(\chi)$ is irreducible  
and that there exists an irreducible Brauer character $\varphi$ of 
$p'$-degree of $G$ such that $d_{\chi,\varphi} \ne 0 $.  
Then there exists an $\OG$-lattice $L$ affording $\chi$ with vertex 
$P$. In particular, the Sylow $p$-subgroups of $G$ are the anchors 
of $\chi$.
\end{pro}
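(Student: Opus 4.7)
The plan is to produce $L$ via Lemma~\ref{simpleheadLemma} with simple head $S$ corresponding to $\varphi$, and to argue that for an appropriate choice $L$ has vertex $P$.  First, since $\Res^G_P(\chi)$ is irreducible and $P$ is a $p$-group (so irreducible $KP$-characters have $p$-power degree), $\chi(1)=p^a$ for some $a\ge 0$; and since $\varphi(1)\mid\chi(1)=p^a$ with $\gcd(\varphi(1),p)=1$, we get $\varphi(1)=1$.  Hence $S$ is one-dimensional and $\Res^G_P(S)\cong k$ is the trivial $kP$-module.

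Set $M:=\Res^G_P(L)$.  Since $K\tenO M$ is the simple $KP$-module with character $\Res^G_P(\chi)$, $M$ is an indecomposable $\OP$-lattice, and the surjection $L\twoheadrightarrow S$ restricts to a surjection $M\twoheadrightarrow k$.  The central step is to choose $L$ so that $M$ has vertex $P$: by Plesken's \cite[Lemma~3]{P} there exists an $\OP$-lattice in the simple $KP$-module $\Res^G_P V_\chi$ affording $\Res^G_P(\chi)$ with vertex $P$; by exploiting the flexibility in the construction of \ref{simpleheadLemma} (the choice of $\CO$-pure submodule in the projective cover of $S$), I would arrange $\Res^G_P(L)$ to realize such a Plesken-type lattice.

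Once $M$ has vertex $P$, a Mackey-decomposition argument forces the vertex of $L$ to equal $P$: if $Q\le P$ is a vertex of $L$ and $L$ is a summand of $\Ind^G_Q(W)$ for an indecomposable $\OQ$-lattice $W$, then $M$ is a summand of
\[\Res^G_P\Ind^G_Q(W)\;=\;\bigoplus_{g\in P\backslash G/Q}\Ind^P_{P\cap{}^gQ}({}^gW),\]
and the indecomposability of $M$ (combined with Green's indecomposability theorem for the $p$-group $P$) forces $M\cong\Ind^P_{P\cap{}^gQ}({}^gW)$ for some $g$.  The vertex of $M$ is then contained in $P\cap{}^gQ\le{}^gQ$, so $|P|\le|Q|$, giving $Q=P$.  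The ``in particular'' assertion follows from Theorem~\ref{anchors-omnibus1}(b): the anchor of $\chi$ contains the vertex of every $\OG$-lattice affording $\chi$ and hence contains $P$; being a $p$-subgroup, it equals $P$.  The main obstacle is matching $\Res^G_P(L)$ with a Plesken-type lattice of vertex $P$, since not every $\OP$-lattice affording $\Res^G_P(\chi)$ has vertex $P$ --- one must exploit the one-dimensional head of $L$ to force cyclicity of the restriction and then recognise it as Plesken's construction.
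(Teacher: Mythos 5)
There is a genuine gap at what you yourself call ``the main obstacle'': you never actually produce a lattice $L$ whose restriction to $P$ has vertex $P$. Invoking Plesken to get \emph{some} $\OP$-lattice of full vertex inside the simple $KP$-module is not enough, because there is no reason that such a lattice is realised as $\Res^G_P(L)$ for a $G$-lattice $L$, and the ``flexibility'' in Lemma~\ref{simpleheadLemma} (the choice of pure submodule of the projective cover) is not shown to give you any control over the isomorphism type of the restricted lattice. Since everything else in your argument (the Mackey step, the ``in particular'') is routine, this unproved step is the entire content of the proposition. A secondary error: your deduction $\varphi(1)=1$ rests on the claim $\varphi(1)\mid\chi(1)$, which is not a theorem --- the degree of an irreducible Brauer constituent need not divide $\chi(1)$; all that the hypothesis gives you is that $\varphi(1)=\dim_k S$ is prime to $p$. (Having a trivial one-dimensional head would not force full vertex anyway: $kP$ is local, so every nonzero $kP$-module, including any induced one, has trivial modules in its head.)

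The paper's proof resolves the obstacle differently, and this is the idea you are missing: it works with \emph{two} lattices. Take $L$ as in Lemma~\ref{simpleheadLemma} with simple head $S$, and let $N$ be its unique maximal submodule; then $N$ also affords $\chi$, and the product of the invariant factors of $L/N$ is $\pi^{\dim_k S}=\pi^{\varphi(1)}$ with $\varphi(1)$ prime to $p$. If $\Res^G_P(L)$ already has vertex $P$, take $L$. Otherwise Green's indecomposability theorem forces $\Res^G_P(L)=\Ind^P_U(M)$ for a proper subgroup $U$, and then Plesken's Lemma~3 --- applied to the inclusion $\Res^G_P(N)\subset\Res^G_P(L)$ with the index $\pi^{\varphi(1)}$ not a $p$-th power of $\pi$ --- shows that $\Res^G_P(N)$ has vertex $P$, so $N$ works. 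This is where the $p'$-degree hypothesis on $\varphi$ enters; it is not used to make $S$ one-dimensional. To repair your proof you would need to either supply the missing realisation argument or switch to this two-lattice dichotomy.
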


\begin{proof}    
Let $\pi $ be a generator of $J(\CO)$ and let $S$ be a simple 
$kG$-module with Brauer character  $\varphi $. By 
\ref{simpleheadLemma}, there is an $\OG$-lattice $L$ affording $\chi$ 
such that $k\tenO L$  has a simple head isomorphic to $S$. Let $N$ be 
the maximal submodule of $L$. Then $\pi L \subset N$  and $N$ is an 
$\OG$-lattice affording $\chi$. The invariant factors of the 
$\CO$-module $L/N $ are either $1$ or $\pi$ and the number of 
non-trivial invariant factors of $L/N$ equals $\dim_k(L/N)= 
\dim_k (S)$. Thus the product of the invariant factors of $L/N$ 
equals $\pi^{\dim_k S} $. By hypothesis, $\Res^G_P(L)$ is irreducible.
If $\Res^G_P(L)$ has vertex $P$, then $L$ has vertex $P$.  
So, we may assume that  $\Res^G_P(L)$ is relatively $U$-projective 
for some proper subgroup $U$ of $P$. By Green's indecomposability 
theorem, $\Res^G_P(L) = \Ind_U^P(M)$ for some $\CO U$-lattice $M$. 
By \cite[Lemma 3]{P}, $\Res^G_P(N)$ has vertex $P$, whence $N$ 
has vertex $P$.
Note that \cite[Lemma 3]{P} is stated for $\CO$ a localisation 
of  the $|P|$-th cyclotomic integers, but as remarked in 
\cite[Page 235]{P}, \cite[Lemma 3]{P} remains true in our setting. 
The second assertion of the proposition follows from 
Proposition~\ref{anchorscontainvertices}.
 \end{proof}

\begin{pro} \label{GN2pro}
Let $G$ be a finite group, $N$ a normal subgroup, and let 
$\chi\in$ $\Irr(G)$ such that $\chi=$ $\Ind^G_N(\tau)$
for some $\tau\in$ $\Irr(N)$. Let $V$ be an $\ON$-lattice with
character $\tau$. Suppose that the composition series of the
$kN$-modules $k\tenO {^xV}$, with $x$ running over a set of
representatives of $G/N$ in $G$, are pairwise disjoint.
Then $\OG e_\chi \cong \Ind^G_N(\ON e_\tau)$ as $G$-interior
$\CO$-algebras. In particular, $N$ contains the vertices of all
lattices affording $\chi$, and $N$ contains the anchors of $\chi$.
\end{pro}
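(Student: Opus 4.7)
The plan is to construct an explicit $G$-interior $\CO$-algebra isomorphism
\[
\Phi : \Ind^G_N(\ON e_\tau) \to \OG e_\chi, \qquad
\alpha \otimes v \otimes \beta \;\mapsto\; \alpha v \beta e_\chi.
\]
As a Clifford-theoretic preliminary, I would first observe that since $\chi = \Ind^G_N(\tau)$ is irreducible, the inertia group of $\tau$ in $G$ equals $N$; consequently the $G$-conjugates $\{{^x e_\tau} : x \in [G/N]\}$ of the central primitive idempotent $e_\tau \in Z(KN)$ are pairwise distinct and orthogonal, and their $G$-invariant sum $\tilde e := \sum_{x \in [G/N]} {^x e_\tau}$ lies in $Z(KG)$ and must coincide with $e_\chi$ (there being a unique irreducible character of $G$ above the $G$-orbit of $\tau$).

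I would then verify that $\Phi$ descends to $\OG \otimes_{\ON} \ON e_\tau \otimes_{\ON} \OG$ (a routine check from the $\ON$-bimodule structure on $\ON e_\tau$) and that its image lies inside $\OG e_\chi = \OG \tilde e = \bigoplus_{z \in [G/N]} \OG \cdot {^z e_\tau}$, using $e_\tau \tilde e = e_\tau$. The crucial algebra-homomorphism check concerns the vanishing of off-diagonal terms: for $y, u \in [G/N]$ with $yN \neq uN$ and $v, v' \in \ON e_\tau$, using $v = v e_\tau$ and $v' = e_\tau v'$,
\[
v \cdot (y^{-1} u) \cdot v' \;=\; v \cdot (y^{-1} u) \cdot {^{(y^{-1} u)^{-1}} e_\tau} \cdot e_\tau \cdot v' \;=\; 0
\]
by orthogonality of the distinct idempotents ${^{(y^{-1} u)^{-1}} e_\tau}$ and $e_\tau$ in $Z(KN)$; this matches the Kronecker-delta vanishing in the multiplication rule of $\Ind^G_N(\ON e_\tau)$. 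Compatibility with the $G$-interior structure is verified via $\Phi\bigl(\sum_{x \in [G/N]} gx \otimes e_\tau \otimes x^{-1}\bigr) = \sum_x g \cdot {^x e_\tau} = g \tilde e = g e_\chi$.

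Both $\CO$-orders are free of $\CO$-rank $|G:N|^2 \tau(1)^2 = \chi(1)^2$, so it suffices to prove surjectivity: for any $x, z \in [G/N]$ and $n \in \ON$, writing $xnz = x' n'$ with $x' \in [G/N]$ and $n' \in N$ yields $\Phi(x' \otimes n' e_\tau \otimes z^{-1}) = x n \cdot {^z e_\tau}$, and such elements span $\OG e_\chi$. For the `in particular' consequences, the equivalence of module categories coming from $\Phi$ identifies every $\OG$-lattice $L$ affording $\chi$ with $\Ind^G_N(V')$ for an $\ON$-lattice $V'$ affording $\tau$, so $L$ is relatively $N$-projective and its vertex lies in $N$; and since $1_{\Ind^G_N(\ON e_\tau)} = \Tr^G_N(1 \otimes e_\tau \otimes 1)$ (the element $1 \otimes e_\tau \otimes 1$ being $N$-invariant in the tensor), the defect groups of the induced algebra $\OG e_\chi$, i.e.\ the anchors of $\chi$, are also contained in $N$. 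The main obstacle lies in the algebra-homomorphism verification: tracking how the twisted multiplication on $\Ind^G_N(\ON e_\tau)$, dictated by the $N$-interior structure map, matches the ordinary product in $\OG e_\chi$ via the required orthogonality of conjugate idempotents, where the disjoint-composition-factor hypothesis enters to keep this compatible with the integral structure of $\ON e_\tau$.
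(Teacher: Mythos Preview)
Your map $\Phi$ is the right one, and over $K$ it is certainly an isomorphism, but there is a genuine gap at the integral level. You assert the decomposition $\OG e_\chi = \bigoplus_{z\in[G/N]}\OG\cdot{^z e_\tau}$ without justification, and this is precisely the heart of the matter. That decomposition is equivalent to the statement that $e_\tau\in\OG e_\chi$; a priori $e_\tau$ lies only in $KN$, and there is no reason why projecting $\OG$ onto $\OG e_\chi$ should produce the individual pieces $\OG\cdot{^z e_\tau}$ rather than only their sum. The very first test element already exposes this: $\Phi(1\otimes e_\tau\otimes 1)=e_\tau e_\chi=e_\tau$, so if $e_\tau\notin\OG e_\chi$ then $\Phi$ does not even land in $\OG e_\chi$, let alone surject onto it. Your closing sentence concedes that the disjoint-composition-factor hypothesis ``enters to keep this compatible with the integral structure'', but nowhere in the argument is it actually invoked; as written, the proof would go through verbatim without that hypothesis, which cannot be right.

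The paper supplies exactly this missing step. One fixes a primitive idempotent decomposition $I$ of $1$ in $\ON$. For $i\in I$, the condition $e_\tau i\neq 0$ is equivalent to the unique simple quotient of $\ON i$ occurring as a composition factor of $k\tenO V$. The disjointness hypothesis then forces $e_{^x\tau}\,i=0$ for every $x\in G\setminus N$, so that $e_\chi i=\bigl(\sum_x e_{^x\tau}\bigr)i=e_\tau i$. Since $i\in\ON\subseteq\OG$, the left side lies in $\OG e_\chi$; summing over all such $i$ gives $e_\tau=\sum_i e_\tau i\in\OG e_\chi$. Once this is in hand, your decomposition is valid, $\Phi$ is well-defined into $\OG e_\chi$, and the remainder of your argument (rank count, surjectivity, and the consequences for vertices and anchors via $e_\chi=\Tr^G_N(e_\tau)$) goes through; the paper instead appeals to \cite[(16.6)]{T} at that point.
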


\begin{proof}
It suffices to show that $e_\tau$ belongs to $\OG e_\chi$. Indeed, if 
this is true, then the assumptions on $\chi$ and $\tau$ imply that 
$e_\chi=$ $\Tr^G_N(e_\tau)$, and the different conjugates of $e_\tau$ 
appearing in $\Tr^G_N(e_\tau)$ are pairwise orthogonal idempotents in 
$\OG e_\chi$. In particular, we have $e_\tau\OG e_\tau=$ $\ON e_\tau$. 
It follows from  \cite[(16.6)]{T} that $\OG e_\chi\cong$ 
$\Ind^G_N(\ON e_\tau)$. It remains to show that $e_\tau$ belongs to 
$\OG e_\chi$. Let $I$ be a primitive decomposition of $1$ in $\ON$. 
Let $i\in$ $I$ such that $e_\tau i\neq$ $0$. This condition is 
equivalent to $k\tenO V$ having a composition factor isomorphic to the 
unique simple quotient $T_i$ of the $\ON$-module $\ON i$. Since the 
different $G$-conjugates of the $kN$-module $k\tenO V$ have pairwise 
disjoint composition series, it follows that $e_{^x\tau}i=$ $0$ for 
$x\in$ $G\setminus N$. Thus $e_\chi i=$ $e_\tau i\in$ $\OG e_\chi$ for 
any $i\in$ $I$ such that $e_\tau i\neq$ $0$. Taking the sum over all 
such $i$ implies that $e_\tau\in$ $\OG e_\chi$. The last statement 
follows from the fact that $e_\chi=$ $\Tr^G_N(e_\tau)$ and Higman's
criterion, for instance, or directly from the fact that $\Ind^G_N$
induces a Morita equivalence between $\ON e_\tau$ and $\OG e_\chi$.
\end{proof}

\begin{cor} \label{GN2cor}
Let $G$ be a finite group, $N$ a normal subgroup of $p$-power index, 
and let $\chi\in$ $\Irr(G)$ such that $\chi=$ $\Ind^G_N(\tau)$ for 
some $\tau\in$ $\Irr(N)$. Suppose that $d_{\chi, \varphi}$ is either 
$1$ or $0$ for every $\varphi\in$ $\IBr(G)$. Then $N$ contains the 
anchors of $\chi$.
\end{cor}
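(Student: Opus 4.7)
The plan is to apply Proposition~\ref{GN2pro}: for any $\ON$-lattice $V$ affording $\tau$ and any transversal $\CR$ of $G/N$ in $G$, I verify that the composition series of the $kN$-modules $k\tenO {^xV}$, $x\in\CR$, are pairwise disjoint. Set $L=\Ind^G_N(V)$, which is an $\OG$-lattice affording $\chi$. Since $N$ is normal in $G$, Mackey's formula gives
$$\Res^G_N(k\tenO L)\;=\;\bigoplus_{x\in\CR}{^x(k\tenO V)},$$
and hence the multiplicity of any $\psi\in\IBr(N)$ in $\Res^G_N(k\tenO L)$ equals $\sum_{x\in\CR}d_{\tau,{^{x^{-1}}\psi}}$.

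The key step is to compute this same multiplicity in a second way using $\chi^\circ=\sum_\varphi d_{\chi,\varphi}\,\varphi$. Since $G/N$ is a $p$-group, standard modular Clifford theory for normal subgroups of $p$-power index yields: each $\psi\in\IBr(N)$ extends uniquely to a Brauer character $\tilde\psi$ of its inertia subgroup $T_\psi$; the set $\IBr(G|\psi)=\{\varphi_\psi\}$ is a singleton with $\varphi_\psi=\Ind^G_{T_\psi}(\tilde\psi)$; and a short application of Mackey's formula to $\Res^G_N\Ind^G_{T_\psi}(\tilde\psi)$ shows that $\psi$ appears in $\Res^G_N(\varphi_\psi)$ with multiplicity exactly $1$. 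Combining these facts, the multiplicity of $\psi$ in $\Res^G_N(k\tenO L)$ equals $d_{\chi,\varphi_\psi}$, yielding the identity
$$d_{\chi,\varphi_\psi}\;=\;\sum_{x\in\CR}d_{\tau,{^{x^{-1}}\psi}}.$$

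By hypothesis the left side lies in $\{0,1\}$, so at most one term on the right is nonzero. Thus each $\psi\in\IBr(N)$ appears as a composition factor of at most one of the modules $k\tenO{^xV}$ (and with multiplicity one there), which is precisely the pairwise disjointness needed for Proposition~\ref{GN2pro}; that proposition then delivers the conclusion.

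The main obstacle is the Clifford-theoretic input: that $\IBr(G|\psi)$ is a singleton with restriction multiplicity one whenever $G/N$ is a $p$-group. This rests on the unique extendability of $\psi$ to $T_\psi$, which in turn follows from the vanishing of $H^i(T_\psi/N,k^\times)$ for $i=1,2$ (the group $T_\psi/N$ is a finite $p$-group and $k^\times$ is $p$-divisible under the splitting-field hypothesis). Once these standard facts are invoked, the remainder is straightforward bookkeeping of decomposition numbers.
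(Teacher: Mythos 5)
Your argument is correct, and it reaches the conclusion by a genuinely different route from the paper, although both proofs feed the same disjointness condition into Proposition~\ref{GN2pro} and both ultimately rest on the identity $d_{\chi,\varphi_\psi}=\sum_{x\in\CR} d_{{}^x\tau,\psi}$ for $\psi\in\IBr(N)$. You obtain this identity from modular Clifford theory for normal subgroups of $p$-power index (uniqueness of $\varphi_\psi\in\IBr(G)$ over $\psi$ and multiplicity-one restriction, via unique extendability of $\psi$ to its inertia subgroup) combined with Mackey decomposition. The paper instead takes a primitive decomposition $I$ of $1$ in $\ON$, observes that by Green's indecomposability theorem each $i\in I$ remains primitive in $\OG$, and then uses the formula $d_{\chi,\varphi}=\chi(i)$ recorded in Section~\ref{OrderSection} together with $\chi(i)=\sum_{x\in\CR}{}^x\tau(i)$: since each ${}^x\tau(i)$ is a nonnegative decomposition number of a conjugate of $\tau$ and the left-hand side is $0$ or $1$, at most one summand is nonzero, which is exactly the disjointness of composition series. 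The paper's route is therefore more self-contained: it never needs to know that $\IBr(G|\psi)$ is a singleton, nor that $\Res^G_N\varphi_\psi$ is multiplicity free, facts your proof must import. These are standard (essentially Green's theorem on Brauer characters over normal subgroups of $p$-power index), but note that your cohomological justification tacitly requires $k^\times$ to be uniquely $p$-divisible, i.e.\ $k$ perfect, which is not literally implied by the splitting-field hypothesis; citing the standard extension theorem sidesteps this minor point. What your approach buys in exchange is a more explicit bookkeeping: it identifies precisely which irreducible Brauer constituent $\varphi_\psi$ of $\chi$ detects each $\psi\in\IBr(N)$, information the idempotent computation leaves implicit.
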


\begin{proof}
Let $V$ be an $\ON$-lattice affording $\tau$. Let $I$ be a primitive 
decomposition of $1$ in $\ON$. By Green's indecomposability theorem, 
$I$ remains a primitive decomposition in $\OG$. Let $i\in$ $I$.
We have
$$\chi(i) = \sum_x\ {^x\tau(i)}$$
where $x$ runs over a set of representatives $\CR$ of $G/N$ in $G$.
By the assumptions on the decomposition numbers of $\chi$,
the left side is either $1$ or $0$. Thus
either ${^x\tau(i)}=$ $0$ for all $x\in$ $\CR$, or there is
exactly one $x=x(i)\in$ $\CR$ with ${^x\tau(i)}\neq$ $0$.
This implies that the composition series of the different
$G$-conjugates of $k\tenO V$ are pairwise disjoint. The
result follows from \ref{GN2pro}.
\end{proof}

\begin{pro} \label{inflated}
Let $G$ be a finite group, $N$ a normal subgroup of $G$, and
$\chi\in$ $\Irr(G)$. Suppose that $\chi$ is the inflation to $G$ of 
an irreducible character $\psi\in$ $\Irr(G/N)$. 
Let $P$ be an anchor of $\chi$. Then $PN/N$ is an anchor 
of $\psi$, and $P \cap N$ is a Sylow $p$-subgroup of $N$. 
\end{pro}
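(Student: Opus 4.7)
The plan is to identify $\OG e_\chi$ with $\mathcal{O}[G/N] e_\psi$ as $G$-interior $\CO$-algebras (with $G$ acting on the right-hand side through the quotient $G \to G/N$), and then read off both conclusions from a single trace comparison. To set this up, I would first observe that since $\chi$ is the inflation of $\psi$, we have $N \subseteq \ker(\chi)$, so every simple $KG$-module affording $\chi$ is fixed by $N$; using $KG e_\chi \cong \End_K(M_\chi)$, this forces $ne_\chi = e_\chi$ for all $n \in N$. Hence the canonical surjection $\OG \to \OG e_\chi$, $x \mapsto xe_\chi$, factors through $\mathcal{O}[G/N]$. A direct computation with the explicit formulas for the central idempotents shows that $e_\psi$ maps to $e_\chi$, giving a surjective $G$-interior $\CO$-algebra map $\mathcal{O}[G/N] e_\psi \to \OG e_\chi$; both sides are $\CO$-free of rank $\psi(1)^2 = \chi(1)^2$, so this map is an isomorphism.

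The key technical step is the trace comparison. For any subgroup $H \leq G$, the $H$-action on $\mathcal{O}[G/N] e_\psi$ factors through $HN/N$, so under the above isomorphism $(\OG e_\chi)^H$ is identified with $(\mathcal{O}[G/N] e_\psi)^{HN/N}$. Choosing a left transversal for $H$ in $G$ and pushing it down to $G/N$, each coset of $HN/N$ in $G/N$ is hit exactly $[N : H \cap N]$ times; using $HN/N$-invariance of $c$ to collapse these terms, I obtain
\[
\Tr^G_H(c) \;=\; [N : H \cap N]\cdot \Tr^{G/N}_{HN/N}(c)
\]
inside $\mathcal{O}[G/N] e_\psi$ for every $c \in (\OG e_\chi)^H$.

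Applying this with $H = P$, the equation $e_\chi = \Tr^G_P(c)$ becomes $e_\psi = [N : P \cap N]\cdot \Tr^{G/N}_{PN/N}(c')$ in $\mathcal{O}[G/N] e_\psi$. If $P \cap N$ were not a Sylow $p$-subgroup of $N$, then $[N : P \cap N]$ would lie in $J(\CO)$, forcing $e_\psi \in J(\mathcal{O}[G/N] e_\psi)$, impossible since $e_\psi$ is the unit element. This proves $P \cap N \in \Syl_p(N)$, and hence $[N : P \cap N] \in \CO^\times$, so $e_\psi \in \Tr^{G/N}_{PN/N}((\mathcal{O}[G/N] e_\psi)^{PN/N})$. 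In particular $PN/N$ contains an anchor $\bar Q$ of $\psi$.

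For the minimality needed to conclude $\bar Q = PN/N$, I would lift $\bar Q$ to the $p$-subgroup $Q_0 = \{g \in P : gN \in \bar Q\}$ of $P$, which satisfies $Q_0 N/N = \bar Q$ and $Q_0 \cap N = P \cap N \in \Syl_p(N)$. Running the trace identity in reverse then yields $e_\chi \in \Tr^G_{Q_0}((\OG e_\chi)^{Q_0})$, so $Q_0$ contains an anchor of $\chi$; by $G$-conjugacy (hence equal order) of all anchors of $\chi$ together with $Q_0 \leq P$, this forces $Q_0 = P$ and therefore $\bar Q = PN/N$. The main obstacle is essentially bookkeeping: making the trace identity precise, and in particular recognising that both conclusions of the proposition are controlled by the single factor $[N : P \cap N]$; once that identity is in place, the rest is a short deduction.
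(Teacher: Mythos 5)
Your proposal is correct and follows essentially the same route as the paper: the paper likewise identifies $\OG e_\chi$ with $\CO[G/N]e_\psi$ (with $N$ acting trivially) and uses the relation $\Tr^G_P(d)=|PN:P|\,\Tr^G_{PN}(d)$, which is exactly your trace identity with the factor $[N:P\cap N]$, to get both the Sylow statement and that $PN/N$ contains an anchor of $\psi$. Your final step (lifting the anchor $\bar Q$ of $\psi$ to $Q_0\leq P$ and forcing $Q_0=P$) is just a reorganisation of the paper's observation that subgroups of $PN/N$ correspond to subgroups of $P$ containing $P\cap N$.
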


\begin{proof}
Let $d\in$ $(\OG e_\chi)^P$ such that $\Tr^G_P(d)=$ $e_\chi$.
The assumptions imply that the canonical map $G\to$ $G/N$ induces
a $G$-algebra isomorphism $\OG e_\chi\cong \CO G/N e_\psi$
such that $N$ acts trivially on both algebras. Thus $e_\chi=$
$\Tr^G_P(d)=$ $|PN:P|\Tr^G_{PN}(d)$. This implies that $P$ is a
Sylow $p$-subgroup of $PN$, and hence that $P\cap N$ is a 
Sylow $p$-subgroup of $N$. Since the isomorphism
$\OG e_\chi\cong \CO G/N e_\psi$ sends $\Tr^G_{PN}(d)$ to 
$\Tr^{G/N}_{PN/N}(\bar d)$, where $\bar d$ is the canonical image
of $d$, it follows that $PN/N$ contains an anchor of $\psi$.
Using the fact that $P$ is a Sylow $p$-subgroup of $PN$, one
easily checks that any proper subgroup of $PN/N$ is of the form 
$QN/N$ for some proper subgroup $Q$ of $P$ containing $P\cap N$. 
The previous isomorphism implies that $PN/N$ is an anchor of $\psi$.
\end{proof}

\begin{exa}
(1) $p=2$, $G = \mathfrak{S}_3$, $\chi(1) = 2$: Then $\chi$ lies in 
a defect zero  block of $G$, hence by Theorem~\ref{anchors-omnibus1}, 
the trivial group is the only anchor of $\chi $. 

(2) $p=2$, $G = \mathfrak{S}_4$, $\chi(1) = 2$: Then $\chi$ is inflated 
from the character in part (1). In this case, by Proposition 
\ref{inflated}  the Klein four subgroup $V_4$ of $\mathfrak{S}_4$ is 
the only anchor of $\chi$. However, the defect groups of the block 
containing $\chi$ (i.e. of the principal block of $\mathfrak{S}_4$)
are the Sylow 2-subgroups of $\mathfrak{S}_4$ (i.e. dihedral groups 
of order 8). 

(3) $p=2$, $G = \mathfrak{S}_5$: All irreducible characters of $G$ 
except the one of degree 6 are of height zero in their block. So
their anchors coincide with their defect groups, by Theorem
\ref{anchors-omnibus2} (a) .\\
Now let $\chi \in \Irr(G)$ with $\chi(1) = 6$. Then $\chi$ is induced 
from an irreducible character of the alternating group 
$\mathfrak{A}_5$ of degree $3$. Thus there exists an 
$\OG$-lattice affording $\chi$ with vertex $V_4$. \\
On the other hand, $\chi$ is labelled by the partition 
$\lambda = (3,1,1)$ of $5$. By the remark on p. 511 of \cite{W}, the 
Specht module $S^\lambda$ is indecomposable, and 
$\mathfrak{S}_2 \times \mathfrak{S}_2$ is a vertex of $S^\lambda$, by 
Theorem 2 in \cite{W}. Thus the anchors of $\chi$ are Sylow 
$2$-subgroups of $G$, by Theorem \ref{anchors-omnibus1} (b).

\end{exa}

\section{Navarro vertices} \label{NavarroSection}

We prove  Theorems \ref{contain} and \ref{iscontained}.

\begin{thm}  
Let $G$ be a finite $p$-solvable group. Let $\chi \in \Irr(G)$ and let   
$(Q,\delta)$ be a Navarro vertex of $\chi$. 
Supppose that $ \chi^{\circ} \in \IBr(G)$. Then $Q$ contains an
anchor of $\chi$. Moreover, if $\delta= 1_Q$ or if $p$ is odd, 
then $Q$ is an anchor of $\chi $.
\end{thm}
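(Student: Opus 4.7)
The plan is to identify the anchors of $\chi$ with the Green vertices of the simple $kG$-module with Brauer character $\chi^{\circ}$, and then invoke the known comparison between Green vertices and Navarro vertices in the $p$-solvable case from~\cite{N},~\cite{C}, and~\cite{CL}.

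Set $\varphi=\chi^{\circ}$ and let $S$ be the simple $kG$-module with Brauer character $\varphi$. Since $\chi^{\circ}=\varphi$, the only non-zero decomposition number of $\chi$ is $d_{\chi,\varphi}=1$, and so any $\OG$-lattice $L$ affording $\chi$ reduces to a $kG$-module $k\tenO L$ which has Brauer character $\varphi$ and $k$-dimension $\varphi(1)$; hence $k\tenO L\cong S$. By Theorem~\ref{anchors-omnibus2}(a), such an $L$ is unique up to isomorphism, the canonical map $\OG e_{\chi}\to\End_{\CO}(L)$ is an isomorphism of $G$-interior $\CO$-algebras, and the anchors of $\chi$ are precisely the vertices of $L$. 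Since $\End_{\OG}(L)=(\OG e_{\chi})^G\cong\CO$ and $\End_{kG}(S)\cong k$, a standard application of Higman's criterion---lifting an endomorphism $\psi\in\End_{kQ}(S)$ with $\Tr^G_Q(\psi)=\id_S$ to an endomorphism $\tilde\psi\in\End_{\OQ}(L)$, noting that $\Tr^G_Q(\tilde\psi)$ lies in $\CO\cdot\id_L$ and is congruent to $\id_L$ modulo $J(\CO)$, hence differs from $\id_L$ by a unit scalar---shows that a vertex of $L$ coincides with a vertex of $S$. Consequently, the anchors of $\chi$ are exactly the Green vertices of $S$.

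It remains to compare a vertex of $S$ with the Navarro vertex $Q$ of $\chi$. Since $\chi$ is a Fong--Swan lift of $\varphi$, this is precisely the content of the Navarro-vertex theory: Navarro's construction of $(Q,\delta)$ from $\chi$ via special characters in~\cite{N}, together with the analysis of Cossey~\cite{C} and Cossey--Lewis~\cite{CL}, establishes that any Green vertex of the simple $kG$-module with Brauer character $\varphi$ is contained in $Q$, with equality whenever $\delta=1_Q$ or $p$ is odd. Combined with the previous paragraph this yields both statements of the theorem.

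The principal obstacle is the second step: unwinding the inductive construction of $(Q,\delta)$ along the $p$-solvable chain of $G$ and tracking how Green vertices of simple $kG$-modules transform under the associated Clifford correspondences, so as to see in particular why the parity and triviality hypotheses suffice for equality. Once this comparison is imported from the cited references, the remainder of the argument is the short reduction carried out above.
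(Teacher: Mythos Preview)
Your argument has a genuine gap at the step where you claim that a vertex of the $\CO G$-lattice $L$ coincides with a vertex of the simple $kG$-module $S=k\tenO L$. The ``lifting'' you propose does not work: while $\End_{\CO}(L)\to\End_k(S)$ is surjective, there is no reason the induced map $\End_{\OQ}(L)\to\End_{kQ}(S)$ on $Q$-fixed points is surjective, so an endomorphism $\psi\in\End_{kQ}(S)$ with $\Tr^G_Q(\psi)=\id_S$ need not lift to any $\tilde\psi\in\End_{\OQ}(L)$. Indeed the conclusion is false in general. Example~\ref{biglift} of the paper (with $p=2$ and $G=GL(2,3)$) exhibits a $2$-special character $\chi$ with $\chi^\circ\in\IBr(G)$ for which the vertex of $L$ is a full Sylow $2$-subgroup of $G$, while the vertex of $k\tenO L$ is the strictly smaller quaternion subgroup of $SL(2,3)$. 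Since the anchor of $\chi$ equals the vertex of $L$ and not that of $S$, knowing only that a Green vertex of $S$ lies inside the Navarro vertex $Q$ does not show that an anchor of $\chi$ lies inside $Q$; your route to the first assertion therefore breaks down, and the equality clause cannot be salvaged this way either.

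The paper's proof avoids passing to $k$ and works with the nucleus $(W,\gamma)$ of $\chi$ directly. One has $\chi=\Ind^G_W(\gamma)$ with $Q\in\Syl_p(W)$, and the unique $\CO W$-lattice affording $\gamma$ has a vertex $R\leq Q$; since its induction to $G$ is the unique lattice $L$ affording $\chi$, $R$ is also a vertex of $L$ and hence an anchor of $\chi$, giving the containment. For the equality clause one writes $\gamma=\alpha\beta$ with $\alpha$ $p'$-special and $\beta$ $p$-special; the key point is that $\beta^\circ\in\IBr(W)$ forces $\beta$ to be linear when $p$ is odd (by \cite[Lemma~2.1]{N11}), and likewise when $\delta=\Res^W_Q(\beta)=1_Q$, after which a degree count yields $|R|=|Q|$.
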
 

\begin{proof}   
Since  $\chi^\circ \in \IBr(G)$, there is a unique $\OG$-lattice $L$ 
affording $\chi$, up to isomorphism. Moreover, $k \tenO L$ is the 
unique simple $kG$-module with Brauer character $\chi^\circ$, up to  
isomorphism. Recall that there is a nucleus $(W,\gamma)$ of 
$\chi$ such that $\chi = \Ind_W^G(\gamma)$, and $Q \in 
\Syl_p(W)$ (cf. \cite[p. 2763]{N}).  Further, $\gamma \in
\Irr(W)$ has a unique factorization $\gamma = \alpha \beta$ where 
$\alpha \in \Irr(W)$ is $p'$-special and $\beta \in \Irr(W)$  is   
$p$-special. 
Going over to Brauer characters, we have $\chi^\circ = 
\Ind_W^G(\gamma^\circ)$ and $\gamma^\circ = \alpha^\circ \beta^\circ$; 
in particular, $\gamma^\circ, \alpha^\circ, \beta^\circ \in \IBr(W)$.
Let $ R$ be a vertex of the unique $\mathcal{O}W$-lattice affording 
$\gamma$ and let  $R_0$ be a vertex of the unique $kW$-module affording 
$\gamma^{\circ}$. Then, up to conjugation in $W$,  
$R_0 \leq R \leq Q$. Since $\chi = \Ind_W^G(\gamma)$, $R$  is  
also a vertex of the $\OG$-lattice affording $\chi$ and hence 
by  Proposition~\ref{anchorscontainvertices} (iii), $R$ is an anchor of 
$\chi $. This proves the first assertion.

Since $\alpha $ is $p'$-special,  the $p$-part of the degree of 
$\gamma $ equals the $p$-part of the degree of $\beta $. Since $G$ is 
$p$-solvable, it follows that   
$$ |R_0|  = \frac{|W|_p}{{\beta}(1)_p}. $$    

Now suppose that $p$ is odd. Since  $\beta^{\circ} $ is irreducible, 
by \cite[Lemma 2.1]{N11}, $\beta$ is linear. It follows from 
the above that $R_0=Q = R$, proving the second assertion when $p$ is 
odd. Since $\delta = \Res^W_Q (\beta)$, a similar argument works 
when $\delta=1_Q $. 
\end{proof} 

\begin{lem}  \label{ples-special}  
Let $G$ be a finite $p$-solvable group and $\chi\in\Irr(G)$. Suppose
that $\chi$ is $p$-special and that there exists $\varphi\in$ $\IBr(G)$ 
of $p'$-degree such that $d_{\chi\varphi} \ne 0$. Then there exists an 
$\OG$-lattice affording $\chi$ with vertex a Sylow $p$-subgroup of $G$.  
\end{lem}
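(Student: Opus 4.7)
The plan is to reduce the lemma directly to Proposition \ref{pro:ples}, which already asserts exactly the desired conclusion provided that (a) $\Res^G_P(\chi)$ is irreducible, where $P$ is a Sylow $p$-subgroup of $G$, and (b) there is some $\varphi\in\IBr(G)$ of $p'$-degree with $d_{\chi,\varphi}\ne 0$. Condition (b) is part of our hypothesis, so the only thing to verify is condition (a): that the restriction of a $p$-special character of a $p$-solvable group to a Sylow $p$-subgroup is irreducible.

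This is a classical theorem of Gajendragadkar on $\pi$-special characters in $\pi$-separable groups: if $G$ is $p$-solvable and $\chi\in\Irr(G)$ is $p$-special, then $\Res^G_H(\chi)$ is irreducible for any Hall $p$-subgroup $H$ of $G$. In our situation, a Hall $p$-subgroup is the same as a Sylow $p$-subgroup $P$, so Gajendragadkar's theorem gives that $\Res^G_P(\chi)\in\Irr(P)$. Plugging this into Proposition \ref{pro:ples} together with the given $\varphi$ produces an $\OG$-lattice affording $\chi$ whose vertex is $P$, which is what we want.

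There is no substantial obstacle: the content of the lemma lies entirely in invoking the two named ingredients, with Gajendragadkar's restriction theorem being the bridge between the hypothesis that $\chi$ is $p$-special and the geometric input (irreducible restriction to a Sylow) required by Proposition \ref{pro:ples}. The real work has already been done in the proof of Proposition \ref{pro:ples}, which builds the explicit lattice with Sylow vertex via the Plesken-style argument from \cite[Lemma 3]{P}.
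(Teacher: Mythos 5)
Your proposal is correct and follows the paper's own argument exactly: invoke Gajendragadkar's theorem (cited in the paper as \cite[Prop.~6.1]{G}) to get that $\Res^G_P(\chi)$ is irreducible for $P$ a Sylow $p$-subgroup, and then apply Proposition \ref{pro:ples} with the given $\varphi$ of $p'$-degree. Nothing is missing.
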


\begin{proof} 
This is immediate from Proposition \ref{pro:ples} and the fact that the 
restriction of a $p$-special character  of $G$ to  a Sylow $p$-subgroup 
of $G$ is irreducible (cf. \cite[Prop.~6.1]{G}).
\end{proof}

The following is due to G.~Navarro.

\begin{lem} \label{odddecomposition} 
Let $G$ be a finite group of odd order and $\chi\in\Irr(G)$.
Suppose that $\chi$ is $p$-special. Then the trivial Brauer character 
of $G$ is a constituent of $\chi^{\circ} $.   
\end{lem}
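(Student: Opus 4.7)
The approach is induction on $|G|$. Since $G$ is of odd order it is solvable by Feit--Thompson, hence $p$-solvable, and throughout I can invoke the Isaacs--Gajendragadkar theory of $p$-special (that is, $B_p$-) characters. The base cases are immediate: if $p\nmid|G|$, then $\chi(1)$ and $o(\chi)$ are both $p$-powers dividing $|G|$, forcing $\chi=1_G$ and $\chi^\circ=1_G^\circ$; if $G$ is a $p$-group, then $G^\circ=\{1\}$ and $\chi^\circ=\chi(1)\cdot 1_G^\circ$.

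For the inductive step I carry out three successive reductions. First, $\chi|_{O_{p'}(G)}$ is a sum of $p$-special characters of a $p'$-group, hence trivial, so $\chi$ is inflated from a $p$-special character of $G/O_{p'}(G)$; since every IBr of $G$ with $O_{p'}(G)$ in its kernel pulls back from an IBr of $G/O_{p'}(G)$ and the trivial IBr corresponds to the trivial IBr, I may assume $O_{p'}(G)=1$, whence $P:=O_p(G)\neq 1$. Second, let $\lambda$ be an irreducible constituent of $\chi|_P$ with inertia subgroup $T:=I_G(\lambda)$: if $T<G$, the Clifford correspondent $\tilde\chi\in B_p(T|\lambda)$ is again $p$-special and satisfies $\chi=\Ind_T^G(\tilde\chi)$; by induction $d_{\tilde\chi,1_T}>0$, and since the trivial $kG$-module appears as a quotient of the permutation module $k[G/T]=\Ind_T^G(k)$ by Frobenius reciprocity, expansion of $\chi^\circ=\Ind_T^G(\tilde\chi^\circ)$ into IBrs of $G$ yields $d_{\chi,1_G}\ge d_{\tilde\chi,1_T}>0$. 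So I may assume $\chi|_P=e\lambda$ with $\lambda$ $G$-invariant; and reducing by inflation from $G/P$ (if $\lambda=1_P$) or from $G/\ker\lambda$ (if $\ker\lambda\neq 1$) closes the remaining subcases, leaving $\lambda$ faithful on $P$.

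In this final case $Z:=Z(P)$ is cyclic because $P$ admits a faithful irreducible, the central character $\mu:=\lambda|_Z/\lambda(1)$ is a faithful $G$-invariant linear character of $p$-power order, and $\mu$ being faithful forces $G$ to act trivially on $Z$, so $Z\le Z(G)$. The plan is to twist $\chi$ by an extension of $\mu^{-1}$ of $p$-power order: given a linear character $\tilde\mu\in\hat G$ of $p$-power order with $\tilde\mu|_Z=\mu$, the character $\chi':=\chi\otimes\tilde\mu^{-1}$ is still $p$-special, satisfies $Z\le\ker\chi'$, and is inflated from $G/Z$; induction on $|G/Z|<|G|$ then gives $d_{\chi',1_G}>0$. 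Finally, since $\tilde\mu$ has $p$-power order, $\tilde\mu(g)=1$ for every $g\in G^\circ$ (the value $\tilde\mu(g)$ is simultaneously a $p^k$-th root of unity and a root of unity of order dividing the $p'$-number $o(g)$), so $\tilde\mu^\circ=1_G^\circ$ and $(\chi')^\circ=\chi^\circ$, completing the proof.

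The main obstacle is securing the extension $\tilde\mu$: since $\mu$ extends to a linear character of $G$ exactly when $\mu|_{Z\cap[G,G]}=1$, direct extension can fail --- for instance when $P$ is the extraspecial group $p^{1+2}$, so that $Z=P'\le[G,G]$ and the faithful $\mu$ does not extend. It is precisely at this step that the odd-order hypothesis must be used in a nontrivial way, via the Glauberman--Isaacs correspondence (whose sign-freeness for odd order allows one to bypass the multiplicative extension argument and instead verify the positivity $d_{\chi,1_G}=\langle\chi|_W,1_W\rangle>0$ on a Hall $p'$-complement $W$ of $G$, using the canonical $B_p$-extension formulae).
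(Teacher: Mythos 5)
There is a genuine gap, and you concede it yourself: the entire inductive scaffolding (factoring out $O_{p'}(G)$, Clifford correspondence over $O_p(G)$, reduction to a faithful $G$-invariant $\lambda$) funnels everything into one hard case, and in that case your intended argument --- twisting by a $p$-power-order linear extension $\tilde\mu$ of $\mu$ --- fails exactly as you observe (e.g.\ when $O_p(G)$ is extraspecial, so that $Z(P)=P'\leq [G,G]$ and $\mu$ does not extend). At that point you appeal to ``the Glauberman--Isaacs correspondence'' and ``canonical $B_p$-extension formulae'' to assert the positivity $\langle \Res^G_W\chi,\,1_W\rangle>0$, but no argument is given, and it is not clear how the Glauberman correspondence (which matches characters of $G$ with characters of fixed-point subgroups under coprime actions) would produce this inner product estimate. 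Since this positivity \emph{is} the content of the lemma (via the identity $d_{\chi,1_G}=\langle\chi,\Ind_W^G 1_W\rangle=\langle\Res^G_W\chi,1_W\rangle$, where $\Ind_W^G 1_W$ is the character of the projective cover of the trivial module), the proof is incomplete where it matters most.

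Ironically, the identity you write down in your last sentence is precisely the pivot of the actual proof, which needs no induction at all. By Gajendragadkar's theorem, a $p$-special character has values in $\mathbb{Q}[\zeta]$ for $\zeta$ a primitive $|G|_p$-th root of unity; hence $\Res^G_W\chi$, being a character of the $p'$-group $W$ with values in $\mathbb{Q}[\zeta]$, is rational-valued. It has odd degree, so the non-real irreducible constituents of $\Res^G_W\chi$ pair off with their conjugates and some constituent must be real-valued; by Brauer's permutation lemma the number of real-valued irreducible characters of $W$ equals the number of real classes, which for $|W|$ odd is one, so that constituent is $1_W$. This gives $\langle\Res^G_W\chi,1_W\rangle>0$ directly, and Frobenius reciprocity finishes the argument. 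If you want to salvage your write-up, discard the induction and supply this rationality-plus-Brauer-permutation-lemma step in place of the final paragraph.
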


\begin{proof}   
By the Feit-Thompson theorem, $G$ is solvable and hence $p$-solvable. 
Let $H$ be a $p$-complement of $G$ and let $\zeta $ be a primitive 
$|G|_p$-th root of unity. 
By \cite[Theorem~6.5]{G}, ${\mathbb Q}[\zeta]$  is a splitting field 
of $\chi$. Thus $\Res^G_H (\chi )$ is  a  rational valued  character 
of odd degree. Hence, $\Res^G_H (\chi)$ contains a real valued  
irreducible constituent, say $\alpha$. By Brauer's permutation lemma,   
the number of real-valued irreducible characters of $H$ equals the 
number of real conjugacy classes of $H$. Since $|H|$ is odd, $\alpha$  
is the trivial character of $H$. By Frobenius reciprocity, $\chi$ is 
an irreducible constituent of $\Ind_H^G (\alpha) $. On the other hand, 
since $H$ is a $p$-complement of $G$, $\Ind_H^G (\alpha)$ is the   
character of the projective indecomposable $\OG$-module corresponding 
to the trivial $kG$-module.   
\end{proof}

Combining the two results above yields the Theorem \ref{iscontained}. 
In fact we prove more.

\begin{thm} 
Let $G$ be a finite group of odd order, let $\chi \in \Irr(G)$ and let 
$(Q,\delta )$ be a Navarro vertex of $\chi$. Then there exists an 
$\OG$-lattice affording $\chi$ with vertex $Q$. In particular, $Q$ is 
contained in an anchor of $\chi $.
\end{thm}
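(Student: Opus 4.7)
The plan is to construct an indecomposable $\CO W$-lattice $V$ affording $\gamma$ with vertex $Q$, where $(W,\gamma)$ is a nucleus of $\chi$, and then induce to $G$. By the Feit--Thompson theorem, $G$ is solvable and hence $p$-solvable, so the nucleus machinery applies: $\chi=\Ind_W^G(\gamma)$ with $\gamma=\alpha\beta$ for a $p'$-special character $\alpha$ and a $p$-special character $\beta$ of $W$; moreover $Q\in\Syl_p(W)$ and $\delta=\Res^W_Q(\beta)$.

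Since $W\leq G$ has odd order, Lemma \ref{odddecomposition} applied to the $p$-special character $\beta$ of $W$ gives that $1_W$ is an irreducible constituent of $\beta^\circ$. Consequently the hypotheses of Lemma \ref{ples-special} are satisfied by $\beta$ with $\varphi=1_W$ (which has $p'$-degree), yielding a $\CO W$-lattice $M$ that affords $\beta$ and has vertex $Q$. Let $U$ be any $\CO W$-lattice affording $\alpha$, and set $V=M\tenO U$ with the diagonal $W$-action, so that $V$ affords $\alpha\beta=\gamma$. Since $\gamma\in\Irr(W)$, a Krull--Schmidt decomposition of $V$ can have only one non-zero summand, so $V$ is indecomposable; $V$ is also $Q$-projective because $M$ is.

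The crucial step is to show that the vertex of $V$ equals $Q$. The maps $\CO\to\End_\CO(U)$, $c\mapsto c\cdot\id_U$, and $\mathrm{tr}\colon\End_\CO(U)\to\CO$ are both $W$-linear, and their composition is multiplication by $\dim_\CO(U)=\alpha(1)$. Because $\alpha$ is $p'$-special, $\alpha(1)$ is coprime to $p$ and hence a unit in $\CO$, so $\CO$ is a direct $\CO W$-summand of $U\tenO U^*\cong\End_\CO(U)$. Tensoring with $M$ on the left, $M$ appears as a direct $\CO W$-summand of $V\tenO U^*$. Were the vertex of $V$ a proper subgroup $R$ of $Q$, then $V\tenO U^*$ would be $R$-projective and so would $M$, contradicting the fact that $M$ has vertex $Q$.

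Finally, put $L=\Ind_W^G(V)$. Then $L$ affords $\chi=\Ind_W^G(\gamma)$, and since $\chi\in\Irr(G)$, $L$ is indecomposable. By Green's theorem on vertices of indecomposable induced modules, the vertex of $L$ is a $G$-conjugate of the vertex of $V$, namely of $Q$. Thus $L$ is an $\CO G$-lattice affording $\chi$ whose vertex is $Q$ up to conjugacy, and Proposition \ref{anchorscontainvertices}(ii) then places $Q$ inside an anchor of $\chi$. The main obstacle is the vertex computation for $V$: ruling out a vertex drop below $Q$ requires precisely the $p'$-degree of $\alpha$, which makes the trace pairing on $U\tenO U^*$ split off a copy of $\CO$ and thereby allows $M$ to be recovered as a summand of $V\tenO U^*$.
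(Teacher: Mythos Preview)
Your proof is correct and follows essentially the same route as the paper: both use Lemma~\ref{odddecomposition} to feed Lemma~\ref{ples-special}, tensor the resulting lattice for $\beta$ against one for $\alpha$, exploit the $p'$-degree of $\alpha$ to split off the trivial module from $U\tenO U^*$ and thereby pin the vertex of $V$ at $Q$, and then induce to $G$. The only cosmetic difference is in the final step: you invoke the standard fact that an indecomposable induced module $\Ind_W^G(V)$ has the same vertex as $V$ (up to $G$-conjugacy), whereas the paper writes out the underlying Mackey argument explicitly; these are the same computation.
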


\begin{proof}   
Let $(W, \gamma)$ be a nucleus of $\chi$ such that $Q$ is a Sylow 
$p$-subgroup of $W$ and $\delta = \Res^W_Q (\alpha) $, where 
$\gamma= \alpha \beta$, with $\alpha$ a $p'$-special  character and  
$\beta$ a $p$-special character of $W$ (cf. \cite[Sections 2,3]{N}).
By Lemma ~\ref{odddecomposition}, the trivial Brauer character is a 
constituent  of $ \beta^{\circ} $. By Lemma~\ref{ples-special}, 
there exists an $\CO W$-lattice  $X$ affording $\beta$ and with 
vertex $Q$. Let $Y$ be an $\CO W$-lattice affording $\alpha $.    
Then $V=Y \otimes X $ is an $\CO W$-lattice  affording $\gamma $. 
We claim that $V$ has vertex $ Q$. Indeed if $V$ is relatively 
$R$-projective, then every  indecomposable sumand of $Y^* \otimes V$  
is relatively $R$-projective. On the other hand, since  $\alpha$ has 
$p'$-degree, the $\CO W$-lattice $Y^* \otimes Y$ has a direct summand
isomorphic to the trivial  $\CO W$-module.   
Thus, $Y^* \otimes  Y \otimes X$ has a  direct summand isomorphic 
to $X$. Since $Q$ is a vertex of $X$, and $Q$ is a Sylow 
$p$-subgroup of $W$,  it follows that $R$ is a Sylow $p$-subgroup 
of  $W$. Then $\Ind_W ^G (V)$ is an $\OG$-lattice with character 
$\chi = \Ind_W ^G (\gamma)$. Clearly, $\Ind_W ^G (V)$ is relatively 
$\OQ$-projective.  Suppose if possible that $\Ind_W ^G (V)$ is 
relatively $\OR$-projective for some proper subgroup $R$ of $Q$, say 
$\Ind_W ^G (V)$  is a summand of $\Ind_R^G(X)$ for some $R$ 
properly contained in $Q$ and  for some  $\OR$-lattice $X$. By the 
Mackey formula, $V$ is a summand of 
$\Ind_{W \cap \,^xR  }^ W (\Res^{\,^xR}_{ W\cap \,^xR }  \, ^xX)$  
for some $x \in G$. This is a contradiction as $|\, ^x R| < |Q|$.  
Thus $Q$ is a vertex of $\Ind_W ^G (V)$  proving the first assertion. 
The second is immediate from the first and  Proposition \ref{pro:ples}.
\end{proof}

I. M. Isaacs and G. Navarro provided us with an example of a 
$p$-special character of a $p$-solvable group  none of whose 
irreducible Brauer constituents have degree prime to $p$. Proposition 
\ref{GN2pro}  can be used to prove that the  anchors  of the  
Isaacs-Navarro example, which we give below, are strictly contained in 
the Sylow $p$-subgroups of the ambient group  (so in particular, 
these characters  are not afforded by any lattice with full vertex). 
 
\begin{exa} \label{Navvertex-big}[Isaacs -Navarro]  
Let $p=5 $ and let $M$ be the semidirect product of an 
extraspecial group of order $5^3$ and of exponent $5$, acted on
faithfully by $Q_8 $ where the action is trivial on the center.  
Let $G =M \wr C_5$ be the wreath product of  $M$ by a
cyclic group of order $5$. In $G$, there is the normal subgroup
$N = M_1  \times   \cdots  \times  M_5 $,
with each  $M_i $  isomorphic to  $M$. Also, there is a cyclic 
subgroup  $C$ of order  $5$ that permutes the  $M_i $ transitively.
Note that $M_1$ has a  $5$-special character $\alpha$ of degree  $5$.

Let $\theta \in \Irr(N) $ be the product of $\alpha$
with trivial characters of $M_2$, $ M_3$, $M_4$  and $M_5$.  
Then $\theta $  has degree  $5$ and
$\chi = \theta^G $ is $5$-special of degree  $25$.

There is a Sylow $2$-subgroup $S$ of $G$ with the
form  $ Q_1 \times Q_2 \times ... \times Q_5$, where $Q_i$ is a
Sylow  $2$-subgroup of  $M_i$ and the $Q_i$ are
permuted transitively by  $C$.
Now $\theta_S  $ is the product of  $\alpha_{Q_1} $
with trivial characters on the other  $Q_i$.

Also, $\alpha_{Q_1} $  is the sum of the irreducible character of 
degree $2 $ and the three nontrivial linear characters, so there is 
no trivial constituent. It follows that $\theta_S$ has no  
$C$-invariant irreducible constituent.  The same is therefore
true about $\chi_S$. Then each $5$-Brauer irreducible constituent 
of $\chi$ has degree divisible by  $5$. 

The construction also  shows that if $x \in G \setminus N$,  then 
$\theta $ and $\,^x \theta $ have no irreducible Brauer constituents 
in common.  So by Proposition \ref{GN2pro}, the  anchors of $\chi$ are 
contained in $N$. 
\end{exa} 

In conjunction with Proposition \ref{GN1pro}, the following example
provides characters whose anchors are not contained in Navarro 
vertices.  The  construction is similar to that in the Isaacs-Navarro 
example above.
 
\begin{exa}   \label{Navvertexsmall}
Suppose that  $M =O_{p,p'} (M) $, and $ \alpha $  is an irreducible 
$p$-special character of $M$ such that   $\Res^M _{O_{p}(M)}\alpha $ 
is irreducible. Suppose further that   there exists a  nontrivial 
irreducible Brauer  character $\varphi$ of $ M$ such that 
$d_{\alpha, \varphi} \ne 0 $.   Let  $\beta$ be  the  irreducible 
character of $M$ with $O_p (M) $ in  the kernel of $\beta$ and  such 
that  $\beta^{\circ} $ equals the dual $\varphi^* $ of $\varphi $.   
Then $ \beta $ is $p'$-special.
   
Let $ G = M \wr C_p $.  In $G$, there is the normal subgroup
$N= M_1 \times \cdots \times  M_p  $
with each $M_i$ isomorphic to  $M$. Let $\tilde \alpha  \in \Irr(N)$ 
be the product  of $\alpha$ and the trivial characters of 
$M_2, \cdots, M_p $, let  $\tilde \beta \in \Irr(N) $ be the product  
of $\beta $ and the trivial characters of $M_2, \cdots, M_p $  and 
let $\tilde \varphi $ be the product of $\varphi$ with the trivial 
Brauer characters of $M_2, \cdots, M_p $.  Let 
$\chi= \Ind_N^G (\tilde \alpha \tilde \beta ) $.

Since $\tilde \alpha$ is $p$-special  and $\tilde \beta$ is 
$p'$-special, by results of \cite{G}, $\tilde \alpha  \tilde \beta $ 
is an irreducible character of $N$.  By construction, neither 
$\tilde\alpha $ nor $\tilde \beta $ is $G$-stable. Hence, also by 
general results on $p$-factorable characters, 
$\tilde\alpha\tilde \beta$ is not $G$-stable.    
Since $|G/N|=p $, it follows  that  $\chi$ is an irreducible character 
of $G$. Now, since $\tilde \beta$ is not $G$-stable, it is easy to see 
that $\chi $ is not $p$-factorable. On the other hand,  $N$ is a 
maximal  normal subgroup of  $G$. Thus 
$(N, \tilde \alpha\tilde \beta  )$ is a nucleus of $G$  in the sense 
of \cite{N}, and  the Sylow $p$-subgroups of $N$ are the  first 
components of the Navarro vertices of $\chi $.      

We have $(\tilde \alpha \tilde \beta  )^{\circ}=$
$\tilde \alpha ^{\circ}\tilde \beta ^{\circ} $, and $\tilde \varphi $ 
is an irreducible Brauer constituent  of $\tilde\alpha $ and 
$\tilde \beta^{\circ} = \tilde \varphi^{*} $. Since  
$\tilde\varphi (1)= \varphi(1) $ is relatively prime to $p$, it follows 
that  the trivial Brauer character of $N$ is a constituent of 
$(\tilde \alpha \tilde \beta )^{\circ} $.
Consequently, the trivial Brauer character of $G$ occurs as a 
constituent  of $\chi $.   Thus, by Proposition \ref{GN1pro}, the 
anchors of $\chi $ are not contained in $N$.
\end{exa}

\section{Lifting} \label{LiftingSection}

Let $G$ be a finite group and $\chi\in$ $\Irr(G)$. Let $P$ be an anchor 
of $\chi$. Then $k\tenO\OG e_\chi$ is a $G$-interior $k$-algebra. Since 
$$(k \tenO \OG e_\chi)^G = Z(k \tenO \OG e_\chi)$$
is a local $k$-algebra, it follows that $k \tenO \OG e_\chi$ is a 
primitive $G$-interior $k$-algebra. Since
$$k \tenO (\OG e_\chi)^P \subseteq (k \tenO \OG e_\chi)^P,$$
$k \tenO \OG e_\chi$ has a defect group $Q$ contained in $P$. 
We will see below that we often (but not always) have equality here.
If $\chi^\circ \in \IBr(G)$, then there is, up to isomorphism, a unique 
$\OG$-lattice $L$ affording $\chi$, and $k \tenO L$ is the unique 
simple $kG$-module with Brauer character $\chi^\circ$, up to 
isomorphism. We have seen above that in that case the $G$-interior 
$\CO$-algebra $\OG e_\chi$ is isomorphic to $\End_\CO(L)$. This
implies that the $G$-interior $k$-algebra 
$k \tenO \OG e_\chi$ is isomorphic to $\End_k(k \tenO L)$. Thus the
anchor $P$ of $\chi$ is a vertex of $L$, and the defect group $Q$ 
of $k\tenO \OG e_\chi$ is a vertex of $k \tenO L$. The examples
\ref{samelift} and \ref{biglift} below illustrate the cases where
$Q=P$ and $Q < P$, respectively.

\begin{exa} \label{samelift}
Let $G$ be the symmetric group $\mathfrak{S}_n$, for a positive integer 
$n$. Let $\chi \in \Irr(G)$ such that $\chi^\circ \in \IBr(G)$, and let 
$L$ be an $\OG$-lattice affording $\chi$. We claim that $L$ and 
$k \tenO L$ have the same vertices.

Indeed, let $\lambda$ be the partition of $n$ labelling $\chi$. Since 
the Specht lattice $S_\CO^\lambda$ is an $\OG$-lattice affording 
$\chi$, the uniqueness of $L$ implies that $S_\CO^\lambda \cong L$. 
Thus the $kG$-module $S_k^\lambda \cong k \tenO S_\CO^\lambda 
\cong k \tenO L$ has Brauer
character $\chi^\circ$ and is therefore simple.

A result by Hemmer (cf. \cite{H}) implies that $S_k^\lambda$ lifts to 
a $p$-permutation $\OG$-lattice $M$. Then $K \tenO M$ is a simple 
$KG$-module; that is, $K \tenO M \cong$ 
$S_K^\mu \cong K \tenO S_\CO^\mu$ for some partition $\mu$ of $n$. 
Moreover, $S_k^\mu$ is isomorphic to $k \tenO M \cong$ $S_k^\lambda$; 
in particular, we have
$$\Hom_{kG}(S_k^\lambda,S_k^\mu) \neq 0 \neq 
\Hom_{kG}(S_k^\mu, S_k^\lambda).$$
Suppose first that $p>2$. Then \cite[Proposition 13.17]{J} implies 
that $\lambda \ge \mu$ and $\mu \ge \lambda$, hence 
$\mu = \lambda$. The uniqueness of $L$ implies that 
$S_\CO^\lambda \cong L \cong M$; in particular, $L$ is a
$p$-permutation $\OG$-lattice. Hence $L$ and $k \tenO L$ have the same 
vertices.

It remains to consider the case $p=2$. In this case a theorem by James 
and Mathas (cf. \cite{JM}) implies that either $\lambda$
is $2$-regular, or the conjugate partition $\lambda'$ is $2$-regular, 
or $n=4$ and $\lambda = (2,2)$. The last alternative is
trivial. Multiplying by the sign character, if necessary, we may 
therefore assume that $\lambda$ is $2$-regular. If $\mu$ is
also $2$-regular then we certainly have $\lambda = \mu$. If $\mu'$ is 
$2$-regular then we have $\lambda = \mu'$, in a similar
way. Now, arguing as in the case $p>2$, we conclude that $L$ and 
$k \tenO L$ have the same vertices.
\end{exa}

\begin{exa} \label{biglift}  
Let $p=2 $, $G= GL(2, 3)$ and $N =SL(2,3) $. Let $R$ be the unique  
Sylow $2$-subgroup  of $N$ and $ H$ a complement of $R$ in $N$. Let 
$\tau$ be the  $2$-dimensional irreducible character of $R$ and let 
$\eta$ be the unique extension of $\tau$ to $N$ with determinantal 
order a power of $2$ (cf. Corollary (6.28) in \cite{I}). Let $\chi$ 
be an  extension of  $\eta $ to $G$. Then $\chi$ is $2$-special, by 
\cite[Proposition 40.5]{Hu}. Further,  $\chi^{\circ} $ is 
irreducible and  equals $\Ind_N^G (\psi )$, where $\psi$ is a linear 
Brauer character of $N$. (Note that  the restriction  of 
$\chi^{\circ} $ to $H$ is a sum of two distinct irreducible Brauer 
characters). 

Thus, $R$ is a vertex of the unique $kG$-module affording 
$\chi^{\circ}$ and $R$ is contained in some (and hence every) vertex 
of the  $\OG$-lattice  affording $\chi$. Since $\chi$ is not induced   
from any character of $N$, and $G/N$ is a $2$-group,   
Green's indecomposability theorem implies that the $\OG$-lattice 
affording  $\chi$ is not relatively $N$-projective.   
Hence $ R$ is properly contained in a vertex of the $\OG$-lattice 
affording $\chi $, which is consequently a Sylow $2$-subgroup of $G$.
\end{exa}

\begin{rem}  \label{non-endopermutation}   
Let $G$ be a finite $p$-solvable group and $\chi\in\Irr(G)$ such 
that $\chi^\circ \in \IBr(G)$. Let $L$ be an $\OG$-lattice 
affording  $\chi$. Suppose, as in the above example that a vertex 
$P$ of $L$ strictly contains a vertex $R$ of $k \tenO L$.    
Let $S$ be an $\OP$-lattice source of $L$.      
We claim that $S$ is not an endopermutation module. Indeed, assume 
the contrary. Since $P$ is a vertex of $S$ and since $S$ is 
endopermutation,  $k\tenO S$ is an indecomposable endopermutation 
$kP$-module  with vertex $P$. On the other hand, $k\tenO S$   
is a direct summand of $k\tenO L $, $k\tenO L$ has vertex $R$,  
and $R$ is strictly contained in $P$, a contradiction.
\end{rem}


\textbf{Acknowledgements.} 
The authors are grateful to Susanne Danz for her help with the example 
on symmetric groups.  They would also like to thank Gabriel Navarro for 
providing us with Lemma ~\ref{odddecomposition}  and  Marty Isaacs and 
Gabriel Navarro  for providing us with Example~\ref{Navvertex-big}. Work 
on this paper started when the second author visited the City University 
of London in 2013, with the kind support of the DFG, SPP 1388. He is 
grateful for the hospitality received at the City University.



\begin{thebibliography}{ABCD}

\bibitem{Bar} L. Barker, {\em Defects of Irreducible Characters
of $p$-Soluble Groups}, J. Algebra {\bf 202} (1998),  178--184.

\bibitem{C}
J.~P.~Cossey, {\em Vertex subgroups and vertex pairs in solvable groups},
in: Character theory of finite groups, pp. 17–32,
Contemp. Math., 524, Amer. Math. Soc., Providence, RI, 2010. 

\bibitem{CL} 
J.~P.~Cossey and M.~L.~Lewis, {\em Lifts and vertex pairs in 
solvable groups}, Proc. Edinb. Math. Soc. (2) {\bf 55} (2012), 143–153. 
 
\bibitem{CR}
C. W. Curtis and I. Reiner, Methods of representation theory, Vol I,
John Wiley and Sons, New York, 1981.

\bibitem{Ea}
C. W. Eaton, {\em Vertices for irreducible characters of a class of 
blocks},  J. Algebra {\bf 286} (2005), 492-499.


\bibitem{G} D. Gajendragadkar, {\em A class of characters of 
finite $\pi$-separable groups}, J. Algebra {\bf 59} (1979), 237--259.

\bibitem{Gr} J.~A.~Green, {\em Some remarks on defect groups},
Math. Z. {\bf 107} (1968), 133--150.

\bibitem{H}
D. J. Hemmer, {\em Irreducible Specht modules are signed Young modules},
J. Algebra {\bf 305} (2006), 433-441.

\bibitem{Hu}
B. Huppert, Character theory of finite groups, 
Walter de Gruyter, Berlin - New York, 1998.

\bibitem{I}
I. M. Isaacs, Character theory of finite groups,
Dover Publications, Inc., New York, 1976. 


\bibitem{J}
G. D. James, The representation theory of the symmetric groups,
Springer-Verlag, Berlin, 1978.

\bibitem{JM}
G. James and A. Mathas, {\em The irreducible Specht modules in 
characteristic $2$}, Bull. London Math. Soc. {\bf 31} (1999), 
457-462.

\bibitem{KKL} R. Kessar, S. Koshitani,  M. Linckelmann  {\em On  
symmetric quotients of symmetric algebras},  J. Algebra {\bf 442} 
(2015), 423--437.

\bibitem{K}
R. Kn\"orr, {\em On the vertices of irreducible modules},
Ann. of Math. (2) {\bf 110} (1979), 487-499.

\bibitem{Li}  M.\ Linckelmann, {\em The source algebras of
blocks with a Klein four defect group}, J.\ Algebra {\bf 167} (1994),
821--854.

\bibitem{Lisplendid} M. Linckelmann, {\em On splendid derived and 
stable equivalences between blocks of finite groups}, J. Algebra 
{\bf 242} (2001), 819--843.

\bibitem{N}
G. Navarro, {\em Vertices for characters of $p$-solvable groups}, 
Trans. Amer. Math. Soc. {\bf 354} (2002), 2759-2773.

\bibitem{N11}
G. Navarro, {\em Modularly irreducible characters and normal subgroups}, 
Osaka J. Math {\bf 48} (2011), 329-332.

\bibitem{P}
W. Plesken, {\em Vertices of irreducible lattices over $p$-groups},
Comm. Algebra {\bf 10} (1982), 227-236.

\bibitem{PiPu} C. Picaronny and L. Puig, {\em Quelques remarques sur un
th\`eme de Kn\"orr}, J. Algebra {\bf 109} (1987) 69--73.

\bibitem{Puigpointed} L. Puig, {\em Pointed groups and construction of
characters.} Math. Z. {\bf 176} (1981), 265--292.

\bibitem{Punil} L. Puig, {\em Nilpotent blocks and their source 
algebras}, Invent. Math. {\bf 93}  (1988), 77--116.

\bibitem{Puigbook} L. Puig, {\em On the local structure of Morita
and Rickard equivalences between Brauer blocks}, Progress in
Math. {\bf 178}, Birkh\"auser Verlag, Basel (1999)

\bibitem{RobLoc} G. R. Robinson, {\em Local structure, vertices, 
and Alperin's conjecture}, Proc. London Math. Soc. {\bf 72} (1996),
312--330.

\bibitem{Scottnotes} L. L. Scott, unpublished notes (1990).

\bibitem{T-DG} J. Th\'evenaz, {\em Duality in $G$-algebras},
Math. Z. {\bf 200} (1988), 47--85.
 
\bibitem{T}
J. Th\'evenaz, $G$-algebras and modular representation theory, 
Clarendon Press, Oxford, 1995.

\bibitem{W}
M. Wildon, {\em Two theorems on the vertices of Specht modules},
Arch. Math. {\bf 81} (2003), 505-511.
\end{thebibliography}
\end{document}